\newtheorem{theorem}{Theorem}[section]
\newtheorem{assumption}[theorem]{Assumption}
\newtheorem{corollary}[theorem]{Corollary}
\newtheorem{definition}[theorem]{Definition}
\newtheorem{example}[theorem]{Example}
\newtheorem{lemma}[theorem]{Lemma}
\newtheorem{remark}[theorem]{Remark}
\newenvironment{proof}[1][Proof]{\noindent\textit{#1.} }{\hfill \rule{0.5em}{0.5em}}
\numberwithin{equation}{section}
\newcommand{\dint}{\displaystyle\int}
\begin{document}

\title{\textbf{A system of state-dependent delay differential equation
modelling forest growth I: semiflow properties}}
\author{ \textsc{Pierre Magal and Zhengyang Zhang } \\
%EndAName
{\small Univ. Bordeaux, IMB, UMR 5251, F-33076 Bordeaux, France}\\
{\small CNRS, IMB, UMR 5251, F-33400 Talence, France}}
\maketitle

\textbf{Abstract:} {\small In this article we investigate the semiflow properties of a class of state-dependent delay differential equations which is motivated by some models  describing the dynamics of the number of adult trees in forests. We investigate the existence and uniqueness of a semiflow in the space of Lipschitz and $C^1$ weighted functions. We obtain a blow-up result when the time approaches the maximal time of existence. We conclude the paper with an application of a spatially structured forest model. }\newline
\noindent \textbf{Keywords:} State-dependent delay differential equations, forest population dynamics, semiflow, time of blow-up.  \newline
\textbf{AMS Subject Classication} :  	34K05,  	37L99,  37N25.

\section{Introduction}

Let $\Omega$  be a compact subset of $\mathbb{R}^n$ (with  $n\geqslant 1$). Denote for simplicity that $C(\Omega):=C(\Omega ,\mathbb{R})$ and $C_{+}(\Omega):=C(\Omega ,[0,+\infty ))$. In this article we consider the following class of state-dependent delay differential equation: $\forall t\geqslant 0$ and $\forall x \in \Omega$,
\begin{equation}  \label{EQ1.1}
\left\{ 
\begin{array}{l}
\partial _{t}A(t,x)=F(A(t,.),\tau (t,.),A(t-\tau (t))(.,.))(x), \vspace{0.1cm} \\ 
\displaystyle\int_{-\tau (t,x)}^{0}f(A(t+s,.) )(x)ds=\displaystyle\int_{-\tau _{0}(x)}^{0}f(\varphi (s,.))(x)ds,
\end{array}
\right.
\end{equation}
where $A(t-\tau(t)) \in C(\Omega ^{2})$ is the map defined by 
\begin{equation} \label{EQ1.2}
A(t-\tau(t))(x,y) :=  A(t-\tau(t,x),y) 
\end{equation}
with the initial condition 
$$
A(t,x)=\varphi (t,x),\forall t \leqslant 0,
$$
and the initial distribution $\varphi$ belongs to  
\begin{equation*}
\begin{aligned}
\mathrm{Lip}_{\alpha}:=\{ \phi \in C((-\infty ,0],C(\Omega)): t \rightarrow e^{-\alpha \vert t \vert }\phi (t,.) \text{ is bounded and} \\
\text{Lipschitz continuous from }(-\infty ,0] \text{ to } C(\Omega)\}.
\end{aligned}
\end{equation*}
Recall that the space $\mathrm{Lip}_{\alpha}$ is a Banach space endowed with the norm 
\begin{equation*}
\Vert \phi\Vert _{\mathrm{Lip}_{\alpha}}:=\Vert \phi _{\alpha}\Vert _{\infty}+\Vert \phi _{\alpha}\Vert _{\mathrm{Lip}}
\end{equation*}
with 
\begin{equation*}
\left\Vert \phi _{\alpha}\right\Vert _{\infty}:=\sup_{t\leqslant 0} \Vert \phi _{\alpha}(t,. )\Vert _{\infty} \text{ and } \left\Vert \phi _{\alpha}\right\Vert _{\mathrm{Lip}}:=\sup_{t,s\leqslant 0:t \neq s}\frac{\Vert \phi _{\alpha}(t,. )- \phi _{\alpha}(s,. )\Vert _{\infty}}{\vert t -s \vert}  
\end{equation*}
and  $\phi_{\alpha}: (-\infty ,0] \to C(\Omega)$ is defined by  
\begin{equation} \label{EQ1.3}
\phi_{\alpha}(t,x):=e^{-\alpha \vert t\vert }\phi (t,x),\forall t\in (-\infty ,0],\forall x\in\Omega .
\end{equation}
In the rest of the paper the product space $\mathrm{Lip}_{\alpha}\times C(\Omega)$ will be endowed with the usual product norm 
\begin{equation*}
\left\Vert (\phi ,r) \right\Vert _{\mathrm{Lip}_{\alpha}\times C(\Omega)}:=\Vert \phi \Vert _{\mathrm{Lip}_{\alpha}}+\Vert r\Vert _{\infty },\forall \phi \in \mathrm{Lip}_{\alpha},\forall r\in C(\Omega).
\end{equation*}

We will make the following assumptions throughout this paper. 
\begin{assumption}\label{ASS1.1}
We assume that the map $F: C(\Omega)^{2}\times C(\Omega ^{2}) \rightarrow C(\Omega)$ is Lipschitz continuous on bounded sets, that is to say that for each constant $M>0$, there exists a constant $L(M) >0$ satisfying 
\begin{equation*}
\left\Vert F(u,v,w) -F(\widehat{u},\widehat{v},\widehat{w})\right\Vert _{\infty} \leqslant L(M) \left[ \Vert u-\widehat{u}\Vert _{\infty}+\Vert v-\widehat{v}\Vert _{\infty}+\Vert w-\widehat{w}\Vert _{\infty}\right]
\end{equation*}
whenever $\Vert u-\widehat{u}\Vert _{\infty} \leqslant M$, $\Vert v-\widehat{v}\Vert _{\infty} \leqslant M$ and $\Vert w-\widehat{w}\Vert _{\infty}\leqslant M$. 

We also assume that the map $f:C(\Omega) \to C(\Omega) $ is Lipschitz continuous and there exists a real number $M>0$ such that 
$$
0 < f(\varphi)(x) \leqslant M, \forall x \in \Omega \text{ and } \forall \varphi \in C( \Omega),
$$
and $f$ is monotone non-increasing, that is to say that 
$$
\varphi(x) \leqslant \widehat{\varphi}(x),\forall x \in \Omega \Rightarrow f(\varphi)(x) \geqslant f(\widehat{\varphi})(x), \forall x \in \Omega.
$$
\end{assumption}

Examples of state-dependent delay differential equations of this form has been considered  first by Smith \cite{Smith1, Smith2, Smith3, Smith4}. This idea has been suscessfully used in \cite{Brunner, Kloosterman} (see also the references therein). Our motivation to consider such a class of state-dependent delay differential equations comes from forest modelling. In \cite{Magal-Zhang1, Magal-Zhang2} such state-dependent delay differential equations have been used to model the competition for light between trees.   
\begin{example}[Finite number of species] The $m$-species case corresponds to the case $n=1$ and the domain $\Omega$ contains exactly $m$ elements. We can choose for example  
$$
\Omega=\{1,2,...,m\}
$$
and for $x=1,\ldots,m$,
$$
F(A(t,.),\tau (t,.),A(t-\tau (t))(.,.))(x)=G(x,A(t,.),\tau (t,x),A(t-\tau (t,x))(.))
$$
where $G:\Omega \times \mathbb{R}^3 \rightarrow \mathbb{R}$ is a map (see \cite{Magal-Zhang1} for more details). 
\end{example}

\begin{example}[Spatially structured case] For the spatially structured case, we can choose 
$$
\Omega=[0,x_{\max}]\times[0,y_{\max}].
$$
Moreover assume (for simplicity) that we have a single species, then we can choose
\begin{equation*}
\begin{split}
F(A(t,.),\tau (t,.),A_1(t,.))(x,y) := &  e^{-\mu_{J} \tau(t,x,y)}\frac{f(A(t,x,y))}{f(A_1(t,x,y))}(I-\varepsilon \Delta)^{-1}[\beta A_1(t,.)](x,y) \\
& -\mu _{A}A(t,x,y),
\end{split}
\end{equation*}
where $\Delta$ is the Laplacian operator on the domain $\Omega$ with periodic boundary conditions. This model corresponds to the spatially structured model in \cite{Magal-Zhang1}.
\end{example}

Let $A\in C((-\infty ,r],C(\Omega))$ (for some $r\geqslant 0$) be given. Then for each $t\leqslant r$, we will use the standard notation $A_{t}\in C((-\infty ,0],C(\Omega))$, which is the map defined by 
\begin{equation*}
A_{t}(\theta ,.)=A(t+\theta ,.),\forall \theta \leqslant 0.
\end{equation*}

For clarity we will specify the notion of a solution.
\begin{definition}\label{DE1.4}
Let $r\in (0,+\infty ]$. A solution of the system (\ref{EQ1.1}) on $[0,r)$ is a pair of continuous maps $A:(-\infty ,r)\rightarrow C(\Omega)$ and $\tau :[0,r)\rightarrow C_{+}(\Omega)$ satisfying 
\begin{equation*}
A(t,x)=\left\{ 
\begin{array}{l}
\varphi (0,x)+\displaystyle\int_{0}^{t}F(A(l,.),\tau(l,.),A(l-\tau (l))(.,.))(x)dl,\forall t\in [0,r),\vspace{0.1cm} \\ 
\varphi (t,x),\forall t\leqslant 0,
\end{array}
\right.
\end{equation*}
and 
\begin{equation*}
\int_{t-\tau (t,x)}^{t}f(A(s,.))(x)ds=\int_{-\tau _{0}(x)}^{0}f(\varphi (s,.))(x)ds,\forall t\in [0,r).
\end{equation*}
\end{definition}

In this problem the initial distribution is $(\varphi(t,x),\tau_{0}(x))$. The semiflow generated by (\ref{EQ1.1}) is 
\begin{equation*}
\mathcal{U}(t)(\varphi(.,x),\tau _{0}(x)) :=(A_{t}(.,x),\tau(t,x)),
\end{equation*}
where $A(t,x)$ and $\tau (t,x)$ is the solution of (\ref{EQ1.1}) with the initial distribution $(\varphi(t,x),\tau _{0}(x))$.

In order to clarify the notion of semiflow in this context, we introduce the following definition.

\begin{definition}
\label{DE1.5}Let $(M,d)$ be a metric space. Let $\mathcal{U}:D_{\mathcal{U}}\subset [0,+\infty)\times M\rightarrow M$ be a map defined on the domain 
\begin{equation*}
D_{\mathcal{U}}:=\left\{(t,x)\in [0,+\infty)\times M:0\leqslant
t<T_{BU}(x)\right\} ,
\end{equation*}
where $T_{BU}:M\rightarrow (0,+\infty]$ is a lower semi-continous map (the blow-up time). We will use the notation 
\begin{equation*}
\mathcal{U}(t)x:=\mathcal{U}(t,x),\forall (t,x)\in D_{\mathcal{U}}.
\end{equation*}
We say that $\mathcal{U}$ is \textbf{a maximal semiflow on} $M$ if the following properties are satisfied:

\begin{enumerate}
\item[(i)] $T_{BU}(\mathcal{U}(t)x)+t=T_{BU}(x)$, $\forall x\in M$, $\forall t\in [0,T_{BU}(x))$;

\item[(ii)] $\mathcal{U}(0)x=x$, $\forall x\in M$;

\item[(iii)] $\mathcal{U}(t)\mathcal{U}(s)x =\mathcal{U}(t+s)x$, $\forall t,s\in [0,T_{BU}(x))$ with $t+s<T_{BU}(x)$;

\item[(iv)] If $T_{BU}(x)<+\infty$, then 
\begin{equation*}
\lim_{t\nearrow T_{BU}(x)}d(\mathcal{U}(t)x,\mathbf{0}_{M})=+\infty .
\end{equation*}
\end{enumerate}

We will say that the semiflow $\mathcal{U}$ is \textbf{state variable continuous} if for each $t\geqslant 0$ the map $x\mapsto \mathcal{U}(t)x$ is continuous around each point where $\mathcal{U}(t)$ is defined. We will say that the semiflow $\mathcal{U}$ is \textbf{locally uniformly state variable continuous} if for each $r\geqslant 0$, 
\begin{equation}  \label{EQ1.4}
\lim_{x \rightarrow x_0} \sup_{t\in [0,r]}d(\mathcal{U}(t)x,\mathcal{U}(t)x_{0})=0
\end{equation}
whenever the map $A(t)$ is defined at $x$ and $x_{0}$.

We will say that the semiflow $\mathcal{U}$ is \textbf{continuous} if the map $(t,x)\mapsto \mathcal{U}(t)x$ is continuous from $D_{\mathcal{U}}$ into $M$.
\end{definition}

Actually the semiflow of the state-dependent delay differential equation (\ref{EQ1.1}) is not always continuous in time. Assume for example that $F\equiv 1$ a constant function. We fix $\alpha =0$ and $\Omega =\{1\}$. Consider $A(t)$ the solution of (\ref{EQ1.1}) with the initial distribution 
\begin{equation*}
(\varphi ,\tau _{0})=(0_{\mathrm{Lip}_{\alpha}},\tau _{0}).
\end{equation*}
Then the solution $A(t)$ is defined by 
\begin{equation*}
A(t)=\left\{ 
\begin{array}{l}
t,\text{ if }t\geqslant 0, \\ 
0,\text{ if }t\leqslant 0.
\end{array}
\right.
\end{equation*}
Hence $t\mapsto A(t)$ is differentiable almost everywhere and 
\begin{equation*}
A^{\prime }(t)=\left\{ 
\begin{array}{l}
1,\text{ if }t>0, \\ 
0,\text{ if }t<0.
\end{array}
\right.
\end{equation*}
Therefore for each $\widehat{t}\geqslant 0$, 
\begin{equation*}
\lim_{t\rightarrow \widehat{t}}\left\Vert A_{t}-A_{\widehat{t}}\right\Vert _{\mathrm{Lip},(-\infty,0]}=\lim_{t\rightarrow \widehat{t}}\left\Vert A^{\prime }(t+.)-A^{\prime }(\widehat{t}+.)\right\Vert _{L^{\infty }\left(-\infty ,0\right)}=1.
\end{equation*}
Therefore due to the possible discontinuity of $A^{\prime }(t)$ at time $t=0$, the semiflow is not continuous in time.

The following theorem is the main result of this section.

\begin{theorem}\label{TH1.6} 
There exists a maximal semiflow $\mathcal{U}:D_{\mathcal{U}}\subset [0,+\infty )\times \mathrm{Lip}_{\alpha}\times C_{+}(\Omega) \rightarrow \mathrm{Lip}_{\alpha}\times C_{+}(\Omega)$ and its corresponding blow-up time $T_{BU}:\mathrm{Lip}_{\alpha}\times C_{+}(\Omega) \rightarrow (0,+\infty ]$ such that for each initial distribution $(\varphi ,\tau _{0}) \in \mathrm{Lip}_{\alpha}\times C_{+}(\Omega)$, there exists a unique solution $A:(-\infty ,T_{BU}(\varphi ,\tau _{0}))\rightarrow C_{+}(\Omega)$ and $\tau :[0,T_{BU}(\varphi ,\tau _{0}))\rightarrow C_{+}(\Omega)$ of (\ref{EQ1.1}) satisfying 
\begin{equation*}
\mathcal{U}(t)(\varphi ,\tau _{0})(x) =(A_{t}(.,x),\tau (t,x)),\forall t\in [0,T_{BU}(\varphi ,\tau_{0})), \forall x\in\Omega .
\end{equation*}
Moreover if $T_{BU}(\varphi ,\tau_{0})<+\infty$, then 
\begin{equation*}
\limsup\limits_{t\nearrow T_{BU}(W_{0})}\Vert A(t,.)\Vert _{\infty}=+\infty .
\end{equation*}
Furthermore the semiflow $\mathcal{U}$  has the following properties:

\begin{enumerate}
\item[(i)] The map $T_{BU}$ is lower semi-continuous and $D_{\mathcal{U}}$ is relatively open in $[0,+\infty) \times \mathrm{Lip}_{\alpha}\times C_{+}(\Omega)$.

\item[(ii)] The semiflow $\mathcal{U}$ is locally uniformly state variable continuous in $\mathrm{Lip}_{\alpha}\times C_{+}(\Omega)$.

\end{enumerate}
\end{theorem}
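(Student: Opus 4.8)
The plan is to obtain a local existence-and-uniqueness result by a contraction argument in which the delay is treated as a \emph{functional} of the unknown, and then to run the usual continuation machinery and read off the qualitative properties. Fix a datum $(\varphi,\tau_0)$ and a radius $R>0$. Given a candidate $A$ on $(-\infty,r]$ that equals $\varphi$ on $(-\infty,0]$, is Lipschitz in time and satisfies $\|A(t,.)\|_\infty\le R$, I first solve the second line of (\ref{EQ1.1}) for $\tau$. With $g(x):=\int_{-\tau_0(x)}^0 f(\varphi(s,.))(x)\,ds$, the map $\sigma\mapsto\int_{t-\sigma}^t f(A(s,.))(x)\,ds$ is continuous and strictly increasing because $f>0$, so the equation $\int_{t-\tau}^t f(A(s,.))(x)\,ds=g(x)$ has at most one root $\tau=\tau[A](t,x)$. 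Monotonicity of $f$ is what makes the delay finite and bounded: from $\|A(s,.)\|_\infty\le R$ one gets $A(s,.)\le R\mathbf 1$ pointwise, hence $f(A(s,.))(x)\ge\min_y f(R\mathbf 1)(y)=:c(R)>0$, and since $g(x)\le M\|\tau_0\|_\infty$ this forces $\tau[A]$ to be bounded by a constant depending only on $R$, $M$ and $\|\tau_0\|_\infty$. Differentiating the implicit relation and using that $f$ is Lipschitz gives continuity of $\tau[A]$ in $(t,x)$ and, crucially, the functional-Lipschitz estimate $\|\tau[A]-\tau[\widehat A]\|_\infty\le C(R)\,\|A-\widehat A\|_\infty$. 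This reduces (\ref{EQ1.1}) to the single fixed-point equation $A=\Phi(A)$, where $\Phi(A)$ is the integral formula of Definition \ref{DE1.4} with $\tau$ replaced by $\tau[A]$.

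The second step is to show that $\Phi$ is a contraction on a closed set of time-Lipschitz maps bounded by $R$ and agreeing with $\varphi$ on $(-\infty,0]$, for $r$ small. Because $\tau[A]$ is bounded, the lower limit $t-\tau[A](t,x)$ stays above a fixed negative time, so only a bounded piece of the history is ever used and the weight $e^{-\alpha|t|}$ is inactive on that range. The estimate that dictates the entire functional-analytic setting is the comparison of the delayed terms,
\begin{equation*}
\begin{split}
A(l-\tau[A](l,x),y)-\widehat A(l-\tau[\widehat A](l,x),y) ={} & \big[A(l-\tau[A](l,x),y)-A(l-\tau[\widehat A](l,x),y)\big]\\
&+\big[(A-\widehat A)(l-\tau[\widehat A](l,x),y)\big],
\end{split}
\end{equation*}
whose first bracket is at most $\|A\|_{\mathrm{Lip}}\,|\tau[A](l,x)-\tau[\widehat A](l,x)|\le\|A\|_{\mathrm{Lip}}\,C(R)\,\|A-\widehat A\|_\infty$. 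This is precisely why the time-Lipschitz constant must belong to the working norm. Combining this with the Lipschitz-on-bounded-sets property of $F$ from Assumption \ref{ASS1.1}, one bounds $\|\Phi(A)-\Phi(\widehat A)\|$ by $r\,L(M')\,(1+C(R)+\|A\|_{\mathrm{Lip}})\,\|A-\widehat A\|$, while $\partial_t\Phi(A)=F(\cdots)$ is bounded on the set, so $\Phi$ also preserves the ball and the time-Lipschitz bound. Shrinking $r$ gives a unique fixed point, i.e. a unique local solution, with $r$ depending only on $R$ and the data — exactly what the continuation step needs.

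Third, I define $T_{BU}(\varphi,\tau_0)$ as the supremum of existence times, glue the unique local solutions into a unique maximal solution, and verify the identities (i)--(iii) of Definition \ref{DE1.5} directly from uniqueness and the integral form. For the blow-up alternative, suppose $T_{BU}<\infty$ while $\limsup_{t\nearrow T_{BU}}\|A(t,.)\|_\infty=R<\infty$. Then $\|A(t,.)\|_\infty\le R'$ for some $R'$ on a left-neighbourhood of $T_{BU}$, so by the lower bound $c(R')$ established above the delay $\tau(t,.)$ remains bounded, $F(\cdots)$ remains bounded by Assumption \ref{ASS1.1}, and hence $A$ is uniformly Lipschitz in time up to $T_{BU}$. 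Since the local existence time depends only on such bounds, the solution extends past $T_{BU}$, contradicting maximality; this proves the $\limsup$ statement and property (iv).

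Finally, a Gronwall estimate comparing two maximal solutions with nearby data on a compact interval $[0,r]$, assembled from the same Lipschitz-on-bounded-sets bound for $F$ and the functional-Lipschitz bound for $\tau$, yields (\ref{EQ1.4}) and hence the locally uniform state-variable continuity in (ii); only state-variable, not joint, continuity can hold, since $A'$ may jump at $t=0$ as the example preceding the theorem shows. The same estimate, together with the uniform lower bound on the local existence time, shows that a solution started near $x_0$ survives nearly as long as the one started at $x_0$, which is lower semicontinuity of $T_{BU}$ and relative openness of $D_{\mathcal U}$, giving (i). The main obstacle throughout is the self-referential delay: producing the functional-Lipschitz dependence of $\tau[A]$ on $A$, and paying for the state-dependent delayed argument with the time-Lipschitz constant of $A$, is what forces the analysis into $\mathrm{Lip}_\alpha$ and constitutes the technical heart of the argument.
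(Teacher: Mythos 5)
Your proposal follows essentially the same route as the paper: the delay is treated as a functional of $A_t$ solved from the integral equation (Lemmas \ref{LE3.2}, \ref{LE3.6}), bounded via the positivity and monotonicity of $f$ (Lemma \ref{LE3.8}), shown to depend Lipschitz-continuously on $A$ (Lemma \ref{LE3.9}), and the local solution is produced by a contraction in a space of time-Lipschitz histories where the state-dependent delayed term is paid for with the Lipschitz constant of $A$ (Theorem \ref{TH4.4}, Lemma \ref{LE4.3}), followed by the standard continuation, blow-up and continuous-dependence arguments of Section 5. The only cosmetic differences are that the paper derives the sup-norm blow-up by contradiction with the $\mathrm{Lip}_{\alpha}$-norm blow-up rather than by direct extension, and proves continuous dependence by re-running the fixed-point argument along the reference trajectory rather than by a Gronwall inequality; both variants are sound.
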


In the sequel we will use the notation
\begin{equation*}
BUC_{\alpha }:=\left\{ \phi \in C((-\infty ,0],C(\Omega)):\phi _{\alpha}\in BUC((-\infty ,0],C(\Omega))\right\}
\end{equation*}
where 
$$
\phi _{\alpha}(t,x):=e^{-\alpha \vert t\vert }\phi (t,x)
$$ 
and $BUC((-\infty ,0],C(\Omega))$ denotes the space of bounded uniformly continuous maps from $(-\infty ,0]$ to $C(\Omega)$. The space $BUC_{\alpha }$ is again a Banach space endowed with the norm 
\begin{equation*}
\left\Vert \phi \right\Vert _{BUC_{\alpha }}=\sup_{t\leqslant 0} \Vert \phi_{\alpha}(t,. )\Vert _{\infty}.
\end{equation*}
We will also use the notation 
\begin{equation*}
\begin{array}{r}
BUC_{\alpha }^{1}:=\left\{ \phi \in C^{1}((-\infty ,0],C(\Omega)):\phi _{\alpha}\in BUC((-\infty ,0],C(\Omega))\right. \\ 
\left. \text{and }\partial _{t}\phi _{\alpha}\in BUC((-\infty ,0],C(\Omega))\right\}
\end{array}
\end{equation*}
and the space $BUC_{\alpha}^{1}$ is again a Banach space endowed with the norm 
\begin{eqnarray*}
\Vert \phi \Vert _{BUC_{\alpha }^{1}} :=\Vert \phi _{\alpha}\Vert _{\infty }+\Vert \partial _{t}\phi _{\alpha}\Vert _{\infty} =\Vert \phi _{\alpha}\Vert _{\infty }+\Vert \phi _{\alpha}\Vert _{\mathrm{Lip}}.
\end{eqnarray*}
Now we consider the subset of $BUC_{\alpha }^{1}\times C_{+}(\Omega)$ 
\begin{equation*}
D_{\alpha }:=\left\{(\phi ,\tau _{0}) \in BUC_{\alpha}^{1}\times C_{+}(\Omega):\phi ^{\prime }(0,x)=F(\phi (0,.),\tau_{0}(.),\phi (-\tau_{0}(.),.))(x),\forall x\in\Omega\right\}.
\end{equation*}
One can note that $D_{\alpha }$ is a closed subset of $BUC_{\alpha }^{1}\times C_{+}(\Omega)$. Therefore $D_{\alpha }$ is a complete metric space endowed with the distance 
\begin{equation*}
d_{D_{\alpha }}\left(( \phi ,\tau _{0}) ,( \widehat{\phi},\widehat{\tau}_{0}) \right) :=\Vert \phi -\widehat{\phi}\Vert _{BUC_{\alpha }^{1}}+\Vert \tau _{0}-\widehat{\tau}_{0}\Vert _{\infty }.
\end{equation*}
We also have 
\begin{equation*}
D_{\alpha }\subset BUC_{\alpha }^{1}\times C_{+}(\Omega)\subset \mathrm{Lip}_{\alpha}\times C_{+}(\Omega),
\end{equation*}
and the topology of $BUC_{\alpha }^{1}\times C_{+}(\Omega)$ and $\mathrm{Lip}_{\alpha}\times C_{+}(\Omega)$ coincide on $D_{\alpha }$. We have the following results.
\begin{theorem}\label{TH1.7} 
When $\tau_{0}$ is fixed, the subdomain $D_{\alpha}$ is dense in $BUC_{\alpha }\times C_{+}(\Omega)$, namely
\begin{equation*}
\overline{D}_{\alpha}^{BUC_{\alpha }\times C_{+}(\Omega)}=BUC_{\alpha }\times C_{+}(\Omega).
\end{equation*}
Moreover we have following properties:
\begin{enumerate}
\item[(i)] The subdomain $D_{\alpha }$ is positively invariant by the semiflow $\mathcal{U}$, that is to say that for each $(\varphi ,\tau_{0}) \in D_{\alpha }$, 
\begin{equation*}
\mathcal{U}(t)(\varphi ,\tau _{0}) \in D_{\alpha},\forall t\in [0,T_{BU}(\varphi ,\tau _{0})).
\end{equation*}
\item[(ii)] The semiflow $\mathcal{U}$ resticted to $D_{\alpha }$ is a continuous semiflow when $D_{\alpha }$ is endowed with the metric $d_{D_{\alpha }}$.
\end{enumerate}
\end{theorem}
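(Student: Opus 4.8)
The plan is to treat the three assertions in turn, keeping the genuinely new difficulty for the continuity statement (ii), since density and invariance are comparatively soft.

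For the density statement I would start from a target $(\phi,\tau_{0})\in BUC_{\alpha}\times C_{+}(\Omega)$, keep its second component $\tau_{0}$ fixed, and approximate $\phi$ in the $BUC_{\alpha}$-norm by functions $\psi$ with $(\psi,\tau_{0})\in D_{\alpha}$. First, mollifying $\phi_{\alpha}$ in the time variable shows that $BUC_{\alpha}^{1}$ is dense in $BUC_{\alpha}$, so it suffices to correct a given $\psi\in BUC_{\alpha}^{1}$ into $D_{\alpha}$ by an arbitrarily small perturbation. I would look for a correction of product form $\chi(t,x)=g(x)\zeta(t)$, with $\zeta\in C^{1}$ fixed, $\zeta(0)=0$, $\zeta'(0)=1$, $\mathrm{supp}\,\zeta\subset[-\delta,0]$ and $\|\zeta\|_{\infty}\leqslant c\delta$. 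Since $\chi(0,\cdot)\equiv 0$, the value $\psi(0,\cdot)$ is unchanged and the requirement $\psi+\chi\in D_{\alpha}$ reduces to the fixed-point equation in $C(\Omega)$
\begin{equation*}
g(x)=F\bigl(\psi(0,\cdot),\tau_{0},\psi(-\tau_{0},\cdot)+g(\cdot)\zeta(-\tau_{0}(\cdot))\bigr)(x)-\psi'(0,x).
\end{equation*}
By Assumption \ref{ASS1.1} the right-hand side is a contraction in $g\in C(\Omega)$ with ratio at most $L(M)\|\zeta\|_{\infty}\leqslant L(M)c\delta$, so for $\delta$ small there is a unique $g$ with $\|g\|_{\infty}$ bounded independently of $\delta$; then $\|\chi\|_{BUC_{\alpha}}\leqslant\|g\|_{\infty}\|\zeta\|_{\infty}\to 0$, which yields the approximation.

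For the positive invariance (i) I would fix $(\varphi,\tau_{0})\in D_{\alpha}$ with solution $(A,\tau)$ and check that $(A_{t},\tau(t,\cdot))\in D_{\alpha}$ for each $t$. On $(-\infty,0]$ the solution is $\varphi\in BUC_{\alpha}^{1}$; on $[0,t]$ Definition \ref{DE1.4} gives $\partial_{s}A(s,x)=F(A(s,\cdot),\tau(s,\cdot),A(s-\tau(s),\cdot))(x)$, which is uniformly continuous in $s$ because $A,\tau$ are continuous and $F$ is Lipschitz on bounded sets. The only real point is the junction $s=0$: the right derivative there is $F(\varphi(0,\cdot),\tau_{0},\varphi(-\tau_{0},\cdot))(x)$ while the left derivative is $\varphi'(0,x)$, and these agree exactly by the compatibility condition defining $D_{\alpha}$; hence $A$ is $C^{1}$ across the origin and $A_{t}\in BUC_{\alpha}^{1}$, the weight $e^{-\alpha|\cdot|}$ and the $BUC_{\alpha}^{1}$-regularity of $\varphi$ controlling the tail $\theta\to-\infty$. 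Finally the compatibility condition for the translate $(A_{t},\tau(t,\cdot))$ is just the differential equation at time $t$, hence automatic.

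For the continuity (ii) the key conceptual point is that the compatibility condition removes the derivative jump at the origin responsible for the failure of time-continuity in the example preceding the theorem, so on $D_{\alpha}$ the semiflow really is continuous in the stronger metric $d_{D_{\alpha}}$, which now also controls $\partial_{\theta}A_{t}$. I would establish joint continuity in two steps: continuity in the initial data uniformly on compact time intervals, and continuity in time uniformly in the data. The first step upgrades the sup-norm continuous dependence of Theorem \ref{TH1.6}(ii) to $BUC_{\alpha}^{1}$ by differentiating, writing the derivative as $F(A(s,\cdot),\tau(s,\cdot),A(s-\tau(s),\cdot))$, and closing a Grönwall estimate using the Lipschitz bound on $F$, the continuous dependence of $\tau$ on the data, and the uniform continuity of $A$ needed to control the composed term $A(s-\tau(s),\cdot)$; the second step is then immediate from the uniform continuity of $\partial_{s}A$ established in (i), together with the usual weighted estimate on translates. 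The hard part will be exactly this control of the derivative through the state-dependent delay: since $\partial_{s}A$ is $F$ evaluated at the delayed argument $A(s-\tau(s),\cdot)$, convergence in $BUC_{\alpha}^{1}$ forces one to handle simultaneously the implicitly-defined delay $\tau$ and the composition $A(s-\tau(s),\cdot)$ in the uniform norm, and it is the positivity and monotonicity of $f$ in Assumption \ref{ASS1.1}, together with the uniform continuity from (i), that make this term depend continuously on the data and on $t$.
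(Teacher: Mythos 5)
Your argument is correct, but for the density statement --- the only part of Theorem \ref{TH1.7} the paper actually proves in detail --- you take a genuinely different route. The paper works in the product space $\mathscr{X}=C(\Omega)\times BUC_{\alpha}$ with the Hille--Yosida operator $\mathscr{A}(0,\varphi)=(-\partial_{t}\varphi(0,\cdot),\partial_{t}\varphi)$, whose non-densely defined domain $\{0\}\times BUC_{\alpha}^{1}$ encodes membership in $BUC_{\alpha}^{1}$ and whose range condition encodes the compatibility constraint: it characterizes $(\varphi,\tau_{0})\in D_{\alpha}$ as $(\mathscr{A}+\mathscr{F})(0,\varphi)\in\overline{D(\mathscr{A})}$, solves the resolvent-type equation $(I-\lambda\mathscr{A}-\lambda\mathscr{F})(0,\varphi_{\lambda})=(0,\psi)$ by a contraction for small $\lambda$, and lets $\lambda\to 0^{+}$ using $\lambda(\lambda I-\mathscr{A})^{-1}\to I$ strongly on $\overline{D(\mathscr{A})}$. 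Your approach replaces this by an explicit two-step construction: mollify to land in $BUC_{\alpha}^{1}$, then restore the compatibility condition by a localized product-form bump $g(x)\zeta(t)$ whose amplitude $g$ is found by a contraction with ratio $L(M)\Vert\zeta\Vert_{\infty}=O(\delta)$, so that the correction is $O(\delta)$ in the $BUC_{\alpha}$ norm (though, necessarily, not in $BUC_{\alpha}^{1}$ --- which is all that is required). Both are sound; yours is more elementary and produces a completely explicit approximant, while the paper's fits the problem into the integrated-semigroup framework it relies on elsewhere (cf.\ the operator $\mathscr{A}$ and reference [8]) and would generalize to other compatibility conditions without redesigning the corrector. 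One small point worth making explicit in your version: the fixed-point map involves the two-variable argument $(x,y)\mapsto\psi(-\tau_{0}(x),y)+g(y)\zeta(-\tau_{0}(x))$ in $C(\Omega^{2})$, and you should note that the iterates of $g$ stay in a fixed ball so that a single Lipschitz constant $L(M)$ of Assumption \ref{ASS1.1} applies throughout.

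Concerning (i) and (ii): the paper supplies no proof of either (the time-continuity on $D_{\alpha}$ is explicitly ``left to the reader''), so there is nothing to compare against. Your outline of (i) is correct --- the compatibility condition is exactly what glues the left derivative $\varphi'(0,\cdot)$ to the right derivative $F(\varphi(0,\cdot),\tau_{0},\varphi(-\tau_{0}(\cdot),\cdot))$ at the junction, and the condition propagates because it is the equation itself at time $t$. Your treatment of (ii) correctly identifies the mechanism (the derivative $\partial_{s}A$ is $F$ composed with the delayed state, so sup-norm continuous dependence from Lemma \ref{LE5.1} plus the Lipschitz bound on $F$, Lemma \ref{LE3.9} for the delay, and the Lipschitz-in-time bound on $A$ upgrade to $BUC_{\alpha}^{1}$), but it remains a plan rather than a proof; to be fair, that matches the level of detail the paper itself offers for this part.
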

Particularly, from (ii), we know that we can choose two different state space for $A_t$ ($\mathrm{Lip}_{\alpha}$ or $BUC_{\alpha}^{1}$), but only in the case of $BUC_{\alpha}^{1}$ can we get a continuous (in time) semiflow. 

In system (\ref{EQ1.1}), we can see from the second equation that the delay $\tau (t,x)$ is a solution of an integral equation. In the following (Lemma \ref{LE3.2}) we will see that the delay $\tau (t,x)$ can be seen as the solution of a partial differential equation, too. In Lemma \ref{LE3.6}, we will see that the delay $\tau(t,x)$ can be also regarded as a functional of $A_{t}$ and $(\varphi ,\tau _{0})$, which shows that it is actually a state-dependent delay. Specifically speaking, let $\delta_0 \in C_+(\Omega)$ be fixed, then we can define the map $\tau:D(\tau)\subset \mathrm{Lip}_{\alpha} \rightarrow [0 , + \infty)$ as the solution of 
\begin{equation} \label{EQ1.5}
\int_{-\tau(\phi ,x)}^{0} f(\phi (s,.))(x)ds=\delta_0(x)
\end{equation} 
with 
\begin{equation*}
D(\tau) =\left\{ \phi  \in \mathrm{Lip}_{\alpha} :\delta_0(x)<\int_{-\infty }^{0}f(\phi (s,.))(x)ds, \forall x \in \Omega \right\} .
\end{equation*}
Then we will see that 
$$
\tau(A_t,x)=\tau(t,x),\forall t\geqslant 0,
$$
and the first equation in \eqref{EQ1.1} can be rewritten as  
\begin{equation*}  
\partial _{t}A(t,x)=F(A(t,.),\tau (t,.),A(t-\tau (A_t,.),.))(x),\forall t\geqslant 0.
\end{equation*}
State-dependent delay differential equations have been used in the study of population dynamics of species \cite{Aiello, Alomari, Hbid, Kloosterman}. We refer in addition to \cite{Arino, Hartung} and the references therein for a nice survey on this topic. Moreover, the semiflow properties of a general class of state-dependent delay differential equations have been recently studied by Walther \cite{Walther} in $D_{\alpha }$. 

As an illustration, let us consider for example the map $F:BUC_{\alpha }^{1}\rightarrow\mathbb{R}$ in system (\ref{EQ1.1}) defined by
\begin{equation*}
F(\varphi):=\varphi(-\tau(\varphi))
\end{equation*}
where $\tau(\varphi)$ is defined as above in \eqref{EQ1.5}. Assume in addition that $f$ is continuously differentiable, then by Lemma \ref{LE3.4}, the state-dependent delay $\tau:BUC_{\alpha} \rightarrow C(\Omega)$ is $C^1$. Then for $\varphi _{0}\in BUC_{\alpha }^{1}$, we have
\begin{eqnarray*}
F(\psi +\varphi _{0})-F(\varphi _{0}) & = & (\psi +\varphi _{0})(-\tau (\psi +\varphi _{0}))-\varphi _{0}(-\tau (\varphi _{0})) \\
& = & \psi (-\tau (\psi +\varphi _{0}))+\varphi _{0}(-\tau (\psi +\varphi _{0}))-\varphi _{0}(-\tau (\varphi _{0})),
\end{eqnarray*}
from which we deduce the derivative
\begin{equation*}
DF(\varphi _{0})\psi =\psi(-\tau (\varphi _{0}))+\varphi _{0}^{\prime}(-\tau (\varphi _{0}))\cdot \partial_{\varphi} \tau (\varphi _{0})\psi ,
\end{equation*}
which satisfies the assumption (E) in Walther \cite{Walther}. 

In this article, we consider the pair $(A_t, \tau(t,.))$ as the state variable, and in this case we can also apply the result by Walther in \cite{Walther} to the delay differential equation 
\begin{equation}  \label{EQ1.6}
\left\{ 
\begin{array}{l}
\partial _{t}A(t,x)=F(A(t,.),\tau (t,.),A(t-\tau (t))(.,.))(x), \vspace{0.1cm} \\ 
\partial _{t} \tau (t,x)=1-\dfrac{f(A(t,.) )(x)}{f(A(t-\tau(t,.),.))(x)}. 
\end{array}
\right.
\end{equation}
Nevertheless the existence of a maximal semiflow as well as the blow-up time has been considered by Walther \cite{Walther}.

The article is organized as follows. In section 2 we prove that $D_{\alpha}$ is dense in $BUC_{\alpha }\times C_{+}(\Omega)$. In section 3 we prove some results regarding the delay $\tau(t,x)$. In sections 4 and 5 we will investigate the uniqueness and local existence of solutions, and the properties of semiflows. In the last section of the article, we will illustrate our results by proving the global existence of solutions for a spatially structured forest model.

\section{Density of the domain}

In this preliminary section we will prove the first result of Theorem \ref{TH1.7}, namely the density of $D_{\alpha}$ in the space $BUC_{\alpha }\times C_{+}(\Omega)$. 

\begin{proof}
Fix $\tau _{0}\in C_{+}(\Omega)$. Consider the space 
$$
\mathscr{X}:=C(\Omega)\times BUC_{\alpha}
$$
which is a Banach space endowed with the usual product norm. Define $\mathscr{A}:D(\mathscr{A})\subset\mathscr{X}\rightarrow\mathscr{X}$ a linear operator by
\begin{equation*}
\mathscr{A}\begin{pmatrix} 0_{C(\Omega)} \\ \varphi \end{pmatrix}:=\begin{pmatrix} -\partial _{t}\varphi(0,.) \\ \partial _{t}\varphi  \end{pmatrix},\forall \begin{pmatrix} 0_{C(\Omega)} \\ \varphi  \end{pmatrix} \in D(\mathscr{A}),
\end{equation*}
with 
$$
D(\mathscr{A}):=\{0_{C(\Omega)}\}\times BUC_{\alpha}^{1}.
$$ 
Then it is not difficult to prove that  
\begin{equation} \label{EQ2.1}
\overline{D(\mathscr{A})}=\{0_{C(\Omega)}\}\times BUC_{\alpha}.
\end{equation}

Moreover, the linear operator $\mathscr{A}$ is a Hille-Yosida operator (see \cite{Liu}). More precisely, we have $(0,\infty )\subset \rho (\mathscr{A})$ and for each $\lambda \in (0,\infty )$,
\begin{eqnarray*}
& & ( \lambda I-\mathscr{A}) ^{-1} 
\begin{pmatrix}
\alpha \\ 
\varphi
\end{pmatrix}
=\begin{pmatrix}
0_{C(\Omega)} \\ 
\psi
\end{pmatrix} \\ 
& \Leftrightarrow & \psi (\theta ,x)=\frac{1}{\lambda }e^{\lambda \theta }[\alpha +\varphi (0,x)] +\int _{\theta }^{0}e^{\lambda (\theta -l)}\varphi (l,x) dl.
\end{eqnarray*}
The linear operator $\mathscr{A}$ is Hille-Yosida since we have the following estimation 
\begin{equation} \label{EQ2.2}
\left\Vert (\lambda I-\mathscr{A}) ^{-n}\right\Vert _{\mathcal{L}(\mathscr{X}) }\leqslant \frac{1}{\lambda ^{n}},\forall n\geqslant 1,\forall \lambda >0.
\end{equation}
By using \eqref{EQ2.1} and \eqref{EQ2.2} it follows that 
\begin{equation} \label{EQ2.3}
\lim_{\lambda \to +\infty} \left\Vert \lambda ( \lambda I-\mathscr{A}) ^{-1} \begin{pmatrix} 0_{C(\Omega)} \\ \psi \end{pmatrix}-\begin{pmatrix} 0_{C(\Omega)} \\ \psi \end{pmatrix} \right\Vert_{\mathscr{X}}=0, \forall \psi \in BUC_{\alpha}.
\end{equation}
We define the nonlinear map $\mathscr{F}:\overline{D(\mathscr{A})}\rightarrow\mathscr{X}$,
\begin{equation*}
\mathscr{F}\begin{pmatrix} 0_{C(\Omega)} \\ \varphi \end{pmatrix}:=\begin{pmatrix} F(\varphi(0,.),\tau _{0}(.),\varphi(-\tau _{0}(.),.)) \\ 0_{BUC_{\alpha}} \end{pmatrix},\forall  \varphi \in BUC_\alpha. 
\end{equation*}
We observe that 
\begin{eqnarray*}
& & (\varphi ,\tau _{0}) \in D_{\alpha} \\
& \Leftrightarrow & (\mathscr{A}+\mathscr{F})\begin{pmatrix} 0_{C(\Omega)} \\ \varphi \end{pmatrix}\in \overline{D(\mathscr{A})} \text{ with }\begin{pmatrix} 0_{C(\Omega)} \\ \varphi \end{pmatrix} \in D(\mathscr{A}) \\
& \Leftrightarrow & (I-\lambda\mathscr{A}-\lambda\mathscr{F})\begin{pmatrix} 0_{C(\Omega)} \\ \varphi \end{pmatrix}\in \overline{D(\mathscr{A})} \text{ with }\begin{pmatrix} 0_{C(\Omega)} \\ \varphi \end{pmatrix} \in D(\mathscr{A}),\forall \lambda >0.
\end{eqnarray*}
Let $\begin{pmatrix} 0_{C(\Omega)} \\ \psi \end{pmatrix}\in \{0_{C(\Omega)}\}\times BUC_{\alpha}$ be fixed. Then $\forall \lambda >0$, consider
\begin{equation*}
(I-\lambda\mathscr{A}-\lambda\mathscr{F})\begin{pmatrix} 0_{C(\Omega)} \\ \varphi _{\lambda}\end{pmatrix}=\begin{pmatrix} 0_{C(\Omega)} \\ \psi \end{pmatrix} \text{ with }\begin{pmatrix} 0_{C(\Omega)} \\ \varphi_\lambda \end{pmatrix} \in D(\mathscr{A}),
\end{equation*}
which is equivalent to the fixed point problem
\begin{equation*}
\begin{pmatrix} 0_{C(\Omega)} \\ \varphi _{\lambda}\end{pmatrix}=\lambda^{-1}\left(\lambda^{-1} I-\mathscr{A}\right)^{-1}\begin{pmatrix} 0_{C(\Omega)} \\ \psi \end{pmatrix}+\left(\lambda^{-1} I-\mathscr{A}\right)^{-1}\mathscr{F}\begin{pmatrix} 0_{C(\Omega)} \\ \varphi _{\lambda}\end{pmatrix}.
\end{equation*}
Define the map
\begin{equation*}
\Phi _{\lambda}\begin{pmatrix} 0_{C(\Omega)} \\ \varphi \end{pmatrix}:=\lambda^{-1} \left(\lambda^{-1} I-\mathscr{A}\right)^{-1}\begin{pmatrix} 0_{C(\Omega)} \\ \psi \end{pmatrix}+\left( \lambda^{-1} I-\mathscr{A}\right)^{-1}\mathscr{F}\begin{pmatrix} 0_{C(\Omega)} \\ \varphi \end{pmatrix}.
\end{equation*}
Then by using the fact that $\mathscr{F}$ is Lipschitz on bounded sets and $\mathscr{A}$ is a Hille-Yosida operator, one can prove that there exists $\eta=\eta(r)>0$ such that
\begin{equation*}
\Phi _{\lambda}(B_{\psi ,r})\subset B_{\psi ,r},\forall \lambda \in (0,\eta]
\end{equation*}
and $\Phi _{\lambda}$ is a strict contraction on $B_{\psi ,r}$, where 
$$
B_{\psi ,r}:=B\left(\begin{pmatrix} 0_{C(\Omega)} \\ \psi \end{pmatrix},r\right)
$$
is the ball with center $\begin{pmatrix} 0_{C(\Omega)} \\ \psi \end{pmatrix}$ and radius $r$ in $\overline{D(\mathscr{A})}=\{0_{C(\Omega)}\}\times BUC_{\alpha}$. Thus by the Banach fixed point theorem, $\forall \lambda\in (0,\eta]$, there exists $\begin{pmatrix} 0_{C(\Omega)} \\ \varphi _{\lambda} \end{pmatrix}\in B_{\psi ,r}$ satisfying 
\begin{equation*}
\Phi _{\lambda}\begin{pmatrix} 0_{C(\Omega)} \\ \varphi _{\lambda} \end{pmatrix}=\begin{pmatrix} 0_{C(\Omega)} \\ \varphi _{\lambda} \end{pmatrix}.
\end{equation*}
Finally, since $\begin{pmatrix} 0_{C(\Omega)} \\ \psi \end{pmatrix}\in \overline{D(\mathscr{A})}$ and by using \eqref{EQ2.2} and \eqref{EQ2.3}, we have
\begin{equation*}
\lim _{\lambda\rightarrow 0+} \left\Vert \begin{pmatrix} 0_{C(\Omega)} \\ \varphi _{\lambda} \end{pmatrix}-\begin{pmatrix} 0_{C(\Omega)} \\ \psi \end{pmatrix} \right\Vert_{\mathscr{X}}=0,
\end{equation*}
which completes the proof.
\end{proof}

\section{Properties of the integral equation for $\protect\tau(t,x)$}

In this section we will make the following assumption.

\begin{assumption}
\label{ASS3.1} Let $(\varphi ,\tau _{0}) \in C((-\infty ,0] ,C(\Omega)) \times C_+(\Omega)$. Let $A\in C((-\infty ,r) ,C(\Omega))$ (with $r\in (0,+\infty]$) be given and satisfy 
\begin{equation*}
A(t,.)=\varphi (t,.),\forall t\leqslant 0.
\end{equation*}
\end{assumption}

\begin{lemma}
\label{LE3.2} There exists a uniquely determined map $\tau:[0,r)\rightarrow C(\Omega)$ satisfying 
\begin{equation}  \label{EQ3.1}
\int_{t-\tau (t,x)}^{t}f(A(s,.))(x)ds=\int_{-\tau _{0}(x)}^{0}f(\varphi(s,.))(x)ds,\forall t\in [0,r),\forall x\in\Omega.
\end{equation}
Moreover this uniquely determined map $t\mapsto \tau (t,x)$ is continuously differentiable and satisfies the following equation 
\begin{equation}  \label{EQ3.2}
\left\lbrace
\begin{array}{l}
\partial _{t}\tau(t,x)=1-\dfrac{f(A(t,.))(x)}{f(A(t-\tau (t,x),.))(x)},\forall t\in [0,r),\forall x\in\Omega,\\
\tau (0,x)=\tau _{0}(x).
\end{array}
\right.
\end{equation}
Conversely if $t\mapsto \tau(t,x)$ is a $C^{1}$ map satisfying the above ordinary differential equation (\ref{EQ3.2}), then it also satisfies the above integral equation (\ref{EQ3.1}).
\end{lemma}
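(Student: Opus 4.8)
The plan is to fix $x \in \Omega$ and reduce the integral equation \eqref{EQ3.1} to a scalar implicit equation in the unknown $\sigma = \tau(t,x)$. Writing $g_x(s) := f(A(s,.))(x)$ and $c(x) := \int_{-\tau_0(x)}^0 f(\varphi(s,.))(x)\,ds$, Assumption \ref{ASS1.1} guarantees that $g_x$ is continuous (since $s \mapsto A(s,.)$ is continuous into $C(\Omega)$ and $f$ is Lipschitz, hence continuous) and satisfies $0 < g_x(s) \leqslant M$. Setting
\begin{equation*}
\Phi_x(t,\sigma) := \int_{t-\sigma}^t g_x(s)\, ds,
\end{equation*}
the integral equation \eqref{EQ3.1} reads $\Phi_x(t, \tau(t,x)) = c(x)$.

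First I would establish pointwise existence and uniqueness. Since $g_x > 0$, the map $\sigma \mapsto \Phi_x(t,\sigma)$ is continuous and strictly increasing on $[0,\infty)$ with $\Phi_x(t,0) = 0$; moreover, using $g_x(s)=f(\varphi(s,.))(x)$ for $s\leqslant 0$, one has $\lim_{\sigma \to \infty}\Phi_x(t,\sigma) = \int_{-\infty}^0 f(\varphi(s,.))(x)\,ds + \int_0^t g_x(s)\,ds \geqslant c(x) + \int_0^t g_x(s)\,ds > c(x)$ for every $t > 0$ (the first inequality because extending the interval of a positive integrand increases it, the last because $\int_0^t g_x > 0$). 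The intermediate value theorem then produces a unique $\tau(t,x) \in [0,\infty)$ solving $\Phi_x(t,\tau(t,x)) = c(x)$, which proves the first assertion; at $t=0$ the value $\sigma = \tau_0(x)$ already solves the equation, so strict monotonicity forces $\tau(0,x) = \tau_0(x)$. Since $(s,x) \mapsto f(A(s,.))(x)$ and $x \mapsto c(x)$ are jointly continuous, strict monotonicity of $\Phi_x$ in $\sigma$ yields continuous dependence of the root on $x$, so that $\tau(t,.) \in C(\Omega)$ and $\tau$ is a well-defined map into $C(\Omega)$.

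Next, for the $C^1$ regularity and the ODE \eqref{EQ3.2}, I would apply the implicit function theorem in $t$ for fixed $x$. Because $g_x$ is continuous, $\Phi_x$ is jointly $C^1$ with $\partial_t \Phi_x(t,\sigma) = g_x(t) - g_x(t-\sigma)$ and $\partial_\sigma \Phi_x(t,\sigma) = g_x(t-\sigma)$, both continuous. As $\partial_\sigma \Phi_x(t,\tau(t,x)) = g_x(t-\tau(t,x)) > 0$, the implicit function theorem makes $t \mapsto \tau(t,x)$ continuously differentiable and gives
\begin{equation*}
\partial_t \tau(t,x) = -\frac{\partial_t \Phi_x}{\partial_\sigma \Phi_x} = -\frac{g_x(t) - g_x(t-\tau(t,x))}{g_x(t-\tau(t,x))} = 1 - \frac{f(A(t,.))(x)}{f(A(t-\tau(t,x),.))(x)},
\end{equation*}
which is exactly \eqref{EQ3.2}. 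For the converse, given a $C^1$ solution of \eqref{EQ3.2}, I would differentiate $H(t) := \Phi_x(t,\tau(t,x))$: by the chain rule and the two partials above, substituting the ODE gives $H'(t) = [g_x(t) - g_x(t-\tau(t,x))] + g_x(t-\tau(t,x))\,\partial_t\tau(t,x) = 0$, while $H(0) = c(x)$ from $\tau(0,x) = \tau_0(x)$; hence $H \equiv c(x)$, which is \eqref{EQ3.1}.

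The routine parts are the implicit-function-theorem computation and the converse, which reduce to the fundamental theorem of calculus. The main obstacle is the existence/uniqueness step: one must verify the strict inequality $\lim_{\sigma\to\infty}\Phi_x(t,\sigma) > c(x)$ for $t>0$ (to guarantee the root is attained at a finite $\sigma$) while allowing $\int_{-\infty}^0 f(\varphi(s,.))(x)\,ds$ to be infinite, and one must argue continuity of the root in $x$ so that $\tau(t,.)$ genuinely lies in $C(\Omega)$. The latter continuity relies on the joint continuity of $(s,x)\mapsto f(A(s,.))(x)$ together with a positive lower bound for $g_x$ that is uniform over the compact ranges of $s$ arising near a fixed $(t,x_0)$, which the strict monotonicity then converts into continuity of the implicitly defined $\tau$.
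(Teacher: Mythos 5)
Your proposal is correct and follows essentially the same route as the paper: intermediate value theorem plus strict monotonicity of $\sigma\mapsto\int_{t-\sigma}^{t}f(A(s,.))(x)\,ds$ for existence and uniqueness, the implicit function theorem for the $C^{1}$ regularity and the formula \eqref{EQ3.2}, and the fundamental theorem of calculus for the converse (you differentiate $t\mapsto\Phi_x(t,\tau(t,x))$ where the paper integrates and changes variables, but these are equivalent). The only place where the paper does noticeably more work is the continuity of $x\mapsto\tau(t,x)$, which it obtains from the a priori bound $\tau(t,x)\leqslant t+\tau_{0}(x)$ (available directly from $\Phi_x(t,t+\tau_0(x))\geqslant c(x)$ in your notation) together with explicit quantitative estimates; your sketch identifies exactly these ingredients, so nothing essential is missing.
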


\begin{remark}
By using equation (\ref{EQ3.2}), it is easy to check that 
\begin{equation*}
\tau _{0}(x)>0\Rightarrow \tau (t,x)>0,\forall t\in [0,r),\forall x\in\Omega,
\end{equation*}
and 
\begin{equation*}
\tau _{0}(x)=0 \Rightarrow \tau (t,x)=0,\forall t\in [0,r),\forall x\in\Omega.
\end{equation*}
\end{remark}

\begin{proof}
\textit{Step 1 (Existence of $\tau(t,x)$):} By Assumption \ref{ASS1.1}, $f$ is strictly positive, so fix $t\in [0,r)$ and $x\in\Omega$, and by considering the function $\tau\mapsto\dint_{t-\tau }^{t}f(A(s,.))(x)ds$ and observing that 
\begin{equation*}
\int_{t-0}^{t}f(A(s,.))(x)ds=0\leqslant\int_{-\tau _{0}(x)}^{0}f(\varphi (s,.))(x)ds,
\end{equation*}
\begin{equation*}
\int_{t-(t+\tau _{0}(x))}^{t}f(A(s,.))(x)ds\geqslant \int_{-\tau _{0}(x)}^{0}f(\varphi (s,.))(x)ds,
\end{equation*}
it follows by the intermediate value theorem that there exists a unique $\tau (t,x)\in [0,t+\tau _{0}(x)]$ satisfying \eqref{EQ3.1}. 

\noindent\textit{Step 2 (The map $t\mapsto t-\tau(t,x)$ is increasing):}  First let's prove that the function $t\mapsto t-\tau(t,x)$ is increasing. Indeed, assume by contradiction that $t_{1}\leqslant t_{2}$ while $t_{1}-\tau (t_{1},x)>t_{2}-\tau (t_{2},x)$, namely we have
$$
t_{2}-\tau (t_{2},x)<t_{1}-\tau (t_{1},x)<t_{1}\leqslant t_{2}.
$$
Then by \eqref{EQ3.1} we have
\begin{eqnarray*}
& & \int _{t_{1}-\tau (t_{1},x)}^{t_{1}}f(A(s,.))(x)ds=\int _{t_{2}-\tau (t_{2},x)}^{t_{2}}f(A(s,.))(x)ds \\
& = & \int _{t_{2}-\tau (t_{2},x)}^{t_{1}-\tau (t_{1},x)}f(A(s,.))(x)ds+\int _{t_{1}-\tau (t_{1},x)}^{t_{1}}f(A(s,.))(x)ds+\int _{t_{1}}^{t_{2}}f(A(s,.))(x)ds,
\end{eqnarray*}
thus
$$
 \int _{t_{2}-\tau (t_{2},x)}^{t_{1}-\tau (t_{1},x)}f(A(s,.))(x)ds+\int _{t_{1}}^{t_{2}}f(A(s,.))(x)ds=0,
$$
which is impossible since the function $f$ is strictly positive. 

\noindent\textit{Step 3 (The continuity of the map $ x \mapsto \tau(t,x)$):} Next we will prove the continuity of the map $x\mapsto \tau (t,x)$. By step 2 we have $\forall t\in [0,r)$,
$$
t-\tau (t,x)\leqslant -\tau _{0}(x) \Leftrightarrow 0\leqslant \tau(t,x) \leqslant t-\tau_0(x).
$$
Now the boundness of the function $x \mapsto \tau(t,x)$ follows from the boundedness of $\tau_0(x)$. Then for any $t\in [0,r)$, we know that
$$
\tau ^{\infty}(t):=\sup _{x\in\Omega}\tau (x,t)<+\infty .
$$
Let $\xi (x):=\dint _{-\tau _{0}(x)}^{0}f(\varphi(s,.))(x)ds$, $\forall x\in\Omega$. Then for any $x_{0},x\in\Omega$, assume without loss of generality that $\tau _{0}(x_{0})>\tau _{0} (x)$, then
\begin{eqnarray*}
& & \vert\xi (x_{0})-\xi (x)\vert =\left\vert\int _{-\tau _{0}(x_{0})}^{0}f(\varphi(s,.))(x_{0})ds-\int _{-\tau _{0}(x)}^{0}f(\varphi(s,.))(x)ds\right\vert \\
& \leqslant & \left\vert\int _{-\tau _{0}(x_{0})}^{-\tau _{0}(x)}f(\varphi(s,.))(x_{0})ds\right\vert +\int _{-\tau _{0}(x)}^{0}\vert f(\varphi(s,.))(x_{0})-f(\varphi(s,.))(x)\vert ds \\
& \leqslant & \vert\tau _{0}(x_{0})-\tau _{0}(x)\vert f(m_{1})(x_{0})+\tau _{0}^{\infty}\Vert f\Vert _{\mathrm{Lip}}\sup _{s\in [-\tau _{0}^{\infty},0]}\vert\varphi (s,x_{0})-\varphi (s,x)\vert ,
\end{eqnarray*}
where 
$$
m_{1}:=\inf _{s\in [-\tau _{0}^{\infty},0]}\Vert\varphi (s,.)\Vert _{\infty}\text{ and }\tau _{0}^{\infty}:=\sup _{x\in\Omega}\tau _{0}(x).
$$
Then the continuity of $\xi(x)$ in $x$ follows from the continuity of $\tau _{0}(x)$ and $\varphi (t,x)$ in $x$. Now for fixed $t\in [0,r)$, by \eqref{EQ3.1} we have
\begin{eqnarray*}
& & \xi (x_{0})-\xi (x)=\int _{t-\tau (t,x_{0})}^{t}f(A(s,.))(x_{0})ds-\int _{t-\tau (t,x)}^{t}f(A(s,.))(x)ds \\
& = & \int _{t-\tau (t,x_{0})}^{t-\tau (t,x)}f(A(s,.))(x_{0})ds+\int _{t-\tau (t,x)}^{t}(f(A(s,.))(x_{0})-f(A(s,.))(x)),
\end{eqnarray*}
thus
\begin{equation*}
\int _{t-\tau (t,x_{0})}^{t-\tau (t,x)}f(A(s,.))(x_{0})ds\leqslant \vert\xi (x_{0})-\xi (x)\vert +\tau ^{\infty}(t)\Vert f\Vert _{\mathrm{Lip}}\sup _{s\in [-\tau ^{\infty}(t),0]}\vert A(s,x)-A(s,x_{0})\vert .
\end{equation*}
On the other hand, we have
\begin{equation*}
\int _{t-\tau (t,x_{0})}^{t-\tau (t,x)}f(A(s,.))(x_{0})ds \geqslant \vert\tau (t,x_{0})-\tau (t,x)\vert f(M_{1})(x_{0}),
\end{equation*}
where 
$$
M_{1}:=\sup _{s\in [-\tau _{0}^{\infty},t]}\Vert A(s,.)\Vert _{\infty}.
$$
Then there exists a constant $\eta :=\eta (t,x_{0})$ such that
\begin{equation*}
\vert\tau (t,x_{0})-\tau (t,x)\vert\leqslant \eta\left(\vert\xi (x_{0})-\xi (x)\vert +\sup _{s\in [-\tau ^{\infty}(t),0]}\vert A(s,x)-A(s,x_{0})\vert \right).
\end{equation*}
Then the continuity of $\tau (t,x)$ in $x$ follows from the continuity of $\xi(x)$ and $A(t,x)$ in $x$.

\noindent\textit{Step 4 (Differentiability of the map $t \mapsto \tau(t,x)$):} By applying the implicit function theorem to the map $\psi:[0,r]\times C(\Omega)\rightarrow C(\Omega)$ defined by 
$$
\psi(t,\gamma)(x)=\dint_{\gamma(x) }^{t}f(A(s,.))(x)ds
$$
(which is possible since $\dfrac{\partial\psi(t,\gamma)}{\partial\gamma}(\hat{\gamma})(x)=-f(A(\gamma(x),.))(x)\hat{\gamma}(x)$ and by Assumption \ref{ASS1.1}, $f$ is strictly positive), we deduce that $t\mapsto t-\tau (t,x)$ is continuously differentiable. By (\ref{EQ3.2}) we have for each $x\in\Omega$, $\forall t\geqslant s$,
\begin{equation*}
\tau(t,x)=\tau(s,x)+\int_{s}^{t}\left(1-\frac{f(A(l,.))(x)}{f(A(l-\tau (l,x),.))(x)}\right)dl.
\end{equation*}
Notice that the integrand in the above formula is continuous since the map $t\mapsto A(t-\tau(t,x),x)$ is continuous following from the fact that the maps $t\mapsto t-\tau(t,x)$ and $(t,x)\mapsto A(t,x)$ are continuous. Hence, $\tau\in C^{1}([0,r),C(\Omega))$.

\noindent\textit{Step 5 (\eqref{EQ3.2}$\Rightarrow$\eqref{EQ3.1}):} Conversely, assume that $\tau (t,x)$ is a solution of (\ref{EQ3.2}). Then 
\begin{equation*}
f(A(t,.))(x)=\left(1-\frac{\partial\tau(t,x)}{\partial t}\right) f(A(t-\tau (t,x),.))(x),\forall t\in [0,r),\forall x\in\Omega.
\end{equation*}
Integrating both sides with respect to $t$, we have 
\begin{equation*}
\int_{0}^{t}f(A(s,.))(x)ds=\int_{0}^{t}f(A(s-\tau (s,x),.))(x)\left(1-\frac{\partial\tau(s,x)}{\partial s}\right) ds.
\end{equation*}
Make the change of variable $l=s-\tau (s,x)$, we have $\forall t\in [0,r)$, $\forall x\in\Omega$,
\begin{equation*}
\begin{split}
& \int_{0}^{t}f(A(s,.))(x)ds=\int_{-\tau _{0}(x)}^{t-\tau (t,x)}f(A(l,.))(x)dl \\
\Leftrightarrow & \int_{t-\tau (t,x)}^{t}f(A(s,x))(x)ds+\int_{0}^{t-\tau(t,x)}f(A(s,.))(x)ds=\int_{-\tau _{0}(x)}^{t-\tau (t,x)}f(A(s,.))(x)ds \\
\Leftrightarrow & \int_{t-\tau (t,x)}^{t}f(A(s,.))(x)ds=\int_{-\tau _{0}(x)}^{t-\tau(t,x)}f(A(s,.))(x)ds-\int_{0}^{t-\tau (t,x)}f(A(s,.))(x)ds,
\end{split}%
\end{equation*}
this implies that $\tau(t,x)$ also satisfies the equation (\ref{EQ3.1}).
\end{proof}

In order to see that the delay $\tau $ is also a functional of $A_{t}$ and $( \varphi ,\tau _{0}) $, we define the following functional. We define the map $\widehat{\tau}:D(\widehat{\tau})\subset BUC_{\alpha } \times C(\Omega)\rightarrow C(\Omega)$ as the solution of 
\begin{equation}\label{EQ3.3}
\int _{-\widehat{\tau}(\phi ,\delta)(x)}^{0}f(\overline{\phi} (s,.))(x)ds=\delta(x)
\end{equation}
where 
\begin{equation*}
\overline{\phi}(s,x):=
\left\{ \begin{array}{l}
\phi (s,x), \text{ if } s \leqslant 0\\
\phi (0,x), \text{ if } s \geqslant 0.
\end{array}
\right.
\end{equation*}
Since by Assumption \ref{ASS1.1} the map $f(\phi (0,.))(x)>0$, then if $\delta(x)\leqslant 0$ we have $\widehat{\tau}(\phi ,\delta)(x) \leqslant 0$ and 
$$
\int _{-\widehat{\tau}(\phi ,\delta)(x)}^{0} f(\phi (0,.))(x)ds=\delta(x) \Leftrightarrow \widehat{\tau}(\phi ,\delta)(x)= \dfrac{\delta(x)}{f(\phi (0,.))(x)}. 
$$ 
It follows that the domain $D(\widehat{\tau})$ is given by 
\begin{equation*}
D(\widehat{\tau}) =\left\{( \phi ,\delta) \in BUC_{\alpha }\times C(\Omega):\delta(x)<\int _{-\infty }^{0}f(\phi (s,.))(x)ds\text{ if } \delta(x)> 0 \right\} .
\end{equation*}

For clarity we prove the following lemma. 
\begin{lemma} \label{LE3.4} 
For each $(\phi ,\delta) \in D(\widehat{\tau})$ there exists $\widehat{\tau}(\phi ,\delta) \in C(\Omega)$. 
\end{lemma}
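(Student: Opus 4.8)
The plan is to follow the two-step pattern of Lemma \ref{LE3.2}: first establish the pointwise existence and uniqueness of $\widehat{\tau}(\phi,\delta)(x)$ for each fixed $x\in\Omega$, and then prove that the resulting function $x\mapsto\widehat{\tau}(\phi,\delta)(x)$ is continuous on $\Omega$, so that it indeed defines an element of $C(\Omega)$.

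For the pointwise step, fix $x\in\Omega$ and consider the function $G_x(\sigma):=\int_{-\sigma}^{0}f(\overline{\phi}(s,.))(x)\,ds$. Since $f$ is strictly positive by Assumption \ref{ASS1.1}, $G_x$ is continuous and strictly increasing with $G_x(0)=0$. When $\sigma\leqslant 0$ the definition of $\overline{\phi}$ gives $G_x(\sigma)=\sigma\,f(\phi(0,.))(x)$, which already produces the explicit (non-positive) solution whenever $\delta(x)\leqslant 0$. When $\sigma\to+\infty$, $G_x(\sigma)$ increases to $\int_{-\infty}^{0}f(\phi(s,.))(x)\,ds\in(0,+\infty]$, and the defining property of $D(\widehat{\tau})$ guarantees that $\delta(x)$ lies in the range of $G_x$ whenever $\delta(x)>0$. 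The intermediate value theorem, together with strict monotonicity, then yields a unique $\widehat{\tau}(\phi,\delta)(x)$ solving \eqref{EQ3.3}.

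For continuity, fix $x_0\in\Omega$ and set $\sigma(x):=\widehat{\tau}(\phi,\delta)(x)$. Subtracting the two instances of \eqref{EQ3.3} at $x$ and $x_0$ and isolating the integral over the interval between $-\sigma(x)$ and $-\sigma(x_0)$, I would bound that integral below by $c\,|\sigma(x)-\sigma(x_0)|$, where $c>0$ is a lower bound for $(s,x)\mapsto f(\overline{\phi}(s,.))(x)$ over the relevant bounded range of $s$ and over a neighbourhood of $x_0$; such a $c$ exists because this map is continuous and strictly positive on a compact set (continuity of $s\mapsto\overline{\phi}(s,.)\in C(\Omega)$, of $f$, and of each $f(\overline{\phi}(s,.))\in C(\Omega)$). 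The remaining terms are then controlled by $|\delta(x)-\delta(x_0)|$, which is small by continuity of $\delta$, and by $\sup_s|f(\overline{\phi}(s,.))(x)-f(\overline{\phi}(s,.))(x_0)|$, small by the continuity in $x$ of $f(\overline{\phi}(s,.))$ uniformly over the bounded $s$-range, exactly as in Step 3 of Lemma \ref{LE3.2}. Combining these estimates gives $|\sigma(x)-\sigma(x_0)|\to 0$ as $x\to x_0$.

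The main obstacle is that, unlike Lemma \ref{LE3.2} where the bound $\tau(t,x)\leqslant t+\tau_{0}(x)$ was automatic, here the integral runs to $-\infty$ and $\sigma(x)$ has no a priori upper bound; one must first show that $\sigma$ is locally bounded near $x_0$ before the lower bound on $f$ over a bounded $s$-range can be invoked. I would obtain this by contradiction: if $\sigma(x_n)\to+\infty$ along some $x_n\to x_0$, then for each fixed $T>0$ we have $\delta(x_n)\geqslant\int_{-T}^{0}f(\phi(s,.))(x_n)\,ds$ for $n$ large, and dominated convergence on the bounded interval $[-T,0]$ (using $0<f\leqslant M$) gives $\int_{-T}^{0}f(\phi(s,.))(x_n)\,ds\to\int_{-T}^{0}f(\phi(s,.))(x_0)\,ds$; letting $T\to+\infty$ yields $\delta(x_0)=\lim_n\delta(x_n)\geqslant\int_{-\infty}^{0}f(\phi(s,.))(x_0)\,ds$, contradicting the domain condition $\delta(x_0)<\int_{-\infty}^{0}f(\phi(s,.))(x_0)\,ds$ (which holds in all cases since the right-hand side is strictly positive). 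A minor additional point is to glue the two regimes $\delta(x)\leqslant 0$ and $\delta(x)>0$ continuously across points where $\delta(x_0)=0$, which follows since $\sigma(x)\to 0$ from both sides as $\delta(x)\to 0$.
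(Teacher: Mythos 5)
Your proposal is correct and follows essentially the same route as the paper: pointwise existence via the explicit formula for $\delta(x)\leqslant 0$ and the intermediate value theorem for $\delta(x)>0$, a boundedness step by contradiction exploiting the strict inequality in the definition of $D(\widehat{\tau})$ (the paper globalizes this over the compact $\Omega$ via a subsequence and a neighbourhood where $\delta(x)<\int_{-M}^{0}f(\phi(s,.))(x)\,ds$, whereas you pass to the limit with dominated convergence --- a cosmetic difference), and the same subtraction estimate for continuity, with your positive lower bound on $f(\overline{\phi}(s,.))(x)$ obtained by compactness where the paper uses the monotonicity of $f$ to write $f(M_{1})(x_{0})$. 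No gaps.
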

\begin{proof} Let $(\phi ,\delta) \in D(\widehat{\tau})$ be fixed.
 
\noindent \textit{Step 1 (Existence of $\widehat{\tau}(\phi ,\delta)(x)$):} Let $x\in \Omega$ be fixed. If $\delta(x)\leqslant 0$, we have 
$$
\widehat{\tau}(\phi ,\delta)(x)= \dfrac{\delta(x)}{f(\phi (0,.))(x)}.
$$ 
If $\delta(x)> 0$, by the definition of the domain $D(\widehat{\tau})$ we have 
$$
\delta(x)<\int _{-\infty }^{0}f(\phi (s,.))(x)ds,
$$
therefore by the intermediate value theorem, we can find $ \widehat{\tau}(\phi ,\delta)(x) \in \mathbb{R} $ such that 
$$
\delta(x)=\int _{-\widehat{\tau}(\phi ,\delta)(x) }^{0}f(\phi (s,.))(x)ds.
$$
\noindent \textit{Step 2 (Boundedness of $\widehat{\tau}(\phi ,\delta)(x)$):} Assume by contradiction that $x \mapsto \widehat{\tau}(\phi ,\delta)(x)$ is unbounded. Since $\Omega$ is compact, we can find a converging sequence $x_n \to \overline{x} \in \Omega$ as $n \to + \infty$  such that 
$$
\lim_{n \to + \infty} \widehat{\tau}(\phi ,\delta)(x_n) = + \infty.
$$
It is sufficient to consider the case $ \delta(x_n) > 0$, since the case $\delta(x_n) < 0$ is explicit. By the continuity of the function $\delta$ we can assume that $\delta(\overline{x} )>0$. By the definition of the domain $D(\widehat{\tau})$ we have 
$$
\delta(\overline{x} ) < \int _{-\infty }^{0}f(\phi (s,.))(\overline{x})ds.
$$
So we can find a constant $M>0$ such that 
$$
\delta(\overline{x} ) < \int _{-M}^{0}f(\phi (s,.))(\overline{x})ds,
$$
and by continuity we can find a neighborhood $U$ of $\overline{x}$ such that 
$$
\delta(x ) < \int _{-M}^{0}f(\phi (s,.))(x)ds, \forall x \in U. 
$$
It follows that for all integer $n$ large enough,
$$
\widehat{\tau}(\phi ,\delta)(x_n) \leqslant M 
$$
a contradiction. \\
\noindent \textit{Step 3 (Continuity of the map $x \mapsto \widehat{\tau}(\phi ,\delta)(x)$):} From the previous part, we know that  
$$
\widehat{\tau}^{\infty}:=\sup_{x \in \Omega} \vert \widehat{\tau}(\phi ,\delta)(x) \vert < +\infty. 
$$
Fix $x_{0}\in\Omega$. If $\delta(x_{0}) <0$ there is nothing to prove. Let us assume that 
$$
\delta(x_{0}) \geqslant 0.
$$ 
Let $x_n \to x_{0}$ be a converging sequence. If $\delta(x_{0}) = 0$ and $\delta(x_{n})<0$, it is clear that $\widehat{\tau}(\phi ,\delta)(x_n) \to \widehat{\tau}(\phi ,\delta)(x_0)=0$. Without loss of generality we can assume that $\delta(x_{n})\geqslant 0$ for each integer $n\geqslant 0$.  By \eqref{EQ3.3} we have
\begin{eqnarray*}
\delta(x_{0})-\delta(x) & = & \int _{-\widehat{\tau}(\phi ,\delta)(x_{0})}^{0}f(\overline{\phi} (s,.))(x_{0})ds-\int _{-\widehat{\tau}(\phi ,\delta)(x)}^{0}f(\overline{\phi} (s,.))(x)ds \\
& = & \int _{-\widehat{\tau}(\phi ,\delta)(x_{0})}^{-\widehat{\tau}(\phi ,\delta)(x)}f(\overline{\phi} (s,.))(x_{0})ds+\int _{-\widehat{\tau}(\phi ,\delta)(x)}^{0}f(\overline{\phi} (s,.))(x_{0})ds \\
& & -\int _{-\widehat{\tau}(\phi ,\delta)(x)}^{0}f(\overline{\phi} (s,.))(x)ds.
\end{eqnarray*}
Assume without loss of generality that $\widehat{\tau}(\phi ,\delta)(x_{0})>\widehat{\tau}(\phi ,\delta)(x)$, then we have
\begin{eqnarray*}
& & \int _{-\widehat{\tau}(\phi ,\delta)(x_{0})}^{-\widehat{\tau}(\phi ,\delta)(x)}f(\overline{\phi} (s,.))(x_{0})ds \\
& = & (\delta(x_{0})-\delta(x))+\int _{-\widehat{\tau}(\phi ,\delta)(x)}^{0}(f(\overline{\phi} (s,.))(x)-f(\overline{\phi} (s,.))(x_{0}))ds \\
& \leqslant & \vert\delta(x_{0})-\delta(x)\vert +\int _{-\widehat{\tau}(\phi ,\delta)(x)}^{0}\Vert f\Vert _{\mathrm{Lip}}\left\vert \overline{\phi} (s,x)-\overline{\phi} (s,x_{0})\right\vert ds \\
& \leqslant & \vert\delta(x_{0})-\delta(x)\vert +\widehat{\tau}^{\infty}\Vert f\Vert _{\mathrm{Lip}}\sup _{s\in [-\widehat{\tau}^{\infty},0]}\vert \phi (s,x)-\phi (s,x_{0})\vert ,
\end{eqnarray*}
and
\begin{equation*}
\int _{-\widehat{\tau}(\phi ,\delta)(x_{0})}^{-\widehat{\tau}(\phi ,\delta)(x)}f(\overline{\phi} (s,.))(x_{0})ds \geqslant \vert \widehat{\tau}(\phi ,\delta)(x_{0})-\widehat{\tau}(\phi ,\delta)(x)\vert f(M_{1})(x_{0}),
\end{equation*}
where $M_{1}=\sup\limits _{s\in [-\widehat{\tau}^{\infty},0]}\Vert \phi (s,.)\Vert _{\infty}$. Thus there exists a constant $\eta:=\eta (x_{0})$ such that
\begin{equation*}
\vert \widehat{\tau}(\phi ,\delta)(x_{0})-\widehat{\tau}(\phi ,\delta)(x)\vert \leqslant \eta \left(\vert\delta(x_{0})-\delta(x)\vert +\sup _{s\in [-\widehat{\tau}^{\infty},0]}\vert \phi (s,x)-\phi (s,x_{0})\vert \right).
\end{equation*}
Then the result follows by the continuity of the functions $\delta$ and $\phi$ in $x$.
\end{proof}
\begin{lemma}\label{LE3.5}
Assume in addition that $f$ is continuously differentiable. Then the domain $D(\widehat{\tau})$ is an open subset of $BUC_{\alpha }\times C(\Omega)$ and the map $\widehat{\tau}:D(\widehat{\tau})\subset BUC_{\alpha } \times C(\Omega)\rightarrow C(\Omega)$ is continuously differentiable. 
\end{lemma}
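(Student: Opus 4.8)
The plan is to realize $\widehat{\tau}$ as the solution of an implicit equation and invoke the Banach-space implicit function theorem. I would define $G:BUC_{\alpha}\times C(\Omega)\times C(\Omega)\rightarrow C(\Omega)$ by
\begin{equation*}
G(\phi,\delta,\sigma)(x):=\int_{-\sigma(x)}^{0}f(\overline{\phi}(s,.))(x)\,ds-\delta(x),
\end{equation*}
so that, since $\sigma\mapsto\int_{-\sigma}^{0}f(\overline{\phi})$ is strictly increasing (because $f>0$), the relation $G(\phi,\delta,\sigma)=0$ holds exactly when $\sigma=\widehat{\tau}(\phi,\delta)$, in view of \eqref{EQ3.3}. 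The two formal partial derivatives are
\begin{equation*}
\partial_{\sigma}G(\phi,\delta,\sigma)[h](x)=f(\overline{\phi}(-\sigma(x),.))(x)\,h(x),\qquad \partial_{\delta}G=-\mathrm{Id},
\end{equation*}
\begin{equation*}
\partial_{\phi}G(\phi,\delta,\sigma)[\psi](x)=\int_{-\sigma(x)}^{0}Df(\overline{\phi}(s,.))[\overline{\psi}(s,.)](x)\,ds.
\end{equation*}
The structure is favorable: the state-dependent delay enters only through the limit of integration, so $\partial_{\sigma}G$ comes from the fundamental theorem of calculus and needs only continuity of $s\mapsto f(\overline{\phi}(s,.))(x)$, while $\partial_{\phi}G$ differentiates only $f$, which is now assumed to be $C^{1}$. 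In particular the troublesome evaluation $\phi\mapsto\phi(-\sigma(x),.)$ is never itself differentiated.

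The principal obstacle is to prove that $G$ is continuously Fréchet differentiable, i.e. that the two displayed expressions are the genuine partial derivatives and depend continuously on $(\phi,\delta,\sigma)$ in operator norm; once this is done, continuity of the partials gives $G\in C^{1}$. For $\partial_{\phi}G$ I would justify differentiation under the integral sign by a first-order Taylor expansion of $f$ with remainder controlled by the modulus of continuity of $Df$, using that $\sigma$ ranges over a bounded set (so the $s$-integration runs over a uniformly bounded interval) and that $\Omega$ is compact; the continuity of the resulting map into $C(\Omega)$ is then obtained exactly as in the Lipschitz estimates of Lemmas \ref{LE3.2} and \ref{LE3.4}. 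For $\partial_{\sigma}G$, which is the multiplication operator with multiplier $m_{\phi,\sigma}(x):=f(\overline{\phi}(-\sigma(x),.))(x)$, continuity in $(\phi,\sigma)$ reduces to continuity of $(\phi,\sigma)\mapsto m_{\phi,\sigma}$ in $C(\Omega)$. This is where I would use that $\phi\in BUC_{\alpha}$ is uniformly continuous in the time variable on bounded intervals, so that $x\mapsto\overline{\phi}(-\sigma(x),.)$ is continuous from $\Omega$ into $C(\Omega)$, and then compose with the continuous maps $f$ and the evaluation $C(\Omega)\times\Omega\rightarrow\mathbb{R}$.

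With $G\in C^{1}$ in hand, I would fix $(\phi_{0},\delta_{0})\in D(\widehat{\tau})$, set $\sigma_{0}:=\widehat{\tau}(\phi_{0},\delta_{0})\in C(\Omega)$ (Lemma \ref{LE3.4}), and note $G(\phi_{0},\delta_{0},\sigma_{0})=0$. The operator $\partial_{\sigma}G(\phi_{0},\delta_{0},\sigma_{0})$ is multiplication by $m_{\phi_{0},\sigma_{0}}$, a continuous strictly positive function on the compact set $\Omega$, hence bounded below by some $c>0$; it is therefore an isomorphism of $C(\Omega)$, with inverse division by $m_{\phi_{0},\sigma_{0}}$. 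The implicit function theorem then produces an open neighborhood $N$ of $(\phi_{0},\delta_{0})$ and a $C^{1}$ map $S:N\rightarrow C(\Omega)$ with $S(\phi_{0},\delta_{0})=\sigma_{0}$ and $G(\phi,\delta,S(\phi,\delta))=0$ on $N$. To conclude I would verify that $N\subset D(\widehat{\tau})$ and $S=\widehat{\tau}$ there: for $(\phi,\delta)\in N$ and each $x$, the value $S(\phi,\delta)(x)$ is the unique real root of $\sigma\mapsto\int_{-\sigma}^{0}f(\overline{\phi}(s,.))(x)\,ds=\delta(x)$, so it coincides with $\widehat{\tau}(\phi,\delta)(x)$ wherever the latter is defined; and if $\delta(x)>0$ then $S(\phi,\delta)(x)>0$ and, since $f>0$,
\begin{equation*}
\delta(x)=\int_{-S(\phi,\delta)(x)}^{0}f(\phi(s,.))(x)\,ds<\int_{-\infty}^{0}f(\phi(s,.))(x)\,ds,
\end{equation*}
which is precisely the membership condition for $D(\widehat{\tau})$. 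Hence $N\subset D(\widehat{\tau})$ and $\widehat{\tau}=S$ is $C^{1}$ on $N$. As $(\phi_{0},\delta_{0})$ was arbitrary, this shows simultaneously that $D(\widehat{\tau})$ is open and that $\widehat{\tau}$ is continuously differentiable on $D(\widehat{\tau})$.
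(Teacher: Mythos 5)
Your proposal is correct and follows essentially the same route as the paper: define the implicit map $\Gamma(\phi,\delta,\gamma)(x)=\int_{-\gamma(x)}^{0}f(\overline{\phi}(s,.))(x)\,ds-\delta(x)$, observe that $\partial_{\gamma}\Gamma$ is multiplication by the strictly positive function $f(\overline{\phi}(-\gamma(x),.))(x)$ and hence invertible on $C(\Omega)$, and apply the implicit function theorem. Your write-up is more detailed than the paper's (which asserts $\Gamma\in C^{1}$ and the conclusion in one line), notably in justifying the Fr\'echet differentiability of $\Gamma$ and in verifying that the local implicit solution coincides with $\widehat{\tau}$ and that its neighborhood of definition lies in $D(\widehat{\tau})$, but the underlying argument is the same.
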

\begin{proof}
Define the map $\Gamma:BUC_{\alpha }\times C(\Omega)\times C(\Omega)\rightarrow C(\Omega)$ by
\begin{equation*}
\Gamma(\phi,\delta,\gamma )(x):=\int_{-\gamma(x)}^{0}f(\overline{\phi}(s,.))(x)ds-\delta(x).
\end{equation*}
Since by Assumption \ref{ASS1.1} the map $f$ is $C^1$, so is the map $\Gamma$. By \eqref{EQ3.3}, we have $\Gamma (\phi,\delta,\widehat{\tau}(\phi,\delta))(x)=0$  and
\begin{equation*}
\partial_{\gamma} \Gamma(\phi,\delta,\gamma)(\hat{\gamma})(x)=f(\overline{\phi} (-\gamma(x),.) \hat{\gamma}(x).
\end{equation*}
Since $f$ is strictly positive, it follows that $\partial_{\gamma} \Gamma(\phi,\delta,\gamma)$ is invertible. The result follows by applying the implicit function theorem. 
\end{proof}
\begin{lemma}\label{LE3.6} 
Set 
\begin{equation*}
\delta_{0}(x):=\int_{-\tau _{0}(x)}^{0}f(\varphi (s,.))(x)ds,\forall x\in\Omega.
\end{equation*}
Then we have the following equality 
\begin{equation*}
\widehat{\tau}(A_{t},\delta_{0})(x)=\tau (t,x),\forall t\in (0,r),
\end{equation*}
where $\tau (t,x)$ is the solution of (\ref{EQ3.1}).
\end{lemma}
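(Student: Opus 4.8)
The plan is to show that the two maps $t \mapsto \widehat{\tau}(A_t,\delta_0)(x)$ and $t \mapsto \tau(t,x)$ satisfy the same defining integral equation, and then invoke the uniqueness already established in Step 1 of Lemma \ref{LE3.2}. First I would fix $t \in (0,r)$ and $x \in \Omega$ and unwind the definition of $A_t$: by the notation introduced before Definition \ref{DE1.4} we have $A_t(\theta,x) = A(t+\theta,x)$ for all $\theta \leqslant 0$. The key point is to relate the extended function $\overline{A_t}$ appearing in \eqref{EQ3.3} to the original solution $A$. Since $t \in (0,r)$, for $\theta \in [-t,0]$ the value $A_t(\theta,x) = A(t+\theta,x)$ is the genuine solution value at a time $t+\theta \in [0,t]$, so no truncation is needed on the relevant range provided the delay does not exceed $t$; when the delay is larger than $t$ one reaches back into the history $\varphi$, which is exactly where $A_t(\theta,\cdot) = A(t+\theta,\cdot) = \varphi(t+\theta,\cdot)$ for $t+\theta \leqslant 0$.

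Next I would substitute $\widehat{\tau}(A_t,\delta_0)(x)$ into the defining equation \eqref{EQ3.3} with $\delta = \delta_0$. By definition of $\widehat{\tau}$ this reads
\begin{equation*}
\int_{-\widehat{\tau}(A_t,\delta_0)(x)}^{0} f(\overline{A_t}(\sigma,.))(x)\, d\sigma = \delta_0(x) = \int_{-\tau_0(x)}^{0} f(\varphi(s,.))(x)\, ds.
\end{equation*}
The central computation is the change of variable $s = t + \sigma$ (equivalently $\sigma = s - t$) in the left-hand integral, which transforms $\int_{-\widehat{\tau}(A_t,\delta_0)(x)}^{0}$ into $\int_{t-\widehat{\tau}(A_t,\delta_0)(x)}^{t}$ and replaces $\overline{A_t}(\sigma,x)$ by $A(s,x)$ (using $A_t(\sigma,x)=A(t+\sigma,x)$ and the fact that $\overline{A_t}$ coincides with $A_t$ on $(-\infty,0]$, since the argument $\sigma$ is nonpositive throughout the integration range). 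This yields
\begin{equation*}
\int_{t-\widehat{\tau}(A_t,\delta_0)(x)}^{t} f(A(s,.))(x)\, ds = \int_{-\tau_0(x)}^{0} f(\varphi(s,.))(x)\, ds,
\end{equation*}
which is precisely equation \eqref{EQ3.1} defining $\tau(t,x)$. Since Step 1 of Lemma \ref{LE3.2} guarantees that the solution of \eqref{EQ3.1} in the admissible range is unique, I conclude $\widehat{\tau}(A_t,\delta_0)(x) = \tau(t,x)$.

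The main obstacle I anticipate is a bookkeeping issue rather than a deep difficulty: one must verify that $(A_t,\delta_0) \in D(\widehat{\tau})$ so that $\widehat{\tau}(A_t,\delta_0)$ is well defined, and that the change of variable stays within the region where $\overline{A_t} = A_t$ (i.e. where the argument $\sigma$ remains $\leqslant 0$). Membership in the domain follows because $\tau(t,x) \in [0,t+\tau_0(x)]$ is finite by Step 1 of Lemma \ref{LE3.2}, which bounds the delay and ensures the history integral exceeds $\delta_0(x)$ whenever $\delta_0(x) > 0$. The truncation $\overline{\phi}$ only activates for positive arguments, and since the integration variable $\sigma$ runs over $[-\widehat{\tau}(A_t,\delta_0)(x),0] \subset (-\infty,0]$, the extension plays no role here; I would simply note this to justify replacing $\overline{A_t}$ by $A_t$ cleanly. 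Once these domain and range checks are recorded, the identity follows immediately from uniqueness.
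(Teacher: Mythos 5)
Your proposal is correct and follows essentially the same route as the paper: the paper's proof is the one-line observation that $\int_{-\widehat{\tau}(A_{t},\delta_{0})(x)}^{0}f(A_{t}(s,.))(x)ds=\int_{t-\widehat{\tau}(A_{t},\delta_{0})(x)}^{t}f(A(s,.))(x)ds=\delta_{0}(x)$, i.e.\ the same change of variables showing $\widehat{\tau}(A_t,\delta_0)(x)$ satisfies \eqref{EQ3.1}, with uniqueness from Lemma \ref{LE3.2} doing the rest. Your additional checks (that $(A_t,\delta_0)\in D(\widehat{\tau})$ and that the truncation $\overline{A_t}$ is inactive because $\delta_0\geqslant 0$ forces $\widehat{\tau}\geqslant 0$) are correct bookkeeping that the paper leaves implicit.
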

\begin{proof}
It is sufficient to observe that 
\begin{equation*}
\int_{-\widehat{\tau}(A_{t},\delta_{0})(x)}^{0}f(A_{t}(s,.))(x)ds=\int_{t-\widehat{\tau}(A_{t},\delta_{0})(x)}^{t}f(A(s,.))(x)ds=\delta_{0}(x).
\end{equation*}
\end{proof}

For simplicity, in case the function $\delta _{0}$ is not necessarily specified, we will write $\tau (\phi,x)$ instead of $\widehat{\tau}(\phi,\delta_{0})(x)$.

\begin{lemma}\label{LE3.7}
Let $\phi ,\widehat{\phi}\in BUC_{\alpha }$. Then 
\begin{equation*}
\phi \leqslant \widehat{\phi}\Rightarrow \tau (\phi ,x)\leqslant \tau (\widehat{\phi},x),\forall x\in\Omega.
\end{equation*}
\end{lemma}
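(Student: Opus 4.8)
The plan is to prove the inequality pointwise in $x$, reducing everything to the defining integral equation \eqref{EQ3.3} for $\widehat{\tau}$ and exploiting the one structural hypothesis that matters here, namely that $f$ is monotone non-increasing (Assumption \ref{ASS1.1}). Recall that $\tau(\phi,x)=\widehat{\tau}(\phi,\delta_{0})(x)$, where $\delta_{0}$ is the fixed function from Lemma \ref{LE3.6}; in particular $\delta_{0}(x)=\int_{-\tau_{0}(x)}^{0}f(\varphi(s,.))(x)\,ds\geqslant 0$ since $f>0$ and $\tau_{0}\geqslant 0$. This nonnegativity guarantees that both $\tau(\phi,x)$ and $\tau(\widehat{\phi},x)$ are nonnegative, which (as I explain below) is exactly what makes the comparison run in the asserted direction.

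Fix $x\in\Omega$ and introduce the two scalar functions
\[
g(\tau):=\int_{-\tau}^{0}f(\overline{\phi}(s,.))(x)\,ds,\qquad \widehat{g}(\tau):=\int_{-\tau}^{0}f(\overline{\widehat{\phi}}(s,.))(x)\,ds,\qquad \tau\geqslant 0.
\]
Since $f>0$, both $g$ and $\widehat{g}$ are strictly increasing. The key pointwise estimate comes from the hypothesis $\phi\leqslant\widehat{\phi}$: the truncated extensions inherit the order, $\overline{\phi}(s,x)\leqslant\overline{\widehat{\phi}}(s,x)$ for every $s$ (including $s\geqslant 0$, where they equal the respective values at $0$), so the monotone non-increasing property of $f$ gives $f(\overline{\phi}(s,.))(x)\geqslant f(\overline{\widehat{\phi}}(s,.))(x)$ for all $s$. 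Integrating over $[-\tau,0]$ with $\tau\geqslant 0$ yields $g(\tau)\geqslant\widehat{g}(\tau)$ for every $\tau\geqslant 0$.

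I would then combine the monotonicity of $g$ with this comparison. By definition $g(\tau(\phi,x))=\delta_{0}(x)=\widehat{g}(\tau(\widehat{\phi},x))$, and since $g\geqslant\widehat{g}$ on $[0,\infty)$,
\[
g(\tau(\phi,x))=\delta_{0}(x)=\widehat{g}(\tau(\widehat{\phi},x))\leqslant g(\tau(\widehat{\phi},x)).
\]
Because $g$ is strictly increasing this forces $\tau(\phi,x)\leqslant\tau(\widehat{\phi},x)$, which is the claim. The degenerate case $\delta_{0}(x)=0$ is immediate, as both delays then vanish.

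The argument is essentially careful bookkeeping of inequality directions, and the one genuine point to watch — which I would flag explicitly — is the sign of $\delta_{0}$. The comparison $g\geqslant\widehat{g}$ holds only for $\tau\geqslant 0$; for $\tau<0$ one integrates over $[-\tau,0]$ using the constant value $\phi(0,x)$, the inequality reverses, and the conclusion would flip. Thus the crux is not any delicate estimate but the observation that the relevant $\delta_{0}$ is nonnegative, placing both delays in the regime $\tau\geqslant 0$ where the intuition ``$\phi$ larger $\Rightarrow f$ smaller $\Rightarrow$ a longer interval is needed to accumulate $\delta_{0}$ $\Rightarrow$ larger delay'' is valid.
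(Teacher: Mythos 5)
Your proof is correct and rests on exactly the same two ingredients as the paper's: the pointwise comparison $f(\overline{\phi}(s,.))(x)\geqslant f(\overline{\widehat{\phi}}(s,.))(x)$ coming from $\phi\leqslant\widehat{\phi}$ and the monotonicity of $f$, together with the strict positivity of $f$ making the integral strictly increasing in the delay variable. The paper phrases this as a proof by contradiction via a decomposition of the difference of the two integrals, whereas you argue directly through the monotone functions $g,\widehat{g}$ and explicitly record that $\delta_{0}\geqslant 0$ keeps both delays in the regime where the comparison holds — a purely presentational difference.
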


\begin{proof}
Fix $x\in\Omega$, by (\ref{EQ3.3}) we have 
\begin{equation*}
\int_{-\tau (\phi ,x)}^{0}f(\phi(s,.))(x)ds=\delta _{0}(x)=\int_{-\tau (\widehat{\phi},x)}^{0}f(\widehat{\phi}(s,.))(x)ds.
\end{equation*}
Assume by contradiction that there exists $x\in\Omega$ such that $\tau (\phi ,x)>\tau (\widehat{\phi},x)$, then 
\begin{equation*}
\begin{split}
& \int_{-\tau (\phi ,x)}^{0}f(\phi (s,.))(x)ds-\int_{-\tau (\widehat{\phi },x)}^{0}f(\widehat{\phi}(s,.))(x)ds \\
= & \int_{-\tau (\phi,x)}^{-\tau (\widehat{\phi },x)}f(\phi(s,.))(x)ds+\int_{-\tau (\widehat{\phi},x)}^{0}[f(\phi(s,.))-f(\widehat{\phi}(s,.))](x)ds=0.
\end{split}%
\end{equation*}
As we have $f(\phi (s,.))(x)\geqslant f(\widehat{\phi}(s,.))(x)$, then 
\begin{equation*}
\int_{-\tau (\widehat{\phi},x)}^{0}[f(\phi (s,.))-f(\widehat{\phi}(s,.))](x)ds\geqslant 0.
\end{equation*}
Since by assumption $f(\phi(s,.))(x)>0$, we must have 
\begin{equation*}
\int_{-\tau (\phi ,x)}^{-\tau (\widehat{\phi},x)}f(\phi (s,.))(x)ds\leqslant 0,
\end{equation*}
which contradicts the fact that $-\tau (\phi ,x)<-\tau (\widehat{\phi},x)$. 
\end{proof}

%\begin{lemma}\label{LE3.8} 
%The map $t\mapsto t-\tau (t,x)=t-\tau (A_{t},x)$ is increasing with respect to $t$ from $\left[ 0,r\right) $ into $\mathbb{R}$.
%\end{lemma}
%
%\begin{proof}
%It is sufficient to observe that $t\mapsto \tau (t,x)$ satisfies (\ref{EQ3.2}), therefore  
%\begin{equation*}
%\frac{\partial}{\partial t}(t-\tau (t,x))=\frac{f(A(t,.))(x)}{f(A(t-\tau (t,x),.))(x)}>0, \forall t\in [0,r),
%\end{equation*}
%the result follows. 
%\end{proof}

In the following lemma we obtain some a priori estimates for the delay.
\begin{lemma}\label{LE3.8}
Assume that there exists a constant $M>0$ such that
\begin{equation}  \label{EQ3.4}
\sup_{t \in [0,r)}\Vert A(t,.)\Vert _{\infty } \leqslant M.
\end{equation}
Then 
\begin{equation*}
\tau _{\min }\leqslant \tau (A_{t},x)\leqslant \tau _{\max }, \forall t\in [0,r), \forall x\in\Omega, 
\end{equation*}
where the constants $\tau _{\min }$ and $\tau _{\max }$ are defined as follows:
\begin{equation*}
0\leqslant \tau _{\min }:=\frac{\displaystyle \inf_{x\in\Omega} [\tau _{0}(x)f(\varphi _{\mathrm{max}})(x)]}{\sup\limits _{x\in\Omega}f(-M_1)(x)} \leqslant \tau _{\max }:=\frac{\displaystyle \sup_{x\in\Omega} [\tau _{0}(x)f(-\varphi _{\mathrm{max}})(x)]}{\inf\limits _{x\in\Omega}f(M_{1})(x)} 
\end{equation*}
with
\begin{equation}\label{EQ3.5}
\begin{array}{l}
M_{1}:=\max\{ M,\varphi _{\mathrm{max}}\} \text{ and } \varphi _{\mathrm{max}}:=\sup\limits_{t\in [ -\tau _{0}^{\infty},0]}\Vert \varphi (t,.)\Vert _{\infty }
\end{array}
\end{equation}
and 
$$
\tau _{0}^{\infty}:=\sup\limits_{x\in\Omega}\tau _{0}(x).
$$ 
\end{lemma}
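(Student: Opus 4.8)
The plan is to work directly with the defining integral equation \eqref{EQ3.3} for $\tau(A_t,x)$. By Lemma \ref{LE3.6} we have $\tau(A_t,x)=\widehat{\tau}(A_t,\delta_0)(x)=\tau(t,x)$, so that
\begin{equation*}
\int_{-\tau(A_t,x)}^{0}f(A_t(s,.))(x)\,ds=\delta_0(x)=\int_{-\tau_0(x)}^{0}f(\varphi(s,.))(x)\,ds,
\end{equation*}
and the idea is to sandwich both the left-hand integrand and the right-hand quantity $\delta_0(x)$ using only the positivity and the monotone non-increasing character of $f$ from Assumption \ref{ASS1.1}. Since $f$ is strictly positive, every denominator appearing below is positive, so the divisions are legitimate.

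First I would pin down the range of the integrand. For $s\in[-\tau(A_t,x),0]$ one has $t+s\in[t-\tau(t,x),t]$, and by Lemma \ref{LE3.2} (Step 1) $\tau(t,x)\leqslant t+\tau_0(x)$, so $t+s\geqslant -\tau_0(x)\geqslant -\tau_0^\infty$. Splitting according to the sign of $t+s$: where $t+s\in[-\tau_0^\infty,0]$ the value $A_t(s,x)=\varphi(t+s,x)$ is bounded by $\varphi_{\max}$, and where $t+s\in[0,t]$ the value $A_t(s,x)=A(t+s,x)$ is bounded by $M$ via \eqref{EQ3.4}. Hence $-M_1\leqslant A_t(s,x)\leqslant M_1$ with $M_1=\max\{M,\varphi_{\max}\}$ as in \eqref{EQ3.5}. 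By the monotonicity of $f$ this yields, pointwise in $x$, $f(M_1)(x)\leqslant f(A_t(s,.))(x)\leqslant f(-M_1)(x)$, and integrating over the interval of length $\tau(A_t,x)$ gives
\begin{equation*}
\tau(A_t,x)\,f(M_1)(x)\leqslant \delta_0(x)\leqslant \tau(A_t,x)\,f(-M_1)(x).
\end{equation*}

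Next I would bound $\delta_0(x)$ in the same spirit. For $s\in[-\tau_0(x),0]\subset[-\tau_0^\infty,0]$ we have $-\varphi_{\max}\leqslant\varphi(s,x)\leqslant\varphi_{\max}$, so monotonicity of $f$ and integration over the interval of length $\tau_0(x)$ give
\begin{equation*}
\tau_0(x)\,f(\varphi_{\max})(x)\leqslant\delta_0(x)\leqslant\tau_0(x)\,f(-\varphi_{\max})(x).
\end{equation*}
Combining the two chains and dividing yields, for every $t\in[0,r)$ and $x\in\Omega$,
\begin{equation*}
\frac{\tau_0(x)\,f(\varphi_{\max})(x)}{f(-M_1)(x)}\leqslant\tau(A_t,x)\leqslant\frac{\tau_0(x)\,f(-\varphi_{\max})(x)}{f(M_1)(x)}.
\end{equation*}
Finally I would pass to the uniform bounds by taking the infimum of the numerator and the supremum of the denominator on the left (to produce $\tau_{\min}$) and the supremum of the numerator and the infimum of the denominator on the right (to produce $\tau_{\max}$); these are finite and the denominators are bounded away from $0$ because $\Omega$ is compact and the maps $x\mapsto\tau_0(x)$, $x\mapsto f(\pm\varphi_{\max})(x)$, $x\mapsto f(\pm M_1)(x)$ are continuous and strictly positive.

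I expect no serious obstacle here: the estimates are elementary once monotonicity is invoked. The one point requiring care is the bookkeeping of the integration window — checking that $s\in[-\tau(A_t,x),0]$ forces $t+s\geqslant -\tau_0^\infty$, so that $\varphi_{\max}$ genuinely controls the $\varphi$-part and $M_1$ is the correct uniform bound — together with keeping the two opposite directions of the non-increasing inequality consistent, which in turn dictates the correct choice of supremum versus infimum when uniformizing over $x$.
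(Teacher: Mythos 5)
Your proof is correct and follows essentially the same route as the paper: both start from the identity $\int_{-\tau_0(x)}^{0}f(\varphi(s,.))(x)\,ds=\int_{t-\tau(A_t,x)}^{t}f(A(l,.))(x)\,dl$, use the fact that $t\mapsto t-\tau(t,x)$ stays above $-\tau_0(x)$ to confine the integration window, bound the integrand between $f(M_1)(x)$ and $f(-M_1)(x)$ via the monotonicity of $f$, bound $\delta_0(x)$ between $\tau_0(x)f(\varphi_{\max})(x)$ and $\tau_0(x)f(-\varphi_{\max})(x)$, and then uniformize over $x$. The only cosmetic difference is that the paper writes out the lower bound and declares the upper bound ``similar,'' whereas you carry both directions symmetrically.
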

\begin{proof}
For any $x\in\Omega$ and $t\in [0,r)$, since by Assumption \ref{ASS1.1}, the map $f$ is decreasing, and by Lemma \ref{LE3.2} the map $t \mapsto t-\tau (A_{t},x)$ is increasing, therefore it follows that  
\begin{equation*}
\begin{split}
\int_{-\tau _{0}(x)}^{0}f(\varphi (s,.))(x)ds = & \int_{-\tau (A_{t},x)}^{0}f( A(t+s,.))(x)ds \\
= & \int_{t-\tau (A_{t},x)}^{t}f( A(l,.))(x)dl \\
\leqslant & \int_{t-\tau (A_{t},x)}^{t}f(-M_1)(x)ds =\tau (A_{t},x)\sup_{x\in\Omega}f(-M_1)(x).
\end{split}
\end{equation*}
Then $\forall x\in\Omega$, $\forall t\in [0,r)$,
\begin{equation*}
\tau (A_{t},x) \geqslant \frac{\displaystyle\int_{-\tau _{0}(x)}^{0}f(\varphi(s,.))(x)ds}{\sup\limits _{x\in\Omega}f(-M_1)(x)} \geqslant \frac{\inf\limits _{x\in\Omega}[\tau _{0}(x)f(\varphi _{\mathrm{max}})(x)]}{\sup\limits _{x\in\Omega}f(-M_1)(x)}.
\end{equation*}
The derivation of the estimation from above for $\tau(t,x)$ is similar. 
\end{proof}
\begin{lemma}\label{LE3.9}
Let $M_{0}>0$ be fixed. Let $\varphi ,\tilde{\varphi}\in \mathrm{Lip}_{\alpha}$ and $\tau _{0},\tilde{\tau}_{0}\in C_{+}(\Omega)$ satisfy 
\begin{equation*}
\Vert \varphi \Vert _{\mathrm{Lip}_{\alpha}}+\Vert \tau _{0}\Vert _{\infty }\leqslant M_{0} \text{ and }\Vert \tilde{\varphi}\Vert _{\mathrm{Lip}_{\alpha}}+\Vert \tilde{\tau}_{0}\Vert _{\infty }\leqslant M_{0}.
\end{equation*}
Let $r \in (0,+\infty]$ be fixed. Let $A,\tilde{A}\in C((-\infty ,r),C(\Omega))$ be such that 
\begin{equation*}
A(t,x)=\varphi(t,x), \tilde{A}(t,x)=\tilde{\varphi}(t,x),\forall (t,x) \in (-\infty,0] \times \Omega.
\end{equation*}
Assume that 
\begin{equation*}
M:=\max \left\{ \sup_{t\in[0,r)}\left\Vert A(t,.)\right\Vert _{\infty},\sup_{t\in[0,r)}\Vert \tilde{A}(t,.)\Vert _{\infty }\right\} < +\infty.
\end{equation*}
Then there exists a constant $L_{\tau }>0$ such that $\forall t\in [0,r)$, $\forall x\in\Omega$, 
\begin{equation*}
\vert \widehat{\tau}(A_{t},\delta _{0})(x)-\widehat{\tau}(\tilde{A}_{t},\tilde{\delta}_{0})(x)\vert \leqslant L_{\tau } \left[\sup_{s \in [-\bar{\tau}_{0}^{\infty},r)}\Vert A(s,.)-\tilde{A}(s,.)\Vert _{\infty}+\Vert \delta _{0}-\tilde{\delta}_{0}\Vert _{\infty}\right],
\end{equation*}
where $\delta _{0}$ and $\tilde{\delta}_{0}$ are defined as in Lemma \ref{LE3.6} respectively with $(\varphi ,\tau _{0})$ and $(\tilde{\varphi},\tilde{\tau}_{0})$ and
\begin{equation*}
\bar{\tau}_{0}^{\infty}:=\max\left\{\sup _{x\in\Omega}\tau _{0}(x),\sup _{x\in\Omega}\tilde{\tau} _{0}(x)\right\}.
\end{equation*}
\end{lemma}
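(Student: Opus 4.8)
The plan is to compare the two defining integral equations (\ref{EQ3.3}) for the two delays directly, exploiting the a priori bounds already established. Write $\tau_{1}:=\widehat{\tau}(A_{t},\delta_{0})(x)$ and $\tau_{2}:=\widehat{\tau}(\tilde{A}_{t},\tilde{\delta}_{0})(x)$; both are nonnegative, since $\tau_{0},\tilde{\tau}_{0}\in C_{+}(\Omega)$ force $\delta_{0},\tilde{\delta}_{0}\geqslant 0$. Because $s\leqslant 0$ throughout the relevant integrals, the truncation $\overline{(\cdot)}$ is inactive and $\overline{(A_{t})}(s,.)=A(t+s,.)$. Lemma \ref{LE3.8} applies to each system (its hypothesis (\ref{EQ3.4}) holds with the common bound $M$), and since $\varphi_{\mathrm{max}},\tilde{\varphi}_{\mathrm{max}}\leqslant e^{\alpha M_{0}}M_{0}$ by the $M_{0}$ bound on the $\mathrm{Lip}_{\alpha}$ norms, the constants $M_{1}$ and $\tau_{\max}$ may be chosen uniformly over both systems. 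In particular $0\leqslant \tau_{1},\tau_{2}\leqslant \tau_{\max}$, which supplies the uniform upper bound on the length of integration.

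First I would subtract the two equations (\ref{EQ3.3}). Assuming without loss of generality that $\tau_{1}\geqslant \tau_{2}$, I would split $\int_{-\tau_{1}}^{0}=\int_{-\tau_{1}}^{-\tau_{2}}+\int_{-\tau_{2}}^{0}$ and isolate the extremal piece:
\begin{equation*}
\int_{-\tau_{1}}^{-\tau_{2}} f(A(t+s,.))(x)\,ds = (\delta_{0}(x)-\tilde{\delta}_{0}(x)) - \int_{-\tau_{2}}^{0} \bigl[f(A(t+s,.))(x)-f(\tilde{A}(t+s,.))(x)\bigr]\,ds.
\end{equation*}
Two estimates then close the argument. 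For the lower bound on the left-hand side, note that for $s\in[-\tau_{1},0]$ one has $\Vert A(t+s,.)\Vert_{\infty}\leqslant M_{1}$, so the monotonicity and strict positivity of $f$ (Assumption \ref{ASS1.1}) give $f(A(t+s,.))(x)\geqslant \inf_{x\in\Omega}f(M_{1})(x)>0$; hence the left-hand side is at least $(\tau_{1}-\tau_{2})\inf_{x}f(M_{1})(x)$. For the right-hand side, the first term is bounded by $\Vert\delta_{0}-\tilde{\delta}_{0}\Vert_{\infty}$, while the Lipschitz continuity of $f$ together with $\tau_{2}\leqslant \tau_{\max}$ bounds the second term by $\tau_{\max}\Vert f\Vert_{\mathrm{Lip}}\sup_{s}\Vert A(t+s,.)-\tilde{A}(t+s,.)\Vert_{\infty}$.

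The key bookkeeping point is that the argument $l=t+s$ ranges only over $[-\bar{\tau}_{0}^{\infty},r)$: indeed $t+s\leqslant t<r$, and $t+s\geqslant t-\tau_{2}\geqslant -\tilde{\tau}_{0}(x)\geqslant -\bar{\tau}_{0}^{\infty}$ by Step 2 of Lemma \ref{LE3.2}. Consequently the single supremum $\sup_{l\in[-\bar{\tau}_{0}^{\infty},r)}\Vert A(l,.)-\tilde{A}(l,.)\Vert_{\infty}$ controls both integrals uniformly in $t$ and $x$. Dividing through by $\inf_{x}f(M_{1})(x)$ and setting $L_{\tau}:=\max\{1,\tau_{\max}\Vert f\Vert_{\mathrm{Lip}}\}/\inf_{x}f(M_{1})(x)$ yields the claimed inequality.

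I do not expect a genuine obstacle here; the only points requiring care are (a) verifying that the truncated function $\overline{(A_{t})}$ coincides with $A(t+\cdot,.)$ on the range of integration, so that the final estimate can be phrased in terms of $A$ rather than its truncation, and (b) checking that $t+s$ stays in $[-\bar{\tau}_{0}^{\infty},r)$ so that a single supremum suffices. The uniform positive lower bound $\inf_{x}f(M_{1})(x)$ is exactly what keeps the constant $L_{\tau}$ from degenerating, and it is available precisely because $A$ and $\tilde{A}$ are bounded by $M$.
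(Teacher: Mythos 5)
Your proposal is correct and follows essentially the same route as the paper: both subtract the two defining integral identities, split off the piece over $[-\tau_{1},-\tau_{2}]$, bound it below by $(\tau_{1}-\tau_{2})\inf_{x}f(M_{1})(x)$ using the monotonicity and strict positivity of $f$, and bound it above by $\Vert\delta_{0}-\tilde{\delta}_{0}\Vert_{\infty}+\bar{\tau}_{\max}\Vert f\Vert_{\mathrm{Lip}}\sup_{s}\Vert A(s,.)-\tilde{A}(s,.)\Vert_{\infty}$ via Lemma \ref{LE3.8}. Your extra bookkeeping (the truncation being inactive and the range of $t+s$ lying in $[-\bar{\tau}_{0}^{\infty},r)$) is consistent with, and slightly more explicit than, the paper's argument.
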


\begin{proof}
Let be $t\in [0,r)$ and $x\in\Omega$. Recall from Lemma \ref{LE3.6} that
\begin{equation*}
\delta _{0}(x)=\int _{-\tau _{0}(x)}^{0}f(\varphi (s,.))(x)ds \text{ and }\tilde{\delta}_{0}(x)=\int _{-\tilde{\tau}_{0}(x)}^{0}f(\tilde{\varphi}(s,.))(x)ds,\forall x\in\Omega.
\end{equation*}
Without loss of generality we may assume that $\widehat{\tau}(A_{t},\delta _{0})(x)>\widehat{\tau}(\tilde{A}_{t},\tilde{\delta}_{0})(x)>0$. Then we have 
\begin{eqnarray*}
& & \delta _{0}(x)-\tilde{\delta}_{0}(x)=\int _{-\tau _{0}(x)}^{0}f(\varphi (s,.))(x)ds -\int _{-\tilde{\tau}_{0}(x)}^{0}f(\tilde{\varphi}(s,.))(x)ds \\
&=& \int _{t-\widehat{\tau}(A_{t},\delta _{0})(x)}^{t}f(A(s,.))(x)ds-\int _{t-\widehat{\tau}(\tilde{A}_{t},\tilde{\delta}_{0})(x)}^{t}f(\tilde{A}(s,.))(x)ds \\
&=& \int _{t-\widehat{\tau}(A_{t},\delta _{0})(x)}^{t-\widehat{\tau}(\tilde{A}_{t},\tilde{\delta}_{0})(x)}f(A(s,.))(x)ds+\int _{t-\widehat{\tau}(\tilde{A}_{t},\tilde{\delta}_{0})(x)}^{t}\left[ f(A(s,.))(x)-f(\tilde{A}(s,.))(x)\right] ds.
\end{eqnarray*}
Since by Assumption \ref{ASS1.1}, $f$ is Lipschitz continuous, then
\begin{eqnarray*}
& & \int _{t-\widehat{\tau}(A_{t},\delta _{0})(x)}^{t-\widehat{\tau}(\tilde{A}_{t},\tilde{\delta}_{0})(x)}f(A(s,.))(x)ds \\
&=& \int _{t-\widehat{\tau}(\tilde{A}_{t},\tilde{\delta}_{0})(x)}^{t}\left[f(\tilde{A}(s,.))(x)-f(A(s,.))(x)\right] ds + (\delta _{0}(x)-\tilde{\delta}_{0}(x)) \\
& \leqslant & \Vert f\Vert _{\mathrm{Lip}}\int _{t-\widehat{\tau}(\tilde{A}_{t},\tilde{\delta}_{0})(x)}^{t}\Vert\tilde{A}(s,.)-A(s,.)\Vert _{\infty}ds+ \Vert\delta _{0}-\tilde{\delta}_{0}\Vert _{\infty} \\
& \leqslant & \bar{\tau}_{\max }\Vert f\Vert _{\mathrm{Lip}}\sup _{s\in [-\bar{\tau}_{0}^{\infty},r)}\Vert A(s,.)-\tilde{A}(s,.)\Vert _{\infty}+ \Vert\delta _{0}-\tilde{\delta}_{0}\Vert _{\infty}
\end{eqnarray*}
where $\bar{\tau}_{\max }:=\max\{\tau _{\max},\tilde{\tau}_{\max}\}$ and $\tau _{\max},\tilde{\tau}_{\max}$ are obtained in Lemma \ref{LE3.8}. On the other hand, we have 
\begin{equation*}
\int _{t-\widehat{\tau}(A_{t},\delta _{0})(x)}^{t-\widehat{\tau}(\tilde{A}_{t},\tilde{\delta}_{0})(x)}f(A(s,.))(x)ds \geqslant \left(\widehat{\tau}(A_{t},\delta _{0})(x)-\widehat{\tau}(\tilde{A}_{t},\tilde{\delta}_{0})(x)\right)\inf _{x\in\Omega}f(M_1)(x)
\end{equation*}
where $M_{1}:=\max \{\varphi _{\mathrm{max}}, \tilde{\varphi}_{\mathrm{max}},M \}$ and $\varphi _{\mathrm{max}},\tilde{\varphi}_{\mathrm{max}}$ are defined in \eqref{EQ3.5} respectively with $\varphi ,\tilde{\varphi}$. The result follows. 
\end{proof}

\section{Existence and uniqueness of solutions}
We start this section with two technical lemmas. 
\begin{lemma}\label{LE4.1}
Let $a<b$ be two real numbers. Let $\chi \in \mathrm{Lip}([a,b], C(\Omega))$. Then for each $ c\in (a,b)$ we have the following estimation 
\begin{equation*}
\Vert\chi\Vert _{\mathrm{Lip}([a,b],C(\Omega))}\leqslant \Vert\chi\Vert _{\mathrm{Lip}([a,c],C(\Omega))}+\Vert\chi\Vert _{\mathrm{Lip}([c,b],C(\Omega))}.
\end{equation*}
where 
$$
\Vert\chi\Vert _{\mathrm{Lip}(I,C(\Omega))}:=\sup_{t,s \in I:t \neq s} \frac{\Vert \chi(t,.)-\chi(s,.) \Vert_{\infty} }{\vert t-s \vert}. 
$$
\end{lemma}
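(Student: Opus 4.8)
The plan is to bound the full Lipschitz seminorm by estimating a single difference quotient and then taking the supremum. Write $L_1 := \Vert\chi\Vert _{\mathrm{Lip}([a,c],C(\Omega))}$ and $L_2 := \Vert\chi\Vert _{\mathrm{Lip}([c,b],C(\Omega))}$. Since $\Vert\chi\Vert _{\mathrm{Lip}([a,b],C(\Omega))}$ is by definition the supremum over all pairs $t\neq s$ in $[a,b]$ of the quotient $\Vert\chi(t,.)-\chi(s,.)\Vert_{\infty}/\vert t-s\vert$, it suffices to show that each such quotient is at most $L_1+L_2$. I would fix an arbitrary pair $t\neq s$ and distinguish cases according to the position of $t$ and $s$ relative to the splitting point $c$.

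First I would dispose of the two easy cases. If both $t,s\in[a,c]$, the quotient is by definition bounded by $L_1\leqslant L_1+L_2$ (the seminorms being nonnegative); symmetrically, if both $t,s\in[c,b]$, it is bounded by $L_2\leqslant L_1+L_2$. The only substantive case is the crossing one, say $t<c<s$ (the reverse is symmetric). Here I would insert the intermediate point $c$ and apply the triangle inequality in $C(\Omega)$,
\begin{equation*}
\Vert\chi(t,.)-\chi(s,.)\Vert_{\infty}\leqslant \Vert\chi(t,.)-\chi(c,.)\Vert_{\infty}+\Vert\chi(c,.)-\chi(s,.)\Vert_{\infty}\leqslant L_1(c-t)+L_2(s-c),
\end{equation*}
using that $[t,c]\subset[a,c]$ and $[c,s]\subset[c,b]$. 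Dividing by $\vert t-s\vert=(c-t)+(s-c)$ then exhibits the quotient as a convex combination of $L_1$ and $L_2$,
\begin{equation*}
\frac{\Vert\chi(t,.)-\chi(s,.)\Vert_{\infty}}{\vert t-s\vert}\leqslant \frac{L_1(c-t)+L_2(s-c)}{(c-t)+(s-c)}\leqslant \max\{L_1,L_2\}\leqslant L_1+L_2.
\end{equation*}

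Thus in every case the difference quotient is bounded by $L_1+L_2$, and taking the supremum over all admissible pairs $t\neq s$ yields the claimed inequality. I do not expect a genuine obstacle here: the argument is elementary, resting only on the triangle inequality and the observation that a weighted average of two nonnegative numbers is dominated by their maximum. The one point worth stating carefully is that the weights $(c-t)$ and $(s-c)$ are both strictly positive in the crossing case, so the convex-combination bound is legitimate; the degenerate situations where $t=c$ or $s=c$ fall back into one of the two easy same-side cases and need no separate treatment.
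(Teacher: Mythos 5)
Your proof is correct, and it takes a genuinely more elementary route than the paper. The paper parametrizes the segment from $s$ to $t$ by a function $\rho(h):=\Vert\chi((t-s)h+s,.)-\chi(s,.)\Vert_{\infty}$ on $[0,1]$, shows $\rho$ is Lipschitz, invokes almost-everywhere differentiability together with the fundamental theorem of calculus for absolutely continuous functions (Rudin, Theorem 8.17) to write $\rho(1)-\rho(0)=\int_0^1\rho'(l)\,dl$, and then bounds $\rho'$ pointwise by $L_1\vert t-s\vert$ or $L_2\vert t-s\vert$ according to which subinterval the point $(t-s)h+s$ lies in; the sum $L_1+L_2$ appears because the authors use a single crude constant $C=(L_1+L_2)\vert t-s\vert$ dominating both branches. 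Your argument replaces all of this machinery with a single application of the triangle inequality at the splitting point $c$, and in the crossing case it exhibits the difference quotient as a convex combination of $L_1$ and $L_2$, which is bounded by $\max\{L_1,L_2\}$ --- a strictly sharper conclusion than the stated $L_1+L_2$ (which of course still follows since both seminorms are nonnegative). Your handling of the degenerate positions $t=c$ or $s=c$ by folding them into the same-side cases is also correct, and the positivity of the weights $c-t$ and $s-c$ in the genuine crossing case is exactly the point that legitimizes the convex-combination bound. In short: same statement, weaker hypotheses on technique, stronger intermediate estimate; there is nothing to fix.
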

\begin{proof}
Let $t,s\in [a,b]$ with $t>s$. Define the function $\rho:[0,1]\rightarrow\mathbb{R}$ by
\begin{equation*}
\rho(h):=\Vert\chi((t-s)h+s,.)-\chi(s,.)\Vert _{\infty}, \forall h \in [0,1].
\end{equation*}
Then we have $\forall h,\hat{h}\in [0,1]$,
\begin{eqnarray*}
\vert\rho(h)-\rho(\hat{h})\vert & \leqslant & \left\vert \Vert\chi((t-s)h+s,.)-\chi(s,.)\Vert _{\infty}-\Vert\chi((t-s)\hat{h}+s,.)-\chi(s,.)\Vert _{\infty}\right\vert \\
& \leqslant & \Vert\chi((t-s)h+s,.)-\chi((t-s)\hat{h}+s,.)\Vert _{\infty } \\
& \leqslant & \Vert\chi\Vert _{\mathrm{Lip}([a,b],C(\Omega))}\vert t-s\vert\vert h-\hat{h}\vert ,
\end{eqnarray*}
thus $\rho$ is Lipschitz continuous. Denote
\begin{equation*}
\mathrm{Lip}(\rho)(h):=\limsup _{\varepsilon\rightarrow 0^{+}}\frac{\rho(h+\varepsilon)-\rho(h)}{\varepsilon},
\end{equation*}
then
\begin{equation*}
\mathrm{Lip}(\rho)(h)\leqslant\left\{\begin{array}{l}
\Vert\chi\Vert _{\mathrm{Lip}([a,c],C(\Omega))}\vert t-s\vert ,\text{if }(t-s)h+s\in [a,c), \vspace{0.05cm}\\
\Vert\chi\Vert _{\mathrm{Lip}([c,b],C(\Omega))}\vert t-s\vert ,\text{if }(t-s)h+s\in [c,b].
\end{array}\right.
\end{equation*}
Since $\rho$ is Lipschitz continuous, by using Theorem 8.17 in page 158 of Rudin \cite{Rudin}, we deduce that $\rho$ is differentiable everywhere on a subset of the form $[0,1] \setminus N$ (where $N$ has null Lebesgue measure) and
$$
\rho(t)=\rho(0)+\int _{0}^{t}\rho ^{\prime}(l)dl, \forall t \in [0,1]. 
$$
By using the definition of $\mathrm{Lip}(\rho)(t)$ we deduce that 
$$
\rho ^{\prime}(t) \leqslant \mathrm{Lip}(\rho)(t) \leqslant C , \forall t \in [0,1] \setminus N,
$$
where $C:=\left[ \Vert\chi\Vert _{\mathrm{Lip}([a,c],C(\Omega))} +\Vert\chi\Vert _{\mathrm{Lip}([c,b],C(\Omega))} \right] \vert t-s\vert$. Therefore we obtain 
\begin{equation*}
\Vert \chi (t,.)-\chi(s,.)\Vert _{\infty}=\rho(1)-\rho(0)=\int _{0}^{1}\rho ^{\prime}(l)dl  \leqslant  \int _{0}^{1} C dl
\end{equation*}
which completes the proof.
\end{proof}

\begin{lemma}\label{LE4.2} 
Let $t\geqslant 0$. Assume that $A\in C((-\infty ,t],C(\Omega))$ and $A_{0}=\varphi$. Define for each $(\theta, x) \in (-\infty ,0] \times \Omega$,
$$
A_{t,\alpha}(\theta ,x):=e^{-\alpha\vert\theta\vert}A_{t}(\theta ,x) \text{ and } \varphi _{\alpha}(\theta ,x):=e^{-\alpha\vert\theta\vert}\varphi (\theta ,x). 
$$ 
Then we have the following estimations
\begin{equation}  \label{EQ4.1}
\Vert A_{t,\alpha}\Vert _{\infty }\leqslant \sup _{\theta\in [0,t]}\Vert e^{\alpha (\theta -t)}A(\theta ,.)\Vert _{\infty }+e^{-\alpha t}\Vert\varphi _{\alpha}\Vert _{\infty },
\end{equation}
\begin{equation}  \label{EQ4.2}
\Vert A_{t,\alpha}\Vert _{\mathrm{Lip}((-\infty ,0],C(\Omega))}\leqslant\Vert A_{t,\alpha}\Vert _{\mathrm{Lip}([-t,0],C(\Omega))}+e^{-\alpha t}\Vert\varphi _{\alpha}\Vert _{\mathrm{Lip((-\infty ,0],C(\Omega))}}
\end{equation}
and 
\begin{equation}  \label{EQ4.3}
\Vert A_{t}\Vert _{\mathrm{Lip}_{\alpha}}\leqslant \sup _{\theta\in [0,t]}\Vert e^{\alpha (\theta -t)}A(\theta ,.)\Vert _{\infty }+\Vert A_{t,\alpha}\Vert _{\mathrm{Lip}([-t,0],C(\Omega))}+e^{-\alpha t}\Vert \varphi \Vert _{\mathrm{Lip}_{\alpha}}.
\end{equation}
\end{lemma}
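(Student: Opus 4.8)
The plan is to prove the three estimates in order, the third being an immediate consequence of the first two together with the very definition $\Vert A_t\Vert_{\mathrm{Lip}_\alpha}=\Vert A_{t,\alpha}\Vert_\infty+\Vert A_{t,\alpha}\Vert_{\mathrm{Lip}((-\infty,0],C(\Omega))}$. The common device is to split the history parameter $\theta\in(-\infty,0]$ at the break point $\theta=-t$, and to record the one identity that drives everything. For $\theta\leqslant -t$, setting $s:=t+\theta\leqslant 0$ we have $A_t(\theta,x)=A(s,x)=\varphi(s,x)$ and $\vert\theta\vert=t+\vert s\vert$, so that
\[
A_{t,\alpha}(\theta,x)=e^{-\alpha\vert\theta\vert}\varphi(s,x)=e^{-\alpha t}\,\varphi_\alpha(t+\theta,x),\qquad \theta\leqslant -t.
\]
On the complementary range $\theta\in[-t,0]$, writing $\theta':=t+\theta\in[0,t]$ and using $\vert\theta\vert=t-\theta'$ gives $A_{t,\alpha}(\theta,x)=e^{\alpha(\theta'-t)}A(\theta',x)$.

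For \eqref{EQ4.1} I would write $\Vert A_{t,\alpha}\Vert_\infty=\max\{\sup_{\theta\in[-t,0]}\Vert A_{t,\alpha}(\theta,.)\Vert_\infty,\ \sup_{\theta\leqslant -t}\Vert A_{t,\alpha}(\theta,.)\Vert_\infty\}$ and bound this maximum by the sum of the two suprema. By the second identity the first supremum equals $\sup_{\theta'\in[0,t]}\Vert e^{\alpha(\theta'-t)}A(\theta',.)\Vert_\infty$, and by the first identity the second supremum equals $e^{-\alpha t}\Vert\varphi_\alpha\Vert_\infty$, since the shift $\theta\mapsto t+\theta$ carries $(-\infty,-t]$ onto $(-\infty,0]$. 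This yields \eqref{EQ4.1}.

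For \eqref{EQ4.2} I would apply Lemma \ref{LE4.1} with splitting point $c=-t$ to control the Lipschitz seminorm of $A_{t,\alpha}$ over $(-\infty,0]$ by the sum of its seminorms over $(-\infty,-t]$ and $[-t,0]$: for a pair of times straddling $-t$ one invokes the lemma on the finite interval spanned by the two points (bounding the relevant difference quotient), while pairs lying on a single side are bounded directly. It then remains to prove $\Vert A_{t,\alpha}\Vert_{\mathrm{Lip}((-\infty,-t],C(\Omega))}=e^{-\alpha t}\Vert\varphi_\alpha\Vert_{\mathrm{Lip}((-\infty,0],C(\Omega))}$, which again follows from the first identity: the shift $\theta\mapsto t+\theta$ is an isometry, so every difference quotient of $A_{t,\alpha}$ on $(-\infty,-t]$ equals $e^{-\alpha t}$ times the corresponding difference quotient of $\varphi_\alpha$ on $(-\infty,0]$. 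Finally, \eqref{EQ4.3} is obtained by adding \eqref{EQ4.1} and \eqref{EQ4.2} and regrouping the two terms carrying the factor $e^{-\alpha t}$ into $e^{-\alpha t}\big(\Vert\varphi_\alpha\Vert_\infty+\Vert\varphi_\alpha\Vert_{\mathrm{Lip}}\big)=e^{-\alpha t}\Vert\varphi\Vert_{\mathrm{Lip}_\alpha}$.

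The only delicate point is the Lipschitz splitting over the unbounded interval in \eqref{EQ4.2}: Lemma \ref{LE4.1} is stated for compact intervals, so I would be careful to apply it to the finite interval determined by each individual pair of points rather than to $(-\infty,0]$ directly, and to check the shift-invariance of the seminorm under $\theta\mapsto t+\theta$. Everything else is routine bookkeeping with the exponential weight $e^{-\alpha\vert\theta\vert}$ across the break at $\theta=-t$.
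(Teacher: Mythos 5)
Your proposal is correct and follows essentially the same route as the paper: split the history at $\theta=-t$, use the identity $e^{-\alpha\vert\theta\vert}=e^{-\alpha t}e^{-\alpha\vert t+\theta\vert}$ on $(-\infty,-t]$ for the supremum bound, invoke Lemma \ref{LE4.1} for the Lipschitz seminorm, and add the two estimates to get \eqref{EQ4.3}. Your extra care in applying Lemma \ref{LE4.1} pairwise on finite subintervals (since it is stated for compact intervals) is a point the paper's one-line remark glosses over, but it is the same argument.
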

\begin{proof} 
We have for the supremum norm
\begin{eqnarray*}
 \Vert A_{t,\alpha}\Vert _{\infty } & =  &\sup _{\theta\leqslant 0}\Vert e^{\alpha \theta }A(t+\theta ,.)\Vert _{\infty } =e^{-\alpha t} \sup _{\theta\leqslant 0}\Vert e^{\alpha (t+\theta) }A(t+\theta ,.)\Vert _{\infty } \\
& \leqslant & \sup _{s \in [0,t]}\Vert e^{\alpha (s -t)}A(s ,.)\Vert _{\infty}+e^{-\alpha t}\Vert\varphi _{\alpha}\Vert _{\infty }.
\end{eqnarray*}
The result follows by using similar arguments combined with Lemma \ref{LE4.1} for the Lipschitz semi-norm.  
\end{proof}

\begin{lemma}[Uniqueness of solutions]\label{LE4.3} 
Let $\varphi \in \mathrm{Lip}_{\alpha}$ and $\tau _{0}\in C_{+}(\Omega)$ satisfy
\begin{equation*}
\left\Vert \varphi \right\Vert _{\mathrm{Lip}_{\alpha}}+\Vert \tau _{0}\Vert _{\infty }\leqslant M_{0}.
\end{equation*}
where $M_{0}>0$ is a given real number. Let $r\in(0,+\infty)$ be given. Then the equation (\ref{EQ1.1}) admits at most one solution $(A,\tau )\in C((-\infty ,r],C(\Omega))\times C([0,r],C(\Omega))$.
\end{lemma}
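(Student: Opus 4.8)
The plan is to derive a Gronwall inequality for the quantity
$$\Delta(t):=\sup_{s\in[0,t]}\Vert A(s,.)-\tilde A(s,.)\Vert_{\infty},$$
where $(A,\tau)$ and $(\tilde A,\tilde\tau)$ denote two solutions on $[0,r]$ issued from the same initial distribution $(\varphi,\tau_0)$. By Definition \ref{DE1.4} both $A$ and $\tilde A$ agree with $\varphi$ on $(-\infty,0]$, so $\Delta(0)=0$ and $A(s,.)-\tilde A(s,.)=0$ for every $s\leqslant 0$. Since $[0,r]$ is compact and the solutions are continuous, they are bounded; set $M:=\max\{\sup_{[0,r]}\Vert A\Vert_{\infty},\sup_{[0,r]}\Vert\tilde A\Vert_{\infty}\}$. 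I would first record two regularity facts. From the integral formulation the derivative $\partial_t A=F(A(t,.),\tau(t,.),A(t-\tau(t))(.,.))$ is bounded on $[0,r]$, because $F$ is Lipschitz, hence bounded, on bounded sets by Assumption \ref{ASS1.1}; combined with the fact that $\varphi\in\mathrm{Lip}_{\alpha}$ is Lipschitz on every compact subinterval of $(-\infty,0]$, this shows that $A$ and $\tilde A$ are Lipschitz in time, with a common constant $L_A$, on the compact interval $[-\tau_{\max},r]$. Here $\tau_{\min}\leqslant\tau,\tilde\tau\leqslant\tau_{\max}$ are the uniform delay bounds supplied by Lemma \ref{LE3.8}, which guarantee in particular that all the shifted arguments $l-\tau(l,x)$ and $l-\tilde\tau(l,x)$ remain in $[-\tau_{\max},r]$.

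Because the two initial distributions coincide, the constants $\delta_0$ and $\tilde\delta_0$ of Lemma \ref{LE3.6} are equal, so Lemma \ref{LE3.9}, applied on $[0,t]$ and using that $A-\tilde A$ vanishes on $(-\infty,0]$, produces a constant $L_\tau$ with
$$\Vert\tau(t,.)-\tilde\tau(t,.)\Vert_{\infty}=\Vert\widehat\tau(A_t,\delta_0)(.)-\widehat\tau(\tilde A_t,\delta_0)(.)\Vert_{\infty}\leqslant L_\tau\,\Delta(t).$$
The crux of the argument is the estimate of the delayed-state term. Fixing $(x,y)$ and inserting $\pm A(l-\tilde\tau(l,x),y)$, I would use the time-Lipschitz bound together with the previous display,
$$|A(l-\tau(l,x),y)-A(l-\tilde\tau(l,x),y)|\leqslant L_A\,|\tau(l,x)-\tilde\tau(l,x)|\leqslant L_AL_\tau\,\Delta(l),$$
and, since $l-\tilde\tau(l,x)\leqslant l\leqslant t$ with the difference vanishing for nonpositive arguments,
$$|A(l-\tilde\tau(l,x),y)-\tilde A(l-\tilde\tau(l,x),y)|\leqslant\Delta(l),$$
whence $\Vert A(l-\tau(l))(.,.)-\tilde A(l-\tilde\tau(l))(.,.)\Vert_{\infty}\leqslant(L_AL_\tau+1)\,\Delta(l)$.

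It then remains to subtract the two integral identities of Definition \ref{DE1.4}, apply the local Lipschitz estimate of Assumption \ref{ASS1.1} to the difference of the two nonlinearities, and bound its three argument-differences respectively by $\Delta(l)$, by $L_\tau\Delta(l)$, and by $(L_AL_\tau+1)\Delta(l)$. Taking the supremum over $[0,t]$ yields
$$\Delta(t)\leqslant C\int_0^t\Delta(l)\,dl,\qquad C:=L\,(2+L_\tau+L_AL_\tau),$$
where $L$ is the Lipschitz constant of $F$ on the relevant bounded set, and Gronwall's inequality forces $\Delta\equiv 0$ on $[0,r]$. Hence $A=\tilde A$, and then $\tau=\tilde\tau$ follows from the displayed delay estimate (or directly from the uniqueness part of Lemma \ref{LE3.2}). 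The step I expect to be the main obstacle is the treatment of the delayed term: one must combine the a priori bounds of Lemma \ref{LE3.8}, ensuring the shifted time arguments stay in a fixed compact set on which $A$ is Lipschitz in time, with the delay-stability estimate of Lemma \ref{LE3.9}, and one must verify that the supremum occurring in Lemma \ref{LE3.9} indeed collapses to the localized quantity $\Delta(t)$, so that the Gronwall scheme closes.
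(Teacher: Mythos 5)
Your proposal is correct and follows essentially the same route as the paper: both arguments subtract the integral identities, invoke Lemma \ref{LE3.8} for the a priori delay bounds, Lemma \ref{LE3.9} for the stability of the delay functional, and the time-Lipschitz property of the solutions (coming from the boundedness of $F$ on bounded sets, plus $\varphi\in\mathrm{Lip}_{\alpha}$) to control the delayed term via the same three-way decomposition. The only difference is how the final inequality is closed — you apply Gronwall on $[0,r]$, while the paper defines $t_{0}$ as the last time of agreement and derives a contradiction from a contraction estimate on a short interval past $t_{0}$ — and these are interchangeable; your remark that the supremum in Lemma \ref{LE3.9} localizes to $[0,t]$ because the two solutions coincide on $(-\infty,0]$ is exactly the point that makes either scheme close.
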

\begin{proof}
Suppose that there exist $(A^1,\tau^1 )$,$(A^2,\tau^2) \in C((-\infty ,r],C(\Omega))\times C([0,r],C(\Omega))$ two solutions of \eqref{EQ1.1} on $(-\infty ,r]$ with 
\begin{equation*}
(A^1_{0},\tau^1(0,.))=(A^2_{0},\tau^2(0,.))=(\varphi ,\tau _{0}).
\end{equation*}
Define 
\begin{equation*}
t_{0}=\sup \left\{ t\in [0,r]:A^1(s ,x)=A^2(s ,x),\tau^1(s ,x)=\tau^2(s,x),\forall s \in [0,t],\forall x\in\Omega\right\} .
\end{equation*}
Assume that $t_{0}<r$. We first observe that since $r$ is finite, we have 
\begin{equation*}
\tilde{K}_{0}:=\sup _{s \in [0,r]}\Vert A^1(s ,.)\Vert _{\infty }+\sup _{s \in [0,r]}\Vert A^2(s ,.)\Vert _{\infty}<+\infty .
\end{equation*}
By Lemma \ref{LE3.8}, $\tau^1(t,x)$ and $\tau^2(t,x)$ are also bounded from above (by $\tau^1_{\max}$ and $\tau^2_{\max}$ respectively) on $t\in [0,r]$. Since by Assumption \ref{ASS1.1}, $F: C(\Omega)^{2}\times C(\Omega ^{2}) \rightarrow C(\Omega)$ is Lipschitz on bounded sets and for each $i=1,2$, each $t \in [0,r]$ and each $x \in \Omega$,
$$
A^i(t,x)=\varphi(0,x)+\dint_{0}^{t}F(A^i(l,.),\tau^i(l,.),A^i(l-\tau^i(l))(.,.))(x)dl,
$$
it follows by using Lemma \ref{LE3.8} that for each $i=1,2$,
$$
\sup_{s \in [0,r]}\Vert F(A^i(s,.),\tau^i(s,.),A^i(s-\tau^i(s))(.,.))\Vert_{\infty} <+ \infty ,
$$
and
\begin{equation*}
K_{L}:=\Vert A^1 \Vert _{\mathrm{Lip}([0,r],C(\Omega))}+\Vert A^2 \Vert _{\mathrm{Lip}([0,r],C(\Omega))}<+\infty .
\end{equation*}
Set 
\begin{equation*}
K_{0}:=2(\tilde{K_{0}}+\varphi _{\mathrm{max}})+\tau^1_{\max}+\tau^2_{\max},
\end{equation*}
where $\varphi _{\mathrm{max}}$ is defined in (\ref{EQ3.5}). We have for each $t \in [t_{0},r]$ and each $ x\in\Omega$,
\begin{eqnarray*}
A^1(t,x)-A^2(t,x)& = & \dint_{0}^{t}\left[F(A^1(l,.),\tau^1(l,.),A^1(l-\tau^1(l))(.,.))(x)\right. \\
& & \left. -F(A^2(l,.),\tau^2(l,.),A^2(l-\tau^2(l))(.,.))(x)\right]dl ,
\end{eqnarray*}
thus by using the fact that $F$ is Lipschitz on bounded sets, we obtain 
\begin{eqnarray*}
\Vert A^1(t,.)-A^2(t,.)\Vert_{\infty} & \leqslant & (t-t_{0})L(K_{0}) \sup _{s\in [t_{0},t]}\left[ \Vert A^1(s,.)-A^2(s,.)\Vert _{\infty } \right. \\
& & + \Vert A^1(s-\tau^1 (s))(.,.)-A^2(s-\tau^2(s))(.,.)\Vert_{\infty } \\
& & + \left. \Vert \tau^1(s,.)-\tau^2(s,.)\Vert _{\infty } \right].
\end{eqnarray*}
Define 
$$
\Vert A^1_t-A^2_t\Vert _{\infty }:=\sup_{s \leqslant 0} \Vert A^1(t+s,.)-A^2(t+s,.) \Vert_{\infty}. 
$$
By Lemma \ref{LE3.9} we have for each $s \in [t_{0},t]$,
$$
\Vert \tau^1(s,.)-\tau^2(s,.)\Vert _{\infty }=\Vert\tau(A^1_{s},.)-\tau(A^2_{s},.)\Vert _{\infty }  \leqslant L_{\tau }\Vert A^1_s-A^2_s \Vert _{\infty }\leqslant L_{\tau }\Vert A^1_t-A^2_t \Vert _{\infty }.
$$
Thus 
\begin{eqnarray*}
& & \Vert A^1(s-\tau^1(s))-A^2(s-\tau^2(s))\Vert _{\infty } \\
& \leqslant & \Vert A^1(s-\tau^1(s))-A^1(s-\tau^2(s))\Vert _{\infty } +\Vert A^1(s-\tau^2(s)) -A^2(s-\tau^2(s))\Vert _{\infty }  \\
& \leqslant & K_L \Vert\tau^1 (s,.)-\tau^2(s,.)\Vert _{\infty }+ \Vert A^1_t-A^2_t\Vert _{\infty },
\end{eqnarray*}
hence 
$$
\Vert A^1(s-\tau^1(s))-A^2(s-\tau^2(s)) \Vert _{\infty }  \leqslant (K_L L_{\tau }+1) \Vert A^1_t-A^2_t\Vert _{\infty }.
$$
So we obtain for each $t \in [t_{0},r]$, 
$$
\Vert A^1_t-A^2_t\Vert _{\infty }  \leqslant  (t-t_{0})L(K_{0}) ((K_L+1 )L_{\tau }+2) \Vert A^1_t-A^2_t\Vert _{\infty }. 
$$
It follows that we can find $\varepsilon \in (0, r-t_0)$ such that 
$$
\Vert A^1_t-A^2_t\Vert _{\infty }=0, \forall t \in [t_{0},t_{0}+\varepsilon],
$$
which contradicts with the definition of $t_{0}$. Thus $t_{0}=r$.
\end{proof}

\begin{theorem}[Local existence of solutions]\label{TH4.4}
Let $M_{0}>0$ be fixed. Then for $M>M_{0}$, there exists a time $r=r(M_{0},M)>0 $ such that for each $(\varphi ,\tau _{0}) \in \mathrm{Lip}_{\alpha}\times C_{+}(\Omega)$ satisfying 
\begin{equation*}
\left\Vert \varphi \right\Vert _{\mathrm{Lip}_{\alpha}}+\Vert \tau _{0}\Vert _{\infty } \leqslant M_{0},
\end{equation*}
system (\ref{EQ1.1}) admits a unique solution $(A,\tau )\in C((-\infty ,r],C(\Omega))\times C([0,r],C(\Omega))$. Moreover,
\begin{equation*}
\Vert A(t,.)\Vert _{\infty}\leqslant M,\forall t\in [0,r].
\end{equation*}
\end{theorem}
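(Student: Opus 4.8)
The plan is to prove existence by a contraction mapping argument, since uniqueness on $(-\infty,r]$ is already supplied by Lemma \ref{LE4.3}. The key simplification is that the unknown is really just the map $A$: once $A$ is known, the delay is recovered as $\tau(t,x)=\widehat{\tau}(A_t,\delta_0)(x)$ with $\delta_0(x)=\int_{-\tau_0(x)}^0 f(\varphi(s,.))(x)\,ds$, and by Lemma \ref{LE3.2} (together with Lemma \ref{LE3.6}) this $\tau$ is well defined, continuous, and solves both \eqref{EQ3.1} and \eqref{EQ3.2} for any continuous $A$ with $A_0=\varphi$. Thus a fixed point of the substitution operator below is exactly a solution in the sense of Definition \ref{DE1.4}.

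First I would fix the state space. For constants $K>0$ and $r\in(0,1]$ to be chosen, set
\begin{equation*}
\mathcal{E}:=\Big\{A\in C((-\infty,r],C(\Omega)):A_0=\varphi,\ \sup_{t\in[0,r]}\Vert A(t,.)\Vert_{\infty}\leqslant M,\ \Vert A\Vert_{\mathrm{Lip}([0,r],C(\Omega))}\leqslant K\Big\},
\end{equation*}
endowed with the distance $d(A^1,A^2):=\sup_{t\in[0,r]}\Vert A^1(t,.)-A^2(t,.)\Vert_{\infty}$. Because the past segment is frozen to $\varphi$ and both bounds pass to uniform limits, $\mathcal{E}$ is a nonempty (the constant extension $A(t,.)=\varphi(0,.)$ on $[0,r]$ belongs to it) complete metric space. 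On $\mathcal{E}$ I define, writing $\tau[A](t,x):=\widehat{\tau}(A_t,\delta_0)(x)$,
\begin{equation*}
\Gamma(A)(t,x):=
\begin{cases}
\varphi(0,x)+\displaystyle\int_0^t F\big(A(l,.),\tau[A](l,.),A(l-\tau[A](l))(.,.)\big)(x)\,dl, & t\in[0,r],\\
\varphi(t,x), & t\leqslant 0.
\end{cases}
\end{equation*}

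Next I would check the invariance $\Gamma(\mathcal{E})\subset\mathcal{E}$ for $r$ small. Since $\Vert A(t,.)\Vert_{\infty}\leqslant M$ on $[0,r]$, Lemma \ref{LE3.8} confines $\tau[A]$ to $[\tau_{\min},\tau_{\max}]$, so the three arguments of $F$ range over a fixed bounded set (the shifted term $A(l-\tau[A](l))$ being controlled by $M_1=\max\{M,\varphi_{\mathrm{max}}\}$); as $F$ is Lipschitz on bounded sets this yields a bound $\Vert F(\cdots)\Vert_{\infty}\leqslant C_F$ with $C_F=C_F(M_0,M)$ independent of $r$. Using $\Vert\varphi(0,.)\Vert_{\infty}\leqslant\Vert\varphi\Vert_{\mathrm{Lip}_{\alpha}}\leqslant M_0$ gives $\Vert\Gamma(A)(t,.)\Vert_{\infty}\leqslant M_0+rC_F$ and $\Vert\Gamma(A)\Vert_{\mathrm{Lip}([0,r],C(\Omega))}\leqslant C_F$. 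Hence I take $K:=C_F$ and then $r\leqslant(M-M_0)/C_F$, which (using $M>M_0$) secures $\Gamma(\mathcal{E})\subset\mathcal{E}$.

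Finally I would show $\Gamma$ is a strict contraction for $r$ small, which is the crux. For $A^1,A^2\in\mathcal{E}$ (sharing the same $\delta_0$), write $\tau^i=\tau[A^i]$; then for $t\in[0,r]$,
\begin{equation*}
\begin{aligned}
\Vert\Gamma(A^1)(t,.)-\Gamma(A^2)(t,.)\Vert_{\infty}
&\leqslant r\,L(K_0)\sup_{l\in[0,r]}\Big[\Vert A^1(l,.)-A^2(l,.)\Vert_{\infty}+\Vert\tau^1(l,.)-\tau^2(l,.)\Vert_{\infty}\\
&\qquad+\Vert A^1(l-\tau^1(l))-A^2(l-\tau^2(l))\Vert_{\infty}\Big],
\end{aligned}
\end{equation*}
where $K_0=K_0(M_0,M)$ bounds the relevant arguments. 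The delay difference is handled by Lemma \ref{LE3.9}, $\Vert\tau^1(l,.)-\tau^2(l,.)\Vert_{\infty}\leqslant L_{\tau}\,d(A^1,A^2)$, and the shifted-argument term is split exactly as in the proof of Lemma \ref{LE4.3}: the piece $\Vert A^1(l-\tau^1(l))-A^1(l-\tau^2(l))\Vert_{\infty}$ is bounded by $\widetilde{K}\,\Vert\tau^1(l,.)-\tau^2(l,.)\Vert_{\infty}$ using a uniform time-Lipschitz bound $\widetilde K$ on $[-\tau_{\max},r]$, obtained by combining the bound $K$ on $[0,r]$ with the Lipschitz constant of $\varphi$ on $[-\tau_{\max},0]$ (finite since $\varphi\in\mathrm{Lip}_{\alpha}$) via Lemma \ref{LE4.1}, while the remaining piece is bounded by $d(A^1,A^2)$. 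Collecting terms yields $d(\Gamma A^1,\Gamma A^2)\leqslant r\,C\,d(A^1,A^2)$ with $C=C(M_0,M)$ independent of $r$; shrinking $r$ so that $rC<1$ makes $\Gamma$ a strict contraction, and the Banach fixed point theorem produces the unique $A\in\mathcal{E}$. Setting $\tau:=\tau[A]$ and invoking Lemma \ref{LE3.2} for its regularity finishes the construction, the bound $\Vert A(t,.)\Vert_{\infty}\leqslant M$ being inherited from $\mathcal{E}$. I expect the main obstacle to be precisely the shifted-argument term $A(l-\tau(l))$, which couples the state-dependence of the delay with the time-regularity of $A$; this is why the uniform time-Lipschitz bound must be built into $\mathcal{E}$ and its invariance under $\Gamma$ verified, rather than working in the bare space of continuous functions.
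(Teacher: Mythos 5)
Your proposal is correct and follows essentially the same route as the paper: the same integral operator with $\tau[A]=\widehat{\tau}(A_t,\delta_0)$, invariance of a ball via the local Lipschitz bound on $F$ and Lemma \ref{LE3.8}, the delay difference controlled by Lemma \ref{LE3.9}, and the shifted-argument term handled by a uniform time-Lipschitz bound obtained from Lemma \ref{LE4.1}. The only (cosmetic) difference is packaging: you build the Lipschitz bound into the complete metric space and apply the Banach fixed point theorem directly, whereas the paper runs an explicit Picard iteration and verifies that the iterates carry that bound.
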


\begin{proof}
\noindent \textit{Step 1 (Fixed point problem):} We start by defining the fixed point problem.  Let $M_{0}>0$ be fixed. Let $\varphi \in \mathrm{Lip}_{\alpha}$ and $\tau _{0}\in C_{+}(\Omega)$ satisfy 
\begin{equation*}
\left\Vert \varphi \right\Vert _{\mathrm{Lip}_{\alpha}}+\Vert \tau _{0}\Vert _{\infty} \leqslant M_{0}.
\end{equation*}
Let $M>M_{0}$ be fixed. Define 
\begin{equation*}
E_{\varphi }:=\{A\in C((-\infty ,r],C(\Omega)):A_{0}=\varphi \text{ and } \sup_{t\in [0,r]}\Vert A(t,.)\Vert _{\infty}\leqslant M \}
\end{equation*}
where $r$ will be determined later on. 

Let $\Phi :E_{\varphi}\rightarrow C((-\infty ,r],C(\Omega))$ be the map defined as follows: for each $t \in [0,r]$ and $x\in\Omega$,
\begin{equation}\label{EQ4.4}
\Phi (A)(t)(x):= \varphi (0,x)+\dint_{0}^{t}F(A(l,.),\tau(A_l,.),A(l-\tau(A_l,.),.))(x)dl 
\end{equation}
where $\tau(A_t,x)$ is the unique solution of the integral equation \eqref{EQ3.1}, and for $t\leqslant 0$ and $x\in\Omega$,
$$
\Phi (A)(t)(x):=\varphi (t,x).
$$

Set
\begin{equation}\label{EQ4.5}
\widetilde{M}:=\max\{ M,\varphi _{\mathrm{max}}\}
\end{equation}
where $\varphi _{\mathrm{max}}$ is defined in (\ref{EQ3.5}). For any $A\in E_{\varphi}$ and $t\in[0,r]$, we have 
\begin{eqnarray*}
\Vert \Phi (A)(t)\Vert_{\infty} & \leqslant & \Vert\varphi(0,.)\Vert_{\infty} +\int _{0}^{t}\Vert F(A(l,.),\tau(A_l,.),A(l-\tau(A_l,.),.)) \Vert_{\infty} dl\\
& \leqslant & M_{0}+\int _{0}^{t}\Vert F(A(l,.),\tau(A_l,.),A(l-\tau(A_l,.),.))-F(0,0,0)\Vert_{\infty} dl\\
& & +\int _{0}^{t}\Vert F(0,0,0)\Vert_{\infty} dl \\
&\leqslant & M_{0}+r[(2\widetilde{M}+\tau _{\max})L(2\widetilde{M}+\tau _{\max})+\Vert F(0,0,0)\Vert_{\infty} ].
\end{eqnarray*}
Since $M>M_{0}$ we can find $r_1>0$ such that for each $r \in (0,r_1]$, 
\begin{equation*}
M_{0}+r[(2\widetilde{M}+\tau _{\max})L(2\widetilde{M}+\tau _{\max})+\Vert F(0,0,0)\Vert_{\infty} ]\leqslant M,
\end{equation*}
and it follows that 
\begin{equation*}
\Phi (E_{\varphi})\subset E_{\varphi }.
\end{equation*}
\textit{Step 2 (Lipschitz estimation):} Set 
\begin{equation}\label{EQ4.6}
M_{L}:=\frac{M-M_0}{r} \geqslant (2\widetilde{M}+\tau _{\max})L(2\widetilde{M}+\tau _{\max})+\Vert F(0,0,0)\Vert _{\infty}.  
\end{equation}
For each $t,s \in [0,r]$ with $t \geqslant s$ and $A \in E_{\varphi}$ we have 
\begin{eqnarray*}
& & \frac{\Vert \Phi (A)(t)-\Phi (A)(s) \Vert_{\infty} }{\vert t-s\vert } \leqslant \frac{ \dint _{s}^{t}\Vert F(A(l,.),\tau(A_l,.),A(l-\tau(A_l,.),.))\Vert_{\infty}  dl}{\vert t-s\vert} \\
& \leqslant & \frac{ \dint _{s}^{t}\Vert F(A(l,.),\tau(A_l,.),A(l-\tau(A_l,.),.))-F(0,0,0)\Vert_{\infty} dl }{\vert t-s \vert} +\frac{ \dint _{s}^{t}\Vert F(0,0,0)\Vert_{\infty} dl }{\vert t-s\vert} ,
\end{eqnarray*}
thus 
$$
\Vert \Phi (A) \Vert_{\mathrm{Lip}([0,r],C(\Omega))} \leqslant M_{L}, \forall A \in E_{\varphi}.
$$
\noindent \textit{Step 3 (Iteration procedure):} Consider the sequence $\{ A^{n}\} _{n\in \mathbb{N}}\subset E_{\varphi}$ defined by iteration as follows: for each $(t,x)\in (-\infty,r] \times \Omega$,
\begin{equation*}
A^{0}(t,x)=\left\{ 
\begin{array}{l}
\varphi (0,x), \text{ if } t \in [0,r], \\
\varphi (t,x), \text{ if } t \leqslant 0,
\end{array}
\right.
\end{equation*}
and for each integer $n \geqslant 0$,
\begin{equation*}
A^{n+1}(t,x)=
\left\{ 
\begin{array}{l}
\Phi (A^{n})(t)(x), \text{ if } t \in [0,r], \\
\varphi (t,x), \text{ if } t \leqslant 0. 
\end{array}
\right.
\end{equation*}
From the step 2 and the definition of $A_0$, we know that for each integer $n \geqslant 0$,
$$
A^n \in \mathrm{Lip}([- \tau_0^{ \infty },r], C(\Omega)).
$$
and 
$$
\Vert A^n  \Vert_{\mathrm{Lip}([-\tau_0^{\infty },r],C(\Omega))}\leqslant \max\{M_L,\Vert \varphi\Vert _{\mathrm{Lip}([-\tau _{0}^{\infty},0],C(\Omega))}\} =:\widehat{M}_L. 
$$
For each integer $n,p\geqslant 0$, the maps $A^{n}$ and $A^{p}$ coincide for negative time $t$, therefore we can define
$$
\Vert A^{n}-A^{p}\Vert _{\infty}:=\sup_{t\leqslant r}\Vert A^{n}(t,.)-A^{p}(t,.) \Vert_{\infty}. 
$$
Next, we have $\forall n\in \mathbb{N}$, 
\begin{eqnarray*} 
& & \Vert A^{n+1}-A^{n}\Vert _{\infty} =\sup _{t\in [0,r]}\Vert\Phi (A^{n})(t)(.)-\Phi (A^{n-1})(t)(.)\Vert _{\infty} \\
& \leqslant & \dint _{0}^{r}\Vert F(A^{n}(l,.),\tau(A_l^n,.),A^{n}(l-\tau(A_l^n,.),.)) \\
& & -F(A^{n-1}(l,.),\tau(A_l^{n-1},.),A^{n-1}(l-\tau(A_l^{n-1},.),.)) \Vert_{\infty} dl \\
& \leqslant & rL(\widetilde{M}+\tau _{\max})\left[\Vert A^{n}-A^{n-1}\Vert _{\infty}
+\sup _{l\in[0,r]}\Vert \tau(A_l^{n},.)-\tau(A_l^{n-1},.)\Vert _{\infty} \right. \\
& & \left. +\sup _{l\in [0,r]}\Vert A^{n}(l-\tau(A_l^{n},.),.)-A^{n-1}(l-\tau(A_l^{n-1},.),.) \Vert_{\infty} \right], \\
& \leqslant &r L(\widetilde{M}+\tau _{\max}) \left[\Vert A^{n}-A^{n-1} \Vert _{\infty} +(1+\Vert A^{n}\Vert _{\mathrm{Lip}([-\tau _{0}^{\infty},r],C(\Omega))})\right.\cdot \\ 
& & \left.\sup _{l\in [0,r]}\Vert \tau(A_l^{n},.)-\tau(A_l^{n-1},.)\Vert _{\infty} \right]. 
\end{eqnarray*}
By Lemma \ref{LE3.9} we obtain for each integer $n \geqslant 1$, 
\begin{eqnarray*} 
\left\Vert A^{n+1}-A^{n}\right\Vert _{\infty}  & \leqslant & r C \Vert A^{n}-A^{n-1} \Vert _{\infty}
\end{eqnarray*}
with $C:= \left(2+L_{\tau }(1+\widehat{M}_{L}) \right)L(\widetilde{M}+\tau _{\max}) $. 

Now we can find $r_2 \in (0,r_1]$ such that $r_2C <1/2$, and for each $r \in (0,r_2]$ we have 
$$
\left\Vert A^{n+1}-A^{n}\right\Vert _{\infty} \leqslant \frac{1}{2^n} \Vert A^{1}-A^{0}\Vert _{\infty}, \forall n \geqslant 1. 
$$
It follows that $\{A^n \vert_{[0,r]}\}$ is a Cauchy sequence in the space $C([0,r],C(\Omega))$ and coincides with $\varphi$ for negative $t$.  Define  
$$
A(t,x):=\left\{ 
\begin{array}{l}
\displaystyle \lim_{n \to +\infty} A^n(t,x), \text{ if } t \in [0,r], x\in\Omega , \\
\varphi(t,x), \text{ if } t \leqslant 0,x\in\Omega .
\end{array}
\right.
$$
Then we have 
$$
\lim_{n \to + \infty} \left\Vert A^{n}-A\right\Vert _{\infty} =0. 
$$
By using again Lemma \ref{LE3.9}, we have 
\begin{eqnarray*}
\sup _{l\in [0,r]}\Vert \tau(A_l^{n},.)-\tau(A_l,.)\Vert _{\infty} \leqslant L_{\tau } \left\Vert A^{n}-A\right\Vert _{\infty}, \forall n \geqslant 1. 
\end{eqnarray*}
Finally by taking the limit on both sides of the equation 
\begin{equation*}
A^{n+1}(t,x):= \varphi (0,x)+\dint_{0}^{t}F(A^n(l,.),\tau(A_l^n,.),A^n(l- \tau(A_l^n,.),.))(x)dl,
\end{equation*}
we deduce that the couple $(A, \tau(A_t,.) )$ is a solution of equation \eqref{EQ1.1}.  

\noindent \textit{Step 4: (Estimation of the solution)}
We observe that 
\begin{eqnarray*}
\Vert A^{n+1}-A^{0} \Vert _{\infty} & \leqslant & \Vert A^{n+1}-A^{n} \Vert _{\infty} + \Vert A^{n}-A^{n-1} \Vert _{\infty} + \ldots + \Vert A^{1}-A^{0} \Vert _{\infty} \\
& \leqslant & \sum _{k=0}^{n}\left( \dfrac{1}{2}\right)^{n}\Vert A^{1}-A^{0}\Vert _{\infty},
\end{eqnarray*}
therefore 
$$
\Vert A^{n+1}-A^{0} \Vert _{\infty}  \leqslant 2 \Vert A^{1}-A^{0}\Vert _{\infty},
$$
and by taking the limit when $n$ goes to infinity we obtain 
$$
\Vert A-A^{0} \Vert _{\infty} \leqslant 2 \Vert A^{1}-A^{0}\Vert _{\infty}.
$$
From the definition of $A^1$ and $A^0$ we have 
$$
\Vert A^{1}-A^{0}\Vert _{\infty} \leqslant \dint _{0}^{r}\Vert F(A^{0}(l,.),\tau(A_l^0,.),A^{0}(l-\tau(A_l^0,.),.)) \Vert_{\infty} dl,
$$
so 
$$
\Vert A^{1}-A^{0}\Vert _{\infty} \leqslant r C_1,
$$
where $C_1=\sup\limits _{l \in [0,r]}\Vert F(A^{0}(l,.),\tau(A_l^0,.),A^{0}(l-\tau(A_l^0,.),.)) \Vert_{\infty}$. It follows that for each $t \in [0,r]$,
$$
\Vert A(t,.) \Vert _{\infty} \leqslant \Vert A(t,.) -\varphi(0,.)\Vert _{\infty}+ \Vert \varphi(0,.)\Vert _{\infty}\leqslant \Vert A-A^{0} \Vert _{\infty}+ M_0,
$$
thus  
$$
\Vert A(t,.) \Vert _{\infty} \leqslant M_0+ 2 r C_1,
$$
and by choosing $r$ small enough we obtain $M_0+ 2 r C_1 \leqslant M$. The proof is completed. 
\end{proof}

From step 2 in the above proof combined with Lemma \ref{LE4.2}, we have the following corollary.
\begin{corollary}\label{CO4.5}
With the same notation as in Theorem \ref{TH4.4}, we have 
$$ 
A_t \in \mathrm{Lip}_\alpha, \forall t \in [0,r],
$$
and there exists $\widehat{M}:=\widehat{M}(M,\tau_{\max},\alpha)> M$ such that 
$$
\Vert A_t \Vert_{\mathrm{Lip}_\alpha}\leqslant \widehat{M}, \forall t \in [0,r]. 
$$
\end{corollary}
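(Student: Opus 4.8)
The plan is to apply the estimate (\ref{EQ4.3}) of Lemma \ref{LE4.2} to the solution $A$ constructed in Theorem \ref{TH4.4}, and to bound each of the three terms on its right-hand side using information already produced in that proof. Recall that the solution satisfies $A_0=\varphi$, that $\sup_{t\in[0,r]}\Vert A(t,.)\Vert_\infty\leqslant M$ (since $A\in E_\varphi$), and that, as the fixed point of $\Phi$ (equivalently, as the uniform limit of the iterates $A^n$), it inherits from Step 2 of that proof the Lipschitz bound $\Vert A\Vert_{\mathrm{Lip}([0,r],C(\Omega))}\leqslant M_L$, with $M_L$ as in (\ref{EQ4.6}). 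Since Lemma \ref{LE4.2} applies to any $A\in C((-\infty,t],C(\Omega))$ with $A_0=\varphi$, these are precisely the ingredients that (\ref{EQ4.3}) consumes.

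First I would dispose of the two easy terms (here $\alpha\geqslant 0$). For the first term, since $\theta\leqslant t$ we have $e^{\alpha(\theta-t)}\leqslant 1$, hence $\sup_{\theta\in[0,t]}\Vert e^{\alpha(\theta-t)}A(\theta,.)\Vert_\infty\leqslant\sup_{\theta\in[0,t]}\Vert A(\theta,.)\Vert_\infty\leqslant M$. For the third term, $e^{-\alpha t}\leqslant 1$ gives $e^{-\alpha t}\Vert\varphi\Vert_{\mathrm{Lip}_\alpha}\leqslant\Vert\varphi\Vert_{\mathrm{Lip}_\alpha}\leqslant M_0$.

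The main step, and the only mildly delicate point, is the middle term $\Vert A_{t,\alpha}\Vert_{\mathrm{Lip}([-t,0],C(\Omega))}$, where the weight $e^{\alpha\theta}$ must be handled. Writing $A_{t,\alpha}(\theta,x)=e^{\alpha\theta}A(t+\theta,x)$ for $\theta\in[-t,0]$ and, for $\theta_1,\theta_2\in[-t,0]$, splitting
\[
A_{t,\alpha}(\theta_1,.)-A_{t,\alpha}(\theta_2,.)=e^{\alpha\theta_1}\left[A(t+\theta_1,.)-A(t+\theta_2,.)\right]+\left(e^{\alpha\theta_1}-e^{\alpha\theta_2}\right)A(t+\theta_2,.),
\]
I would estimate the two pieces separately. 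Since $t+\theta_1,t+\theta_2\in[0,t]\subset[0,r]$ and $|(t+\theta_1)-(t+\theta_2)|=|\theta_1-\theta_2|$, the first piece contributes at most $M_L$ after dividing by $|\theta_1-\theta_2|$ (using $e^{\alpha\theta_1}\leqslant 1$ together with the Lipschitz bound on $A$); and since $|e^{\alpha\theta_1}-e^{\alpha\theta_2}|\leqslant\alpha|\theta_1-\theta_2|$ on $[-t,0]$ while $\Vert A(t+\theta_2,.)\Vert_\infty\leqslant M$, the second piece contributes at most $\alpha M$. This yields $\Vert A_{t,\alpha}\Vert_{\mathrm{Lip}([-t,0],C(\Omega))}\leqslant M_L+\alpha M$.

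Adding the three bounds into (\ref{EQ4.3}) gives, for all $t\in[0,r]$,
\[
\Vert A_t\Vert_{\mathrm{Lip}_\alpha}\leqslant M+(M_L+\alpha M)+M_0=:\widehat{M},
\]
so in particular $A_t\in\mathrm{Lip}_\alpha$ and $\widehat{M}>M$. Since $M_0<M$, and $M_L$ is controlled through the Lipschitz constant $L(2\widetilde{M}+\tau_{\max})$ of $F$ and $\widetilde{M}=\max\{M,\varphi_{\max}\}$ from (\ref{EQ4.5}), the constant $\widehat{M}$ can be expressed as a function $\widehat{M}(M,\tau_{\max},\alpha)$, as claimed. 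I expect the product estimate for the middle term to be the only non-routine ingredient; the rest is a direct substitution of the Theorem \ref{TH4.4} bounds into Lemma \ref{LE4.2}.
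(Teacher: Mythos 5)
Your proof is correct and follows the same route as the paper, which simply cites the Lipschitz bound $M_L$ from Step 2 of Theorem \ref{TH4.4} together with estimate (\ref{EQ4.3}) of Lemma \ref{LE4.2}. The only thing you add is the explicit product-rule estimate $\Vert A_{t,\alpha}\Vert_{\mathrm{Lip}([-t,0],C(\Omega))}\leqslant M_L+\alpha M$, a detail the paper leaves implicit, and your handling of it is correct.
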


\section{Properties of the semiflow}

In this section we investigate the semiflow properties of the map $W:D(W) \subset [0,+\infty) \times \mathrm{Lip}_\alpha \times C(\Omega) \to  \mathrm{Lip}_\alpha \times C(\Omega) $ defined for each initial distribution $W_{0}=\begin{pmatrix}\varphi \\ \tau _{0}\end{pmatrix}\in \mathrm{Lip}_{\alpha}\times C_{+}(\Omega)$ as 
\begin{equation*}
W(t,W_{0})(x)=\begin{pmatrix}
A_{t}(.,x) \\
\tau(t,x))
\end{pmatrix}
\end{equation*}
where $\{(A_t,\tau(t,.))\}_{t \in [0, T_{BU}(W_{0}))}$ is the solution of \eqref{EQ1.1} with initial distribution $W_{0}$ which is defined up to the maximal time of existence  
\begin{equation*}
\begin{array}{r}
T_{BU}(W_{0})=\sup \{ t>0:\text{ there exists a solution of  \eqref{EQ1.1}  on the  interval } [0,t] \\ \text{ with the initial distribution } W_{0}\},
\end{array}
\end{equation*}
and the domain 
$$
D(W)= \bigcup_{W_0 \in \mathrm{Lip}_{\alpha}\times C_{+}(\Omega) } [0,T_{BU}(W_{0}) )\times \{ W_0 \}. 
$$
We first observe by Theorem \ref{TH4.4} that we must have $T_{BU}(W_{0})>0$.

\begin{proof} \textit{(First part of Theorem \ref{TH1.6})} In this proof, we will verify that $\mathcal{U}$ satisfies the properties (i)-(iv) of Definition \ref{DE1.5}. Suppose that $W_{0}\in \mathrm{Lip}_{\alpha}\times C_{+}(\Omega)$ satisfies $\Vert W_{0}\Vert _{\mathrm{Lip}_{\alpha}\times C(\Omega)}\leqslant M_{0}$, where $M_{0}$ is a positive constant, and that the semiflow $\mathcal{U}$ is defined by 
\begin{equation*}
\mathcal{U}(t)W_{0}(x):=\begin{pmatrix}
A_{t}(.,x) \\
\tau (t,x)
\end{pmatrix}
\end{equation*}
where the map $t \mapsto (A_t,\tau(t,.))$ belongs to $C( [0,T_{BU}(W_0)), \mathrm{Lip}_\alpha \times C(\Omega))$ is solution of \eqref{EQ1.1} with initial value $W_0$.  The property (ii) of Definition \ref{DE1.5} is trivially satisfied, since by construction $\mathcal{U}(0)W_{0}=W_{0}$. 

\noindent \textit{Step 1 ((i) and (iii) of Definition \ref{DE1.5}):} By Lemma \ref{LE3.2}, the map $t \mapsto (A_t,\tau(t,.))$ is the solution of \eqref{EQ1.1} if and only if for each $t \in [0,T_{BU}(W_0))$ and each $x \in \Omega$ the equations 
\begin{equation}\label{EQ5.1}
\left\{\begin{array}{l}
A(t,x)=\varphi (0,x)+\dint _{0}^{t} F(A(l,.),\tau(l,.),A(l-\tau (l))(.,.))(x)dl, \vspace{0.1cm} \\
\tau (t,x)=\tau _{0}(x)+\dint _{0}^{t} \left[1-\dfrac{f(A(l,.))(x)}{f(A(l-\tau (l))(.,.))(x)}\right]dl,
\end{array}\right.
\end{equation}
are satisfied together with the initial condition 
$$
(A_0, \tau(0,.))=W_{0}.
$$  
Let $s \in [0,T_{BU}(W_{0}))$. Let us prove that 
$$
s + T_{BU}(U(s)W_{0}) \leqslant T_{BU}(W_{0}) 
$$
and for each $t \in [0, T_{BU}(U(s)W_{0}))$,
\begin{equation}\label{EQ5.2}
\mathcal{U}(t+s)W_0=\mathcal{U}(t)\mathcal{U}(s)W_0. 
\end{equation}
For each $t \in [0, T_{BU}(U(s)W_{0}))$,
\begin{equation*}
\mathcal{U}(t)\mathcal{U}(s)(W_{0})=\mathcal{U}(t)\left(\begin{array}{c}
A_s \\
\tau (s,.)
\end{array}\right)=\left(\begin{array}{c}
\tilde{A}_{t} \\
\tilde{\tau}(t)
\end{array}\right)
\end{equation*}
where $t \mapsto (A_t,\tau (t,.))$ is the solution of \eqref{EQ1.1} with initial condition $W_0$ and $t \mapsto (\tilde{A}_t,\tilde{\tau} (t,.))$ is the solution of \eqref{EQ1.1} with initial condition $\mathcal{U}(s)(W_{0})$. Then we have for each $t \in [0, T_{BU}(U(s)W_{0}))$, 
\begin{equation}\label{EQ5.3}
\left\{\begin{array}{l}
\tilde{A}(t,x)=A(s,x)+\dint _{0}^{t} F(\tilde{A}(l,.),\tilde{\tau}(l,.),\tilde{A}(l-\tilde{\tau} (l))(.,.))(x)dl \vspace{0.1cm} \\
\tilde{\tau }(t,x)=\tau (s,x)+\dint _{0}^{t}\left[1-\dfrac{f(\tilde{A}(l,.))(x)}{f(\tilde{A}(l-\tilde{\tau} (l))(.,.))(x)}\right]dl
\end{array}\right.
\end{equation}
with initial condition 
$$
\tilde{A}_0=A_s,\tilde{\tau }(0,x)=\tau (s,x).
$$
Now by setting 
$$
(\bar{A}_t,\bar{\tau}(t,.) ):=
\left\{
\begin{array}{l}
(\tilde{A}_{t-s},\tilde{\tau}(t-s,.)), \text{ if }t \in [s,s+ T_{BU}(U(s)W_{0})), \\
(A_{t},\tau(t,.)), \text{ if } t \in [0,s],
\end{array}
\right.
$$
then by using \eqref{EQ5.1} and \eqref{EQ5.3} we deduce that $ t \mapsto (\bar{A}_t,\bar{\tau}(t,.) )$ is a solution of \eqref{EQ1.1} on the time interval $[0,s+ T_{BU}(U(s)W_{0}))$ with initial condition $W_0$. It follows that  
$$
s + T_{BU}(U(s)W_{0}) \leqslant T_{BU}(W_{0}). 
$$ 

Now assume that $t \mapsto (A_t,\tau(t,.))$ is a solution of \eqref{EQ1.1} on the time interval $[0,T_{BU}(W_{0}))$ with initial condition $W_0$. Let $s \in  [0,T_{BU}(W_{0}))$. Then by using \eqref{EQ5.1} it follows that $t \mapsto (A_{t+s},\tau(t+s,.))$ defined on $[0,T_{BU}(W_{0})-s)$ is a solution of \eqref{EQ1.1} with initial condition $U(s)W_0$. It follows that 
$$
T_{BU}(W_0)-s \leqslant T_{BU}(U(s)W_0)
$$
and the properties (i) and (iii) of Definition \ref{DE1.5} follow.  

\noindent \textit{Step 2 ((iv) of  Definition \ref{DE1.5}):} Assume that $T_{BU}(W_{0})<+\infty$. Suppose that $\Vert\mathcal{U}(t)W_{0}\Vert _{\mathrm{Lip}_{\alpha}\times C(\Omega)}$ does not go to $+\infty $ when $t\nearrow T_{BU}(W_{0})$. Then there exists a constant $M_{0}>0$ and a sequence $\{t_{n}\}\subset [0,T_{BU}(W_{0}))$ such that $\lim\limits_{n\rightarrow +\infty}t_{n}=T_{BU}(W_{0})$, and for any $n\in \mathbb{N}$, 
\begin{equation*}
\Vert\mathcal{U}(t_{n})W_{0}\Vert _{\mathrm{Lip}_{\alpha}\times C(\Omega)}=\left\Vert 
\begin{pmatrix}
A_{t_{n}} \\ 
\tau(t_n,.)
\end{pmatrix}
\right\Vert _{\mathrm{Lip}_{\alpha}\times C(\Omega)}\leqslant M_{0}.
\end{equation*}
Let $W_{0,n}:=\begin{pmatrix}
A_{t_{n}} \\ 
\tau(t_n,.)
\end{pmatrix}$. Then by the local existence Theorem \ref{TH4.4} and Corollary \ref{CO4.5} we can find a constant $\widehat{M} > M_{0}$ and a time $r=r(M_{0},\widehat{M})>0$ such that for any $n\in\mathbb{N}$,
$$
T_{BU}(W_{0,n}) \geqslant r. 
$$
Moreover, from the previous part of the proof we have 
\begin{equation*}
T_{BU}(W_{0})=t_n+T_{BU}(W_{0,n}) \geqslant t_{n}+r,
\end{equation*}
and when $n\rightarrow +\infty $ we obtain  
\begin{equation*}
T_{BU}(W_{0}) \geqslant T_{BU}(W_{0})+r
\end{equation*}
a contradiction since  $r>0$. Thus we have
\begin{equation}\label{EQ5.4}
\lim_{t\nearrow T_{BU}(W_{0})}\Vert \mathcal{U}(t)W_{0}\Vert _{\mathrm{Lip}_{\alpha}\times C(\Omega)}=+\infty .
\end{equation}
\end{proof}

\begin{proof} \textit{(Second part of Theorem \ref{TH1.6})} Let us prove that if $T_{BU}(W_{0})<+\infty$ then 
\begin{equation*}
\limsup\limits_{t\nearrow T_{BU}(W_{0})}\Vert A(t,.)\Vert _{\infty}=+\infty .
\end{equation*}
Assume that $T_{BU}(W_{0})<+\infty$ and assume by contradiction that $ \limsup\limits_{t\nearrow T_{BU}(W_{0})}\Vert A(t,.)\Vert _{\infty}<+\infty$.  Since the map $t\mapsto t-\tau (t,x)$ is increasing, we have
\[
-\tau _{0}(x)\leqslant t-\tau (t,x) \leqslant t <T_{BU}(W_{0}), \forall x\in\Omega ,
\]
therefore 
$$
0\leqslant \tau (t,x) \leqslant T_{BU}(W_{0})+\tau _{0}(x). 
$$
And by assumption $T_{BU}(W_{0})<+\infty$, then
$$
\limsup_{t \nearrow T_{BU}(W_{0})} \Vert \tau (t,.) \Vert_{\infty} <+\infty. 
$$ 
Moreover, for each $t \in [0, T_{BU}(W_{0}))$, 
$$
\partial_t A(t,x)=F(A(t,.),\tau(t,.),A(t-\tau (t))(.,.))(x),
$$
and since $F$ is Lipschitz on bounded sets and by Lemma \ref{LE4.2} we deduce that 
$$
\limsup_{t \nearrow T_{BU}(W_{0})}  \Vert e^{ \alpha . } A_t(.)\Vert_{\mathrm{Lip}} < + \infty, 
$$
which contradicts \eqref{EQ5.4}. 
\end{proof}

\begin{lemma}
\label{LE5.1} We have the following results:

\begin{enumerate}
\item[(i)] The map $W_{0} \mapsto T_{BU}(W_{0})$ is lower semi-continuous on $\mathrm{Lip}_{\alpha}\times C_{+}(\Omega)$.

\item[(ii)] For every $x\in\Omega$, for every $W_{0}\in \mathrm{Lip}_{\alpha}\times C_{+}(\Omega)$, $\widehat{T}\in (0,T_{BU}(W_{0}))$, and every sequence $\{W_{0}^{(n)}\}_{n\in \mathbb{N}}\subset \mathrm{Lip}_{\alpha}\times C_{+}(\Omega)$ satisfying 
\begin{equation*}
\lim_{n\rightarrow +\infty }W_{0}^{(n)}=W_{0} \text{ in } \mathrm{Lip}_{\alpha}\times C_{+}(\Omega),
\end{equation*}
we have 
\begin{equation}\label{EQ5.5}
\lim_{n\rightarrow +\infty }\sup _{t\in [0,\widehat{T}]}\left\Vert \mathcal{U}(t)W_{0}^{(n)}-\mathcal{U}(t)W_{0}\right\Vert _{\mathrm{Lip}_{\alpha}\times C(\Omega)}=0.
\end{equation}
\end{enumerate}
\end{lemma}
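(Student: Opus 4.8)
The plan is to prove the two assertions of Lemma \ref{LE5.1} in a logically dependent order: first establish the local uniform continuous dependence estimate (ii) on any fixed interval $[0,\widehat{T}]$ strictly inside the existence interval, and then deduce the lower semicontinuity (i) of $T_{BU}$ as a consequence of (ii) combined with the blow-up alternative \eqref{EQ5.4} and the local existence time from Theorem \ref{TH4.4}. The reason for this ordering is that lower semicontinuity is really a statement that solutions starting from nearby data survive at least almost as long, and the cleanest way to see this is to first know that on any compact subinterval the solutions stay uniformly close, so that the approximating solutions inherit a uniform a priori bound and hence (via Theorem \ref{TH4.4} and Corollary \ref{CO4.5}) a uniform lower bound on their remaining time of existence.

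For part (ii), fix $W_0$ and $\widehat{T}\in(0,T_{BU}(W_0))$. First I would record that by continuity of $t\mapsto(A_t,\tau(t,.))$ on the compact interval $[0,\widehat{T}]$, the quantity $K:=\sup_{t\in[0,\widehat{T}]}\Vert A(t,.)\Vert_\infty$ is finite; choosing $M_0$ to dominate the norms of all $W_0^{(n)}$ for $n$ large and applying Theorem \ref{TH4.4} and Corollary \ref{CO4.5}, the approximating solutions $(A^{(n)},\tau^{(n)})$ all exist and are uniformly bounded on a common short interval. The heart of the argument is a Gronwall-type estimate: writing the difference $A^{(n)}(t,x)-A(t,x)$ through the integrated form \eqref{EQ5.1}, using that $F$ is Lipschitz on bounded sets (Assumption \ref{ASS1.1}) and that the delay map is Lipschitz in the state via Lemma \ref{LE3.9}, I would bound
\begin{equation*}
\Vert A^{(n)}_t-A_t\Vert_\infty \leqslant C_1\Vert W_0^{(n)}-W_0\Vert_{\mathrm{Lip}_\alpha\times C(\Omega)}+C_2\int_0^t\Vert A^{(n)}_s-A_s\Vert_\infty\,ds,
\end{equation*}
exactly as in the uniqueness proof of Lemma \ref{LE4.3}, the constants $C_1,C_2$ depending only on $M_0$, $K$, $\widehat{T}$, $L_\tau$, and the local Lipschitz constant of $F$. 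Gronwall's inequality then gives uniform smallness of $\sup_{t\in[0,\widehat{T}]}\Vert A^{(n)}_t-A_t\Vert_\infty$, and the corresponding control of $\sup_t\Vert\tau^{(n)}(t,.)-\tau(t,.)\Vert_\infty$ follows from Lemma \ref{LE3.9}. Finally, to upgrade from the sup-norm to the $\mathrm{Lip}_\alpha$-norm required in \eqref{EQ5.5}, I would invoke Lemma \ref{LE4.2} (estimates \eqref{EQ4.2}--\eqref{EQ4.3}): the Lipschitz part of $A^{(n)}_t-A_t$ is controlled by the Lipschitz constant of the right-hand side $F$ evaluated along the two solutions, which in turn is small because $F$ is Lipschitz and the solutions are already sup-close.

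For part (i), I would argue by the standard continuation scheme. Fix $W_0$ and let $\widehat T<T_{BU}(W_0)$ be arbitrary; the goal is $\liminf_{n}T_{BU}(W_0^{(n)})\geqslant \widehat T$ whenever $W_0^{(n)}\to W_0$. By part (ii) the solutions through $W_0^{(n)}$ exist on $[0,\widehat T\,]$ for $n$ large (they cannot blow up there, since they stay within a bounded neighborhood of the bounded solution through $W_0$), hence $T_{BU}(W_0^{(n)})>\widehat T$ for all large $n$; letting $\widehat T\nearrow T_{BU}(W_0)$ yields lower semicontinuity. The main technical obstacle, and the step deserving the most care, is the passage from the sup-norm Gronwall estimate to the $\mathrm{Lip}_\alpha$-norm conclusion in \eqref{EQ5.5}: because the norm on $\mathrm{Lip}_\alpha$ controls a weighted Lipschitz seminorm on the whole half-line $(-\infty,0]$, one must use Lemma \ref{LE4.2} to split the seminorm into the contribution on $[-t,0]$ (where the solution is newly generated and whose Lipschitz constant is governed by $\Vert F\Vert_\infty$ along the trajectory) and the contribution from the initial datum (which carries the factor $e^{-\alpha t}$ and is directly controlled by $\Vert W_0^{(n)}-W_0\Vert_{\mathrm{Lip}_\alpha\times C(\Omega)}$); handling the delayed argument $A(l-\tau(l))$ uniformly in $x$ while tracking this weighted Lipschitz structure is where the estimate is most delicate.
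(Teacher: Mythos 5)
Your proposal is correct in substance but follows a genuinely different route from the paper. The paper does not use a Gronwall inequality at all: it re-runs the local-existence contraction argument of Theorem \ref{TH4.4}, but centered at the reference solution $\bar A_{t_0}$ rather than at a constant extension of the initial datum. On a short interval $[t_0,t_0+r]$ it builds the perturbed solution as the fixed point of the map $\Phi$ in \eqref{EQ5.6}, uses the geometric estimate $\Vert A-A^{0}\Vert_\infty\leqslant 2\Vert A^{1}-A^{0}\Vert_\infty\leqslant 2\varepsilon$ to conclude $\Vert A(t,.)-\bar A_{t_0}(t,.)\Vert_\infty\leqslant 2\varepsilon$, and then chains this over $n=t^\ast/r$ subintervals by induction, taking the initial perturbation of size $\varepsilon/2^{n+1}$ so that the error, which roughly doubles at each step, is still of order $\varepsilon$ at time $t^\ast$. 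The advantage of the paper's scheme is that existence of the perturbed solution on each subinterval is \emph{produced} by the fixed-point construction itself, so the continuation issue never arises; the price is the bookkeeping of the per-step doubling. Your Gronwall approach is cleaner and gives an explicit $e^{C_2\widehat T}$ dependence, and your derivation of (i) from (ii) is the same continuation logic the paper implicitly relies on.

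The one point where your write-up has a genuine (though repairable) gap is the order of quantifiers in part (ii): you apply Gronwall on $[0,\widehat T]$ and conclude $\sup_{t\in[0,\widehat T]}\Vert A^{(n)}_t-A_t\Vert_\infty$ is small, but the integral inequality is only valid on the interval where $A^{(n)}$ is already known to exist and to satisfy the uniform bound that fixes the Lipschitz constants $C_1,C_2$. Theorem \ref{TH4.4} only gives you a common \emph{short} interval of existence, not $[0,\widehat T]$. As written, your (ii) silently assumes what your (i) is meant to prove. The standard repair is a bootstrap: let $t_n^\ast$ be the supremum of times $t\leqslant\widehat T$ up to which $A^{(n)}$ exists and satisfies $\Vert A^{(n)}(s,.)\Vert_\infty\leqslant K+1$ for $s\leqslant t$; Gronwall on $[0,t_n^\ast)$ shows the bound is strict for $n$ large, and Theorem \ref{TH4.4} applied at $t_n^\ast$ then forces $t_n^\ast=\widehat T$. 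This is precisely the role played by the paper's subinterval induction, so you should make the bootstrap explicit rather than leaving it folded into the parenthetical remark in part (i).
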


\begin{proof}
\textit{Step 1 (Fixed point problem):} Let $ t \mapsto (\bar{A}_t, \bar{\tau}(t,.))$ be a solution of system \eqref{EQ1.1} which exists up to the maximal time of existence $T_{BU}(\overline{W}_{0})$ with the initial distribution $\overline{W}_{0}=\begin{pmatrix}
\bar{\varphi} \\ 
\bar{\tau}_{0}
\end{pmatrix}
\in \mathrm{Lip}_{\alpha}\times C_{+}(\Omega)$. 

Let $t^{\ast}\in (0,T_{BU}(\overline{W}_{0}))$ be fixed. By construction the map $t \mapsto (\bar{A}_t, \bar{\tau}(t,.))$ is continuous from $[0,T_{BU}(\overline{W}_{0})) $  to $BUC_{\alpha}\times C_{+}(\Omega)$. Therefore 
$$
\sup_{t \in [0,t^{\ast}] } \left[\Vert \bar{A}_t \Vert_{BUC_{\alpha}} +  \Vert \bar{\tau}(t,.) \Vert_{\infty}\right] <+\infty,
$$ 
and since $\bar{A}(t,.)$ satisfies the equation \eqref{EQ1.1} for positive time $t$, it follows that 
$$
\widehat{M}:=\sup_{t \in [0,t^{\ast}] } \Vert \bar{A}_t \Vert_{\mathrm{Lip}_{\alpha}} <+\infty. 
$$
Let $t_{0}\in [0,t^{\ast}]$ and $r>0$ with $t_{0}+r<t^{\ast}$.  Let $\varepsilon >0$ be fixed. Let $W_{0}=\begin{pmatrix}
\varphi \\ 
\tau _{0}
\end{pmatrix}
\in \mathrm{Lip}_{\alpha}\times C_{+}(\Omega)$ satisfy
$$
\Vert \varphi -\bar{A}_{t_{0}}\Vert _{\mathrm{Lip}_{\alpha}}\leqslant \varepsilon \text{ and }\Vert \tau _{0}-\bar{\tau}(t_{0},.)\Vert _{\infty}\leqslant \varepsilon .
$$
Let $M>\varepsilon$ be fixed. Define the space
\begin{equation*}
E_{\varphi ,t_{0}}:=\{A\in BUC_{\alpha }((-\infty ,r],C(\Omega)): A_{0}=\varphi \text{ and }\sup_{t\in [0,r]}\Vert A(t,.)-\bar{A}_{t_{0}}(t,.)\Vert _{\infty}\leqslant M \} .
\end{equation*}
Let $\Phi :E_{\varphi ,t_{0}}\rightarrow C((-\infty ,r],C(\Omega))$ be the map defined by
\begin{equation} \label{EQ5.6}
\Phi (A)(t)(x):=\varphi(0,x)+\int _{0}^{t}F(A(l,.),\widehat{\tau}(A_{l},\delta _{0}),A(l-\widehat{\tau}(A_{l},\delta _{0}),.))(x)dl
\end{equation}
whenever $t \in [0,r] $ and $x\in\Omega$, and
$$
\Phi (A_{})(t)(x):=\bar{A}_{t_{0}}(t,x)
$$
whenever $t \leqslant 0$ and $x\in\Omega$. 

In the formula \eqref{EQ5.6} the delay $\widehat{\tau}(A_{t}, \delta _{0})(x)$ is the unique solution of  
$$
\int _{-\widehat{\tau}(A_{t},\delta _{0})(x)}^{0}f(A(t+s,.))(x)ds=\delta _{0}(x)
$$
where
$$
\delta _{0}(x):=\int _{-\tau_{0}(x)}^{0}f(\varphi(s,.))(x)ds. 
$$
For any $A\in E_{\varphi ,t_{0}}$, $t\in [0,r]$ and $x\in\Omega$, we have
\begin{eqnarray*}
& & \vert\Phi(A)(t)(x)-\bar{A}_{t_{0}}(t,x)\vert \\
& = & \left\vert \varphi(0,x)+\int _{0}^{t}F(A(l,.),\widehat{\tau}(A_{l},\delta _{0}),A(l-\widehat{\tau}(A_{l},\delta _{0}),.))(x)dl \right. \\
& & \left. - \bar{A}_{t_{0}}(0,x)-\int _{0}^{t} F(\bar{A}_{t_{0}}(l,.),\widehat{\tau}(\bar{A}_{t_{0}+l},\bar{\delta}_{0}),\bar{A}_{t_{0}}(l-\widehat{\tau}(\bar{A}_{t_{0}+l},\bar{\delta}_{0}),.))(x)dl \right\vert \\
& \leqslant & \varepsilon +\int _{0}^{t} \vert F(A(l,.),\widehat{\tau}(A_{l},\delta _{0}),A(l-\widehat{\tau}(A_{l},\delta _{0}),.))(x) \\
& & -F(\bar{A}_{t_{0}}(l,.),\widehat{\tau}(\bar{A}_{t_{0}+l},\bar{\delta}_{0}),\bar{A}_{t_{0}}(l-\widehat{\tau}(\bar{A}_{t_{0}+l},\bar{\delta}_{0}),.))(x) \vert dl,
\end{eqnarray*}
where
$$
\bar{\delta _{0}}(x)=\int _{-\bar{\tau}(t_{0},x)}^{0}f(\bar{A}(t_{0}+s,.))(x)ds=\int _{-\bar{\tau}_0(x)}^{0}f(\bar{\varphi}(s,.))(x)ds.
$$
By Lemma \ref{LE3.9}, we can find a constant $L_{\tau}>0$ such that 
$$
\Vert \widehat{\tau}(A_{l},\delta _{0})-\widehat{\tau}(\bar{A}_{t_{0}+l},\bar{\delta}_{0})\Vert _{\infty}
\leqslant L_{\tau}\left[\sup_{s\in [-\max\{\tau _{0}^{\infty},\bar{\tau}_{0}^{\infty}\},r]}\Vert A(s,.)-\bar{A}_{t_{0}}(s,.)\Vert _{\infty}+\Vert \delta _{0}-\bar{\delta}_{0}\Vert _{\infty}\right]
$$
where $\tau _{0}^{\infty}$ and $\bar{\tau}_{0}^{\infty}$ are defined as in Lemma \ref{LE3.8}.

By using the definition of $\delta _{0}$ and $\bar{\delta}_{0}$ we have
\begin{eqnarray*}
 \Vert\delta _{0}-\bar{\delta}_{0}\Vert _{\infty} & \leqslant & \left\Vert\int _{-\tau_{0}(x)}^{-\bar{\tau}(t_{0},x)}f(\varphi(s,.))(x)ds\right\Vert _{\infty} \\
&&+ \left\Vert\int _{-\bar{\tau}(t_{0},x)}^{0}[f(\varphi(s,.))(x)-f(\bar{A}(t_{0}+s,.))(x)]ds\right\Vert _{\infty} \\
& \leqslant & \Vert \tau_{0}(.)-\bar{\tau}(t_{0},.)\Vert _{\infty}\sup _{x\in\Omega}f(-\varphi_{\max})(x)\\
&&+\bar{\tau}_{0}^{\infty}\Vert f\Vert _{\mathrm{Lip}}\sup _{s\in [-\bar{\tau}_{0}^{\infty},0]}\Vert \varphi(s,.)-\bar{A}(t_{0}+s,.)\Vert _{\infty} \\
& \leqslant & \varepsilon \left(\sup\limits _{x\in\Omega}f(-\varphi _{\max})(x)+\bar{\tau}_{0}^{\infty}e^{\alpha\bar{\tau}_{0}^{\infty}}\Vert f\Vert _{\mathrm{Lip}}\right)
\end{eqnarray*}

where $\varphi _{\max}$ is defined as in Lemma \ref{LE3.8}.

From the above estimations, it follows that there exists a constant $M_{1}>0$ such that
$$
\Vert \widehat{\tau}(A_{l},\delta _{0})-\widehat{\tau}(\bar{A}_{t_{0}+l},\bar{\delta}_{0})\Vert _{\infty}\leqslant M_{1}.
$$
Now, similarly as in step 3 of the proof of Theorem \ref{TH4.4} to evaluate $\Vert A^{n+1}-A^{n}\Vert _{\infty}$, we deduce that there exists a constant $r_{1}>0$ such that for each $r\in (0,r_{1}]$, we have $\Phi(E_{\varphi ,t_{0}})\subset E_{\varphi ,t_{0}}$. \\
\textit{Step 2 (Lipschitz estimation):} Similarly as in step 2 of the proof of Theorem \ref{TH4.4}, we can deduce that there exists a constant $M_{L}>0$ such that
$$
\Vert \Phi (A)\Vert _{\mathrm{Lip}([0,r],C(\Omega))}\leqslant M_{L}, \forall A\in E_{\varphi ,t_{0}}.
$$
\textit{Step 3 (Iteration procedure):} Consider the sequence $\{A^{n}\}_{n\in\mathbb{N}}\subset E_{\varphi ,t_{0}}$ defined by iteration as follows: for each $(t,x)\in (-\infty ,r]\times\Omega$,
$$
A^{0}(t,x)= \bar{A}_{t_0}(t,x),
$$
and for each integer $n\geqslant 0$,
$$
A^{n+1}(t,x):=\left\{\begin{array}{l}
\Phi (A^{n})(t)(x),\text{ if }t\in [0,r], \\
\varphi (t,x),\text{ if }t\leqslant 0.
\end{array}\right.
$$
From step 2, we deduce that there exists a constant $\widehat{M}_{L}>0$ such that for each integer $n\geqslant 0$,
$$
\Vert A^{n}\Vert _{\mathrm{Lip}([-\tau _{0}^{\infty },r],C(\Omega))}\leqslant \widehat{M}_{L}.
$$
By suing the same argument as in step 3 of the proof of Theorem \ref{TH4.4}, we can find a constant $r_{2}\in (0,r_{1}]$ such that $\forall r\in (0,r_{2}]$,
\begin{equation*}
\Vert A^{n+1}-A^{n}\Vert _{\infty}\leqslant \frac{1}{2^{n}}\Vert A^{1}-A^{0}\Vert _{\infty},\forall n \geqslant 1.
\end{equation*}
It follows that $\{A^{n}\vert _{[0,r]}\}$ is a Cauchy sequence in the space $C([0,r],C(\Omega))$. Define
$$
A(t,x):=\left\{\begin{array}{l}
\lim\limits _{n\rightarrow +\infty}A^{n}(t,x),\text{ if }t\in [0,r], x\in\Omega , \\
\varphi (t,x),\text{ if }t\leqslant 0,x\in\Omega .
\end{array}\right.
$$
Then we have 
$$
\lim_{n \to + \infty} \left\Vert A^{n}-A\right\Vert _{\infty} =0,
$$
and we deduce that $(A,\widehat{\tau}(A_{l},\delta _{0}))$ is a solution of \eqref{EQ1.1} with the initial distribution $(\varphi ,\tau _{0})$. \\
\textit{Step 4 (Estimation of the solution):} As in step 4 of the proof of Theorem \ref{TH4.4}, we also have 
$$
\Vert A-A^{0}\Vert _{\infty}\leqslant 2\Vert A^{1}-A^{0}\Vert _{\infty}.
$$
Since we have
\begin{eqnarray*}
& & \Vert A^{1}-A^{0}\Vert _{\infty} \\
& = & \sup _{t\in [0,r]}\left\Vert \varphi (0,x)+\int _{0}^{t}F(A^{0}(l,.),\widehat{\tau}(A_{l}^{0},\delta _{0}),A^{0}(l-\widehat{\tau}(A_{l}^{0},\delta _{0}),.))(x)dl-\bar{A}_{t_{0}}(t,x)\right\Vert _{\infty} \\
& \leqslant & \sup _{t\in [0,r]}\Vert \varphi (0,.)-\bar{A}_{t_{0}}(0,.)\Vert _{\infty}\\
&& +\left\Vert\bar{A}_{t_{0}}(0,.)+\int _{0}^{t}F(A^{0}(l,.),\widehat{\tau}(A_{l}^{0},\delta _{0}),A^{0}(l-\widehat{\tau}(A_{l}^{0},\delta _{0}),.))(x)dl-\bar{A}_{t_{0}}(t,x)\right\Vert _{\infty}
\end{eqnarray*}
and since $\bar{A}(t,x)$ is a solution, the term in the above inequality is null, it follows that 
$$
\Vert A^{1}-A^{0}\Vert _{\infty} \leq \varepsilon . 
$$
Then by choosing $r \leq r_2$, we obtain 
$$
\Vert A(t,.)-\bar{A}_{t_{0}}(t,.)\Vert _{\infty}\leqslant 2 \varepsilon, \forall t \in [t_0, t_0+r]. 
$$
\textit{Step 5 (Convergence result):} Fix $r=\frac{t^\ast }{n} \leq r_2$ for some integer $n \geq 1$. Choose an initial value satisfying 
$$
\Vert \varphi -\bar{A}_{t_{0}}\Vert _{\mathrm{Lip}_{\alpha}}\leqslant \frac{\varepsilon}{2^{n+1}} \text{ and }\Vert \tau _{0}-\bar{\tau}(t_{0},.)\Vert _{\infty}\leqslant \frac{\varepsilon}{2^{n+1}} .
$$
By using the above prove result $t_0=0$, and  we decude that 
$$
\Vert A(t,.)-\bar{A}(t,.)\Vert _{\infty}\leqslant \frac{\varepsilon}{2^{n+1}}, \forall t \in [0,r]. 
$$
and by induction $t_0=kr$ for $k=0,...,n$ we obtain 
$$
\Vert A(t,.)-\bar{A}(t,.)\Vert _{\infty}\leqslant \frac{\varepsilon}{2^{n+1}}, \forall t \in [0,t^\ast],
$$
the result follows. 
\end{proof}

The part of of Theorem \ref{TH1.7}: time continuity of the semiflow in $BUC_{\alpha}^1$ is left to the reader.

\section{Application to the Forest model with space}

For $x \in \Omega := [0,1]$ and $t \geqslant 0$ we consider  
\begin{equation}\label{EQ6.1}
\partial_t A(t,x)= e^{-\mu_{J}\tau(t,x)}\dfrac{f(A(t,x))}{f(A(t-\tau(t,x),x))}B(t- \tau(t,x),x)-\mu_{A}A(t,x),
\end{equation} 
where the birth is defined by 
$$
B(t,x):=(I-\varepsilon \Delta)^{-1}[ \beta A(t,.)](x),
$$
where $\Delta$ is the Laplacian operator on the domain $\Omega$ with periodic boundary conditions, and the state-dependent delay satisfies  
$$
\displaystyle\int_{t-\tau(t,x)}^tf(A(\sigma,x))d\sigma=\displaystyle\int_{-\tau_0(x)}^0 f(\varphi (\sigma,x))d\sigma ,
$$
and $A(t,x)$ satisfies the initial condition
$$
A(t,x)=\varphi(t, x),  \forall t \leqslant 0,
$$
with 
$$
\varphi \in X_\alpha \text{ and } \varphi \geqslant 0. 
$$  
Then it is well known that $(I-\varepsilon \Delta)^{-1}$ is a positive operator, i.e. 
$$
(I-\varepsilon \Delta)^{-1} C_+(\Omega ) \subset C_+(\Omega ) 
$$
and 
$$
\Vert (I-\varepsilon \Delta)^{-1} \Vert_{\mathcal{L}(C(\Omega ))}=1.
$$

\subsection{Positivity}

Assume that 
$$
\varphi(t,x) \geqslant 0, \forall (t,x) \in (-\infty,0] \times \Omega.  
$$
Assume that the solution starting from this initial distribution exists up to the time $ T_{BU}>0$. Then we have for each $t \in [0, T_{BU})$,
\begin{equation*}
\begin{split}
A(t,x)= & e^{-\mu_A t} \varphi(0,x)+ \int_0^t  e^{-\mu_A (t-s)} e^{-\mu_{J}\tau(s,x)}\dfrac{f(A(s,x))}{f(A(s-\tau(s,x),x))}\cdot \\
& (I-\varepsilon \Delta)^{-1}[\beta A(s- \tau(s,x),.)](x) ds.
\end{split}
\end{equation*}
Since the operator $(I-\varepsilon \Delta)^{-1}$ preserves the positivity of the distribution and by Assumption \ref{ASS1.1}, $f$ is strictly positive, then the positivity of the solution follows by using fixed point arguments on $A(t,x)$ in the above integral equation. 

\subsection{Global existence}

Consider the number of juveniles
$$
J(t,x):=\int_{t-\tau(t,x)}^t e^{-\mu_J (t-s)} \beta (I-\varepsilon \Delta)^{-1}(A(s,.))(x)ds
$$
for each $t \in [0, T_{BU})$. It is clear that 
\begin{equation} \label{EQ6.2}
J(t,x) \geqslant 0, \forall t \in [0, T_{BU}).
\end{equation}
Moreover we have 
\begin{equation*}
\begin{split}
\partial_t J(t,x)= & \beta (I-\varepsilon \Delta)^{-1}(A(t,.))(x)- e^{-\mu_{J}\tau(t,x)}\dfrac{f(A(t,x))}{f(A(t-\tau(t,x),x))} \cdot \\
& \beta (I-\varepsilon \Delta)^{-1}(A(t- \tau(t,x),.))(x) -\mu_J J(t,x).
\end{split}
\end{equation*}
By summing equation \eqref{EQ6.1} and the above equation we obtain 
\begin{equation} \label{EQ6.3}
\partial_t [A(t,x)+J(t,x)]=\beta (I-\varepsilon \Delta)^{-1}(A(t,.))(x)-\mu_A A(t,x)-\mu_J J(t,x).
\end{equation}
Set
$$
u(t,x):=A(t,x)+J(t,x),
$$
then we have 
$$
\partial_t u(t,x) \leqslant \beta (I-\varepsilon \Delta)^{-1}(u(t,.))(x)-\mu u(t,x),
$$
where $\mu:= \min\{\mu_A, \mu_J\}$. By using comparison argument we deduce that 
$$
u(t,x) \leqslant e^{[\beta (I-\varepsilon \Delta)^{-1}-\mu]t} (u(0,.))(x), \forall t \in [0, T_{BU}).
$$
Therefore by using \eqref{EQ6.2}, we deduce that 
$$
A(t,x) \leqslant e^{[\beta (I-\varepsilon \Delta)^{-1}-\mu]t} (u(0,.))(x), \forall t \in [0, T_{BU}),
$$
and by using Theorem \ref{TH1.6}, we must have $T_{BU}=+\infty$.

\end{document}